\newcommand{\ifarticle}[2]{
    \csname@ifclassloaded\endcsname{beamer}{#2}{#1}
}
        \setlist{topsep=2pt,itemsep=2pt,partopsep=2pt,parsep=2pt} 
        \xpretocmd{\@adminfootnotes}{\let\@makefntext\BHFN@OldMakefntext}{}{}
        \renewcommand\@makefntext[1]{%
        \@ifundefined{@makefnmark}
            {}
            {%
            \renewcommand\@makefnmark{%
            \mbox{%
                \textsuperscript{%
                \normalfont
                \hyperref[\BackrefFootnoteTag]{\@thefnmark}%
                }%
            }\,%
            }%
            \BHFN@OldMakefntext{#1}%
        }%
        }
        \LetLtxMacro{\BHFN@Old@footnotemark}{\@footnotemark}
        \renewcommand*{\@footnotemark}{%
            \refstepcounter{BackrefHyperFootnoteCounter}%
            \xdef\BackrefFootnoteTag{bhfn:\theBackrefHyperFootnoteCounter}%
            \label{\BackrefFootnoteTag}%
            \BHFN@Old@footnotemark
        }
        \def\paragraph{\@startsection{paragraph}{4}%
          \z@\z@{-\fontdimen2\font}%
          {\normalfont\bfseries}}
        \theoremstyle{plain}
        \newtheorem{theorem}{Theorem}[section]
        \newtheorem{proposition}[theorem]{Proposition}
        \newtheorem{lemma}[theorem]{Lemma}
        \newtheorem{corollary}[theorem]{Corollary}
        \newtheorem*{theorem*}{Theorem}
        \theoremstyle{definition}
        \newtheorem{definition}[theorem]{Definition}
        \newtheorem{example}[theorem]{Example}
        \newtheorem{remark}[theorem]{Remark}
        \newenvironment{sketch}{\proof}{\endproof}
        \Crefname{theoremenumi}{Theorem}{Theorems}
            \setlist[enumerate,1]{
                ref={\csname thetheorem\endcsname.(\arabic*)}
            }%
            \setlist[enumerate,2]{
                ref={\thetheorem.(\arabic*).(\alph*)}
            }%
        \Crefname{propositionenumi}{Proposition}{Propositions}
            \setlist[enumerate,1]{
                ref={\csname theproposition\endcsname.(\arabic*)}
            }%
            \setlist[enumerate,2]{
                ref={\theproposition.(\arabic*).(\alph*)}
            }%
        \Crefname{lemmaenumi}{Lemma}{Lemmas}
            \setlist[enumerate,1]{
                ref={\csname thelemma\endcsname.(\arabic*)}
            }%
            \setlist[enumerate,2]{
                ref={\thelemma.(\arabic*).(\alph*)}
            }%
        \Crefname{corollaryenumi}{Corollary}{Corollaries}
            \setlist[enumerate,1]{
                ref={\csname thecorollary\endcsname.(\arabic*)}
            }%
            \setlist[enumerate,2]{
                ref={\thecorollary.(\arabic*).(\alph*)}
            }%
        \Crefname{definitionenumi}{Definition}{Definitions}
            \setlist[enumerate,1]{
                ref={\csname thedefinition\endcsname.(\arabic*)}
            }%
            \setlist[enumerate,2]{
                ref={\thedefinition.(\arabic*).(\alph*)}
            }%
        \Crefname{exampleenumi}{Example}{Examples}
            \setlist[enumerate,1]{
                ref={\csname theexample\endcsname.(\arabic*)}
            }%
            \setlist[enumerate,2]{
                ref={\theexample.(\arabic*).(\alph*)}
            }%
        \AtBeginEnvironment{\env}{%
          \pushQED{\qed}%
        }
        \AtEndEnvironment{\env}{\popQED\endexample}
    \DeclareDocumentCommand{\mathcommand}{mO{0}m}{%
    \expandafter\let\csname old\string#1\endcsname=#1
    \expandafter\newcommand\csname new\string#1\endcsname[#2]{#3}
    \DeclareRobustCommand#1{%
        \ifmmode
        \expandafter\let\expandafter\next\csname new\string#1\endcsname
        \else
        \expandafter\let\expandafter\next\csname old\string#1\endcsname
        \fi
        \next
    }%
    }
    \mathcommand{\h}{\textup{-}}
    \newcommand{\tx}{\mathrm}
    \mathcommand{\b}{\mathbf}
    \newcommand{\cl}{\mathcal}
    \mathcommand{\bb}{\mathbb}
    \DeclareMathAlphabet{\bbn}{U}{bbold}{m}{n}
    \newcommand{\dc}[1]{{\b{\bb#1}}}
    \mathcommand{\sf}{\mathsf}
    \mathcommand{\u}{\underline}
    \newcommand{\TODO}[1][TODO]{\textcolor{orange}{\textup{#1}}\xspace}
    \newcommand{\flip}[1]{\text{\rotatebox[origin=c]{-180}{$#1$}}}
    \newcommand{\datetoday}{\date{\cleanlookdateon\today}}
    \newcommand{\defeq}{\mathrel{:=}}
    \mathcommand{\d}{\mathbin{;}}
    \mathcommand{\c}{\circ}
    \newcommand{\ph}[1][]{{({-}_{#1})}}
    \newcommand{\iso}{\cong}
    \renewcommand{\equiv}{\simeq}
    \newcommand{\tto}{\Rightarrow}
    \newcommand{\xtto}{\xRightarrow}
    \newcommand{\ffto}{\hookrightarrow}
    \newcommand*\cocolon{%
            \nobreak
            \mskip6mu plus1mu
            \mathpunct{}%
            \nonscript
            \mkern-\thinmuskip
            {:}%
            \mskip2mu
            \relax
    }
    \def\slashedarrowfill@#1#2#3#4#5{%
    $\m@th\thickmuskip0mu\medmuskip\thickmuskip\thinmuskip\thickmuskip
    \relax#5#1\mkern-7mu%
    \cleaders\hbox{$#5\mkern-2mu#2\mkern-2mu$}\hfill
    \mathclap{#3}\mathclap{#2}%
    \cleaders\hbox{$#5\mkern-2mu#2\mkern-2mu$}\hfill
    \mkern-7mu#4$%
    }
    \def\rightslashedarrowfill@{%
    \slashedarrowfill@\relbar\relbar\mapstochar\rightarrow}
    \newcommand\xslashedrightarrow[2][]{%
    \ext@arrow 0055{\rightslashedarrowfill@}{#1}{#2}}
    \def\leftslashedarrowfill@{%
    \slashedarrowfill@\leftarrow\relbar\mapsfromchar\relbar}
    \newcommand\xslashedleftarrow[2][]{%
    \ext@arrow 0055{\leftslashedarrowfill@}{#1}{#2}}
    \newcommand{\xlto}{\xslashedrightarrow}
    \newcommand{\lto}{\xlto{}}
    \newcommand{\op}{{}^\tx{op}}
    \newcommand{\co}{{}^\tx{co}}
    \newcommand{\tp}[1]{\langle#1\rangle}
    \newcommand{\unit}{{\tp{}}}
    \newcommand{\rx}{\mathbin{\blacktriangleright}}
    \newcommand{\plx}{\mathbin{{\rhd}\mathclap{\mspace{-17.5mu}\cdot}}}
    \newcommand{\adj}{\dashv}
    \newcommand{\radj}[1]{\mathrel{\adj_{#1}}}
    \newcommand{\ob}[1]{|#1|}
    \DeclareFontFamily{U}{min}{}
    \DeclareFontShape{U}{min}{m}{n}{<-> udmj30}{}
    \newcommand{\yo}{\!\text{\usefont{U}{min}{m}{n}\symbol{'210}}\!}
    \mathcommand{\comma}{\downarrow}
    \newcommand{\copi}{\flip\pi}
    \newsavebox{\whitecircstar}\sbox{\whitecircstar}{\kern.075em\tikz{\node[draw, circle,line width=.36pt, inner sep=0]{$*$};}\kern.075em}
    \newcommand{\ostar}{\mathbin{\scalerel*{\usebox{\whitecircstar}}{\odot}}}
    \newsavebox{\blackcircstar}\sbox{\blackcircstar}{\kern.075em\tikz{\node[fill, circle, line width=.36pt, inner sep=0, text=white]{$*$};}\kern.075em}
    \newcommand{\bulletstar}{\mathbin{\scalerel*{\usebox{\blackcircstar}}{\odot}}}
    \def\widebreve{\mathpalette\wide@breve}
    \def\wide@breve#1#2{\sbox\z@{$#1#2$}%
         \mathop{\vbox{\m@th\ialign{##\crcr
    \kern0.08em\brevefill#1{0.8\wd\z@}\crcr\noalign{\nointerlineskip}%
                        $\hss#1#2\hss$\crcr}}}\limits}
    \def\brevefill#1#2{$\m@th\sbox\tw@{$#1($}%
      \hss\resizebox{#2}{\wd\tw@}{\rotatebox[origin=c]{90}{\upshape(}}\hss$}
    \NewDocumentCommand{\jrule}{om}{%
        \IfNoValueTF{#1}
            {\textsc{#2}}
            {$#1$-\textsc{#2}}%
    }
    \newcommand{\N}{{\bb N}}
    \newcommand{\Set}{{\b{Set}}}
    \newcommand{\FinSet}{{\b{FinSet}}}
    \newcommand{\V}{{\bb V}} 
    \newcommand{\Cat}{\b{Cat}}
    \newcommand{\CAT}{\b{CAT}}
    \newcommand{\VCat}{\V\h\Cat}
    \newcommand{\RMnd}{\b{RMnd}}
    \newcommand{\Alg}{\b{Alg}}
    \newcommand{\ff}{fully faithful}
    \newcommand{\ioo}{identity-on-objects}
    \newcommand{\EM}{Eilenberg--Moore}
    \newcommand{\eg}{e.g.\@\xspace}
    \newcommand{\ie}{i.e.\@\xspace}
    \newcommand{\cf}{cf.\@\xspace}
    \NewDocumentCommand{\etc}{t.}{etc.\@\xspace}
    \NewDocumentCommand{\ibid}{t.}{ibid.\@\xspace}
    \NewDocumentCommand{\loccit}{t.}{loc.\ cit.\@\xspace}
    \newcommand{\vd}{virtual double}
    \newcommand{\vdc}{\vd{} category}
    \newcommand{\vdcs}{\vd{} categories}
    \newcommand{\ve}{virtual equipment}
\patchcmd{\beamer@sectionintoc}{\vfill}{\vskip\itemsep}{}{}
  \colorlet{colour-bg}{black!85} 
  \definecolor{colour-primary}{HTML}{cc80ff} 
  \colorlet{colour-text}{black!10} 
  \colorlet{colour-subtle}{black!40} 
  \colorlet{colour-block-bg}{black!80} 
  \definecolor{colour-warning-bg}{HTML}{ffea80} 
  \definecolor{colour-warning-primary}{HTML}{e08152} 
  \apptocmd{\frame}{}{\justifying}{}
  \newtheorem{proposition}[theorem]{\translate{Proposition}}
  \newtheorem{sketch}[theorem]{\translate{Proof (sketch)}}
  \renewenvironment<>{block}[1]{%
      \begin{actionenv}#2%
        \par%
        \usebeamertemplate{block begin}}
      {\par%
        \usebeamertemplate{block end}%
      \end{actionenv}}
  \renewenvironment<>{exampleblock}[1]{%
      \begin{actionenv}#2%
          \par%
          \only<presentation>{
            \setbeamercolor{local structure}{parent=example text}}%
          \usebeamertemplate{block example begin}}
        {\par%
          \usebeamertemplate{block example end}%
        \end{actionenv}}
\newcommand{\X}{\bb X}
\newcommand{\tX}{\u\X}
\renewcommand{\Cat}{\dc{Cat}}
\newcommand{\K}{\cl K}
\mathcommand{\L}{\textup{\sagittarius}}
\newcommand{\jAE}{j \colon A \to E}
\newcommand{\jEI}{j' \colon E \to I}
\newcommand{\jadj}{\radj j}
\newcommand{\ljr}{\ell \jadj r}
\newcommand{\ljrp}{\ell' \jadj r'}
\newcommand{\aop}{{\rtimes}}
\newcommand{\wc}{\ostar}
\newcommand{\colim}{\operatorname{colim}}
\newcommand{\wl}{\bulletstar}
\renewcommand{\lim}{\operatorname{lim}}
\renewcommand{\defeq}{\mathrel{:=}}
\newcommand{\odotl}{\mathbin{{\odot}_L}}
\renewcommand{\EM}{\relax\ifmmode\b{EM}\else{}Eilenberg\nobreakdash--Moore\fi}
\mathcommand{\P}{\cl P}
\newcounter{eqstep}
\newcounter{eqsubstep}[eqstep]
\newcommand{\nexteqstep}{\stepcounter{eqstep}}
\newcommand{\eqstepref}[1]{(\hyperlink{eqstep:#1}{#1})}
\newcommand{\raisedtarget}[1]{\Hy@raisedlink{\hypertarget{#1}{}}}
\newcommand{\tangleeq}{\refstepcounter{eqsubstep}\raisedtarget{eqstep:\theeqstep.\theeqsubstep}{\mathclap{\overset{(\theeqstep.\theeqsubstep)}{=}}}}
\newcommand\tangleeq*{\mathclap{=}}
\newcommand{\tangleeql}{\refstepcounter{eqsubstep}\raisedtarget{eqstep:\theeqstep.\theeqsubstep}{\mathllap{\overset{(\theeqstep.\theeqsubstep)}{=}}}}
\newcommand\tangleeql*{\mathllap{=}}
\newenvironment{tangleeqs}{%
    \mathcommand{\=}{\tangleeq}
    \global\let\externaldblbackslash\\
    \csname gather*\endcsname
    \ifundef{\internaldblbackslash}{%
        \global\let\internaldblbackslash\\%
        \gdef\\{\internaldblbackslash\tangleeql}%
    }{}
    \nexteqstep
}{%
    \csname endgather*\endcsname
    \global\undef\internaldblbackslash
    \global\let\\\externaldblbackslash
}
\newenvironment{tangleeqs*}{%
    \mathcommand{\=}{\tangleeq*}
    \global\let\externaldblbackslash\\
    \csname gather*\endcsname
    \ifundef{\internaldblbackslash}{%
        \global\let\internaldblbackslash\\%
        \gdef\\{\internaldblbackslash\tangleeql*}%
    }{}
}{%
    \csname endgather*\endcsname
    \global\undef\internaldblbackslash
    \global\let\\\externaldblbackslash
}
\newcommand{\Cart}{\b{Cart}}
\newcommand{\F}{{\bb F}}
\newcommand{\Met}{\b{Met}}
\newcommand{\FinMet}{\b{FinMet}}
\newcommand{\Semigrp}{\b{Semigrp}}
\newcommand{\monprod}{\star}
\theoremstyle{plain}
\newenvironment{manualtheorem}[1]{%
  \manualtheoreminner
}{\endmanualtheoreminner}
\newenvironment{manualproposition}[1]{%
  \manualpropositioninner
}{\endmanualpropositioninner}
\newenvironment{manuallemma}[1]{%
  \manuallemmainner
}{\endmanuallemmainner}
\newenvironment{manualcorollary}[1]{%
  \manualcorollaryinner
}{\endmanualcorollaryinner}
\theoremstyle{definition}
\newenvironment{manualdefinition}[1]{%
  \manualdefinitioninner
}{\endmanualdefinitioninner}
\AtBeginEnvironment{\env}{%
    \pushQED{\qed}%
}
\AtEndEnvironment{\env}{\popQED\endexample}
\title{Relative monadicity}
\author{Nathanael Arkor}
\address{Department of Mathematics and Statistics, Faculty of Science, Masaryk University, Czech Republic}
\author{Dylan McDermott}
\address{Department of Computer Science, Reykjavik University, Iceland}
\keywords{Relative monad, relative adjunction, monadicity, virtual double category, virtual equipment, formal category theory, enriched category theory}
\subjclass[2020]{18D70,18D65,18C15,18C20,18A40,18C10,18D20,18N10}
\begin{document}

\begin{abstract}
    We establish a relative monadicity theorem for relative monads with dense roots in a virtual equipment, specialising to a relative monadicity theorem for enriched relative monads. In particular, for a dense $\V$-functor $\jAE$, a $\V$-functor $r \colon D \to E$ is $j$-monadic if and only if $r$ admits a left $j$-relative adjoint and creates $j$-absolute colimits. This provides a refinement of the classical monadicity theorem -- characterising those categories whose objects are given by those of $E$ equipped with algebraic structure -- in which the arities of the algebraic operations are valued in $A$. In particular, when $j = 1$, we recover a formal monadicity theorem. Furthermore, we examine the interaction between the pasting law for relative adjunctions and relative monadicity. As a consequence, we derive necessary and sufficient conditions for the ($j$-relative) monadicity of the composite of a $\V$-functor with a ($j$-relatively) monadic $\V$-functor.
\end{abstract}

\maketitle

\setcounter{tocdepth}{1}
\tableofcontents

\section{Introduction}

The concept of \emph{monadicity} is fundamental in category theory. A functor $r \colon D \to E$ is said to be monadic if it exhibits $D$ as the category of algebras for a monad on $E$. Conceptually, this permits the identification of the objects of $D$ with objects of $E$ equipped with algebraic structure -- often expressible concretely in terms of operations and equations -- and the morphisms of $D$ as morphisms of $E$ that preserve the algebraic structure. Having such a description of $D$ is useful as it facilitates the application of monad theory, for instance in providing techniques to lift structure from $E$ to $D$.

Given that we are interested in the monadicity of functors, it is important to have methods for establishing monadicity. A \emph{monadicity theorem} provides sufficient (and often also necessary) conditions for a functor to be monadic~\cite{beck1966untitled,pare1971absolute}. Monadicity theorems are particularly convenient tools in two kinds of situations. First, in concrete examples, it can be simpler to verify a set of monadicity conditions for $r$ than to directly construct an equivalence between $D$ and the category of algebras for an induced monad, particularly when the categories in question are complex. Second, in abstract examples, there may not be an amenable description of the category of algebras for the induced monad. This is typically the case, for example, when the existence of a left adjoint to $r$ has been deduced abstractly via an adjoint functor theorem, rather than by giving a concrete description of the left adjoint.

In many situations we are concerned not only with whether the objects of $D$ may be identified with objects of $E$ equipped with algebraic structure, but also with the nature of that algebraic structure. An algebra for a monad may be viewed as an object equipped with operations of arbitrary arity, along with equations imposing laws that the operations must satisfy. In practice, we frequently wish to restrict attention not to arbitrary algebras, but only to those that may be axiomatised in terms of operations with particular arity. For instance, in universal algebra, it is typical to consider specifically those algebras that admit a description in terms of finitary operations~\cite{birkhoff1935structure}. From a concrete perspective, this can be desirable because such algebras admit a tractable syntactic description; from an abstract perspective, categories of such algebras typically have stronger categorical properties. In these situations, the classical notion of monadicity proves too coarse, as monads make no distinction between the possible arities of operations for algebras and the possible carriers for algebras.

A \emph{relative monad} is a refinement of the notion of monad that relaxes the requirement that the monad's underlying functor be an endofunctor~\cite{altenkirch2010monads,altenkirch2015monads}. A monad relative to a functor $\jAE$ may be viewed as a generalisation of the notion of monad in which the possible arities of operations for algebras are valued in $A$, while the possible carriers for algebras are valued in $E$. Just as for ordinary monads, every $j$-relative monad has a category of algebras, equipped with free and forgetful functors $f_T \colon A \to \Alg(T)$ and $u_T \colon \Alg(T) \to E$ that together form a $j$-relative adjunction $f_T \jadj u_T$, \ie a family of bijections $\Alg(T)(f_T x, y) \iso E(j x, u_T y)$ natural in $x$ and $y$~\cite{altenkirch2015monads,ulmer1968properties}.

A functor $r \colon D \to E$ is said to be \emph{$j$-relatively monadic} (or simply \emph{$j$-monadic}) if it exhibits $D$ as the category of algebras for a $j$-relative monad. Conceptually, this permits the identification of the objects of $D$ with objects of $E$ equipped with algebraic structure whose arities are valued, via $j$, in the category $A$. For instance, if $\jAE$ is the inclusion of a full subcategory of finite objects, a $j$-monadic functor $r$ exhibits $D$ as a category of algebras whose operations are finitary. Relative monadicity thus provides the sought-after refinement of monadicity; it is indeed a refinement since, when $j$ is the identity functor on $E$, relative monadicity is simply ordinary monadicity. It is consequently desirable to establish \emph{relative monadicity theorems}, extending the classical monadicity theorems. This is the motivation for the present paper.

In this paper, we establish two relative monadicity theorems. The first (\cref{relative-monadicity-theorem}) is a characterisation in the style of \textcite{beck1966untitled} and \textcite{pare1971absolute}, establishing that, for $r$ to be $j$-monadic, it is necessary and sufficient that $r$ admits a left $j$-relative adjoint and creates certain colimits. The second (\cref{relative-monadicity-of-composite}) is a pasting law for relatively monadic functors, which may be seen as analogous to the pasting law for pullbacks, characterising the relative monadicity of one functor in terms of the relative monadicity of another.

\subsection{Relative monadicity via creation of colimits}
\label{relative-monadicity-in-terms-of-colimits}

For $j$ the identity functor on a category $E$, a $j$-relative monad is a (non-relative) monad, and there is a well-known characterisation of $j$-monadicity: a functor $r \colon D \to E$ is monadic if and only if it admits a left adjoint and creates coequalisers of $r$-contractible pairs~\cite{beck1966untitled}; or, equivalently, if and only if it admits a left adjoint and creates absolute colimits~\cite{pare1971absolute}.

For more general functors $j$, there are also known characterisations of $j$-monadicity. However, perhaps surprisingly, these characterisations predate the modern notion of relative monad by some thirty-five years. \textcite{diers1975jmonades,diers1976foncteur} and \textcite{lee1977relative} independently established that a functor $r \colon D \to E$ is monadic relative to a dense and \ff{} functor $\jAE$ if and only if it admits a left $j$-relative adjoint and creates $j$-absolute colimits\footnotemark{}. Due to the relative obscurity of the sources, and because the notions of relative monad studied by \citeauthor{diers1975jmonades} and \citeauthor{lee1977relative} appear quite different to that of \textcite{altenkirch2010monads} (\cf{}~\cite[Example~8.14]{arkor2024formal}), these relative monadicity theorems have till now been overlooked.
\footnotetext{A colimit in $E$ is \emph{$j$-absolute} when it is preserved by the nerve functor $E(j {-}_2, {-}_1) \colon E \to [A\op, \Set]$.}

While the characterisations of \citeauthor{diers1975jmonades} and \citeauthor{lee1977relative} are useful, there are two respects in which one might hope for greater generality. First, we should like to drop the assumption that the root $j$ be \ff{}. Second, we should like a similar characterisation in the setting of enriched category theory, extending that for enriched monadicity~\cite{bunge1969relative,dubuc1970kan}.

The purpose of \cref{monadicity} is to establish such a characterisation. However, our approach is more general still. It is the philosophy of formal category theory that the various flavours of category theory -- such as ordinary category theory, enriched category theory, internal category theory, and so on -- should all be viewed as instances of a general theory. That is, rather than establish the fundamental theorems of category theory separately in each setting, it is valuable to instead work in a framework in which such theorems may be proven just once, and then specialised to each setting. This was the approach of \cite{arkor2024formal}, in which we laid the foundations for a formal theory of relative monads. Herein, we follow the same approach, working in the context of a virtual equipment in the sense of \textcite{cruttwell2010unified}. Thus, we shall first establish a relative monadicity theorem in the context of a virtual equipment, from which an enriched relative monadicity theorem will follow as a special case (\cf{}~\cite[\S8]{arkor2024formal}). However, we are able to make some simplifications for enrichment in well-behaved monoidal categories, which we describe in \cref{relative-monadicity-in-VCat}.

\subsection{Relative monadicity via composition}

Given a monadic functor $r' \colon D \to E$, it is often useful to establish when precomposing some functor $r \colon C \to D$ produces another monadic functor $(r \d r') \colon C \to E$. It is neither necessary nor sufficient that $r$ be monadic: $(r \d r')$ may admit a left adjoint even when $r$ does not; and $(r \d r')$ may not create the appropriate colimits even when $r$ does. To understand the relationship between $r$, $r'$, and $(r \d r')$, it is enlightening to consider the generalisation to the relative setting. If $r'$ is $j$-monadic for some functor $\jAE$, we may ask when $(r \d r')$ is also $j$-monadic. However, it makes no sense to ask whether $r$, too, is $j$-monadic, because $r$ and $j$ have different codomains. Instead, it is most natural to ask whether $r$ is monadic relative to $\ell' \colon A \to D$, the left $j$-relative adjoint of $r'$.
\[\begin{tikzcd}[sep=small]
	&& C \\
	&&& D \\
	A &&&& E
	\arrow[""{name=0, anchor=center, inner sep=0}, "r", from=1-3, to=2-4]
	\arrow["j"', from=3-1, to=3-5]
	\arrow[""{name=1, anchor=center, inner sep=0}, "{r'}", from=2-4, to=3-5]
	\arrow[""{name=2, anchor=center, inner sep=0}, "{\ell'}"{description}, from=3-1, to=2-4]
	\arrow[""{name=3, anchor=center, inner sep=0}, "\ell", from=3-1, to=1-3]
	\arrow["\dashv"{anchor=center}, shift right=1, draw=none, from=2, to=1]
	\arrow["\dashv"{anchor=center, rotate=18}, shift right=2, draw=none, from=3, to=0]
\end{tikzcd}\]
In fact, as we prove in \cref{composition-and-monadicity}, this is necessary and sufficient to ensure the $j$-monadicity of the composite $(r \d r')$. This observation may be viewed as a pasting law for relatively monadic adjunctions: supposing that the triangle on the right is relatively monadic, the triangle on the left is relatively monadic if and only if the outer triangle is relatively monadic. This characterisation has a couple of advantages over the characterisation in terms of colimit creation: for one, it does not require that the roots $j$ or $\ell'$ be dense, and the existence of a category of algebras can be derived rather than assumed\footnotemark{}. Furthermore, this characterisation appears to be new even for $j = 1$, though we note that a similar observation appears in the work of \textcite[Theorem~1.5.5]{walters1970categorical}, of which our result may be seen as a refinement.
\footnotetext{Recall that the existence of a $\V$-category of algebras for a $\V$-enriched (relative) monad requires the existence of certain limits in $\V$ (\cf{}~\cite[Corollary~8.20]{arkor2024formal}), and so is a nontrivial assumption for general $\V$.}

\subsection{Outline of the paper}

In \cref{prerequisites}, we briefly recall the formal categorical concepts from \cite{arkor2024formal} that will be fundamental to our study of relative monadicity. In \cref{colimits}, we introduce the notions of creation of limits and of colimits in a virtual equipment, and show that the forgetful tight-cell $u_T \colon \Alg(T) \to E$ from the algebra object for a $j$-relative monad creates limits and $j$-absolute colimits (\cref{u_T-creates-j-absolute-colimits,u_T-creates-limits,u_T-nonstrict-creation}). In \cref{monadicity}, we prove a formal relative monadicity theorem (\hyperref[relative-monadicity-theorem]{Theorems~\ref*{relative-monadicity-theorem}} and \hyperref[relative-monadicity-theorem']{\ref*{relative-monadicity-theorem'}}) and, as a consequence, show that the category of algebras for a relative monad $T$ is the category of algebras for a monad if and only if the forgetful tight-cell $u_T \colon \Alg(T) \to E$ admits a left adjoint (\cref{monadic-iff-left-adjoint}). In \cref{relative-monadicity-in-VCat}, we specialise the relative monadicity theorem to $\VCat$ and $\VCat\co$ for a well-behaved monoidal category $\V$, obtaining enriched relative monadicity and relative comonadicity theorems (\hyperref[enriched-relative-monadicity-theorem]{Theorems~\ref*{enriched-relative-monadicity-theorem}} and \hyperref[enriched-relative-comonadicity-theorem]{\ref*{enriched-relative-comonadicity-theorem}}). To demonstrate the application of the relative monadicity theorem, we use it to prove that (1) the category of algebras for a finitary algebraic theory is (relatively) monadic over $\Set$ (\cref{algebraic-theories-induce-monads}); (2) that the category of algebras for a colimit-preserving monad on a free cocompletion is relatively monadic over the cocompletion (\cref{cocontinuous-monads}); and (3) that the category of algebras for a quantitative equational theory in the sense of \textcite{mardare2016quantitative} is (relatively) monadic over $\Met$ (\cref{metric-spaces}). More generally, we show that the category of algebras for a $j$-theory in the sense of \textcite{lucyshyn2023enriched} is $j$-monadic (\cref{j-theory}). Finally, in \cref{composition-and-monadicity}, we consider the interaction between the pasting law for relative adjunctions (\cite[Proposition~5.30]{arkor2024formal}) and relative monadicity, and thereby deduce necessary and sufficient conditions for the composite of a functor with a relatively monadic functor to itself be relatively monadic (\cref{relative-monadicity-of-composite}).

\begin{remark}
    In addition to \citeauthor{diers1975jmonades} and \citeauthor{lee1977relative}, \citeauthor{walters1970categorical} also established a series of relative monadicity theorems for unenriched relative monads with \ff{} roots~\cite[\S2.3]{walters1970categorical}. However, \citeauthor{walters1970categorical} imposes a number of additional assumptions on the functor $r \colon D \to E$ that do not hold in general, and which are not appropriate for our purposes. We shall not pursue an explicit comparison to the theorems of \citeauthor{walters1970categorical} in this paper.
\end{remark}

\begin{remark}
    After presenting an early version of this work to the Masaryk University Algebra Seminar, the authors were informed that John Bourke, Marcelo Fiore, and Richard Garner have, in unpublished joint work, independently established an enriched relative monadicity theorem.
\end{remark}

\subsection{Acknowledgements}

The authors thank John Bourke for helpful comments, and thank the anonymous reviewer for their suggestions, which have improved the exposition of the paper.
The second author was supported by Icelandic Research Fund grant \textnumero{}\,228684-052.

\section{Formal categorical prerequisites}
\label{prerequisites}

We shall work throughout the paper in the context of a \ve{} $\X$, adopting the terminology and notation of \cite{arkor2024formal}.
In this section, we shall briefly recall the fundamental concepts of formal category theory in a \ve{}.
A more detailed introduction may be found in \S2~--~\S3 of \cite{arkor2024formal}.
The reader interested primarily in the applications of the relative monadicity theorem is encouraged to skip to \cref{relative-monadicity-in-VCat}, where we shall instantiate the formal theory in the context of the \ve{} $\VCat$ of categories enriched in a monoidal category $\V$, and which may be read without familiarity with formal category theory.

A \ve{} is, in particular, a \vdc{}, which is a two-dimensional structure having \emph{objects}; \emph{tight-cells} ($\to$); \emph{loose-cells} ($\lto$); and \emph{2-cells} ($\tto$) of the following form (\cf~\cite[Definition~2.1]{arkor2024formal}).
\[\begin{tikzcd}
    {A_0} & {A_1} & \cdots & {A_{n - 1}} & {A_n} \\
    {B_0} &&&& {B_n}
    \arrow["q", "\shortmid"{marking}, from=2-5, to=2-1]
    \arrow["{p_n}"', "\shortmid"{marking}, from=1-5, to=1-4]
    \arrow["{p_1}"', "\shortmid"{marking}, from=1-2, to=1-1]
    \arrow["{p_{n - 1}}"', "\shortmid"{marking}, from=1-4, to=1-3]
    \arrow["{p_2}"', "\shortmid"{marking}, from=1-3, to=1-2]
    \arrow[""{name=0, anchor=center, inner sep=0}, "{f_n}", from=1-5, to=2-5]
    \arrow[""{name=1, anchor=center, inner sep=0}, "{f_0}"', from=1-1, to=2-1]
    \arrow["\phi"{description}, draw=none, from=0, to=1]
\end{tikzcd}\]
For instance, the \vdc{} $\VCat$ has as objects the (possibly large) $\V$-categories; as tight-cells the $\V$-functors; as loose-cells the $\V$-distributors; and as 2-cells the $\V$-natural transformations, which are families of morphisms
\[\phi_{x_0, \ldots, x_n} \colon p_1(x_0, x_1) \otimes \cdots \otimes p_n(x_{n - 1}, x_n) \to q(\ob{f_0} x_0, \ob{f_n} x_n)\]
in $\V$ for each $x_0 \in \ob{A_0}, \dots, x_n \in \ob{A_n}$, satisfying several naturality laws~\cite[Definition~8.1]{arkor2024formal}. For tight-cells $f \colon A \to B$ and $g \colon B \to C$, we denote by $(f \d g) \colon A \to C$ or $g f \colon A \to C$ their composite. For loose-cells $p \colon X \lto Y$ and $q \colon Y \lto Z$, we denote by $q \odot p \colon X \lto Z$ and $q \odotl p \colon X \lto Z$ their composite and left-composite (\cite[Definition~2.5]{arkor2024formal}) respectively, when they exist.

A \ve{} (or simply \emph{equipment}) is a \vdc{} for which every object $A$ admits a loose-identity $A(1, 1) \colon A \lto A$ (\cite[Definition~2.4]{arkor2024formal}); and for which, for every loose-cell $p \colon X \lto Y$ and tight-cells $g \colon W \to X$ and $f \colon Z \to Y$, there is a restriction loose-cell $p(f, g) \colon W \lto Z$ (\cite[Definition~2.7]{arkor2024formal}). We denote the restriction $A(1, 1)(f, g)$ along a loose-identity $A(1, 1)$ by $A(f, g)$: in $\VCat$, this restriction is given by the hom-objects of $A$. Virtual equipments admit a string diagram calculus that we shall employ in some proofs, but are not required to understand the definitions or theorem statements.

One notable difference between the formal categorical setting and typical approaches to enriched category theory (\cf{}~\cite{dubuc1970kan,kelly1982basic}) is that our limits and colimits are weighted by $\V$-distributors rather than $\V$-presheaves and $\V$-copresheaves: for a $\V$-distributor $p \colon X \lto Y$ and $\V$-functor $f \colon Y \to Z$, the $p$-weighted colimit of $f$, when it exists, is a $\V$-functor $p \wc f \colon X \to Z$ (\cite[Definition~3.8]{arkor2024formal}); and for a $\V$-distributor $p \colon X \lto Y$ and $\V$-functor $g \colon X \to Z$, the $p$-weighted limit of $g$, when it exists, is a $\V$-functor $p \wl g \colon Y \to Z$ (\cite[Definition~3.9]{arkor2024formal}). In particular, given tight-cells $\jAE$ and $f \colon A \to Z$, the (pointwise) left extension  $j \plx f \colon E \to Z$ is the $E(j, 1)$-weighted colimit of $f$.

\subsection{Relative adjunctions, relative monads, and algebra objects}

We recall the concepts introduced in \cite{arkor2024formal} that are most relevant to our study of relative monadicity. The central concept is that of a relative monad, which is a generalisation of a monad that permits the carrier to be an arbitrary tight-cell, rather than an endo-tight-cell. Accordingly, a relative monad is parameterised by a tight-cell $\jAE$ that plays the same role that the identity tight-cell plays for a non-relative monad.

\begin{definition}[{\cite[Definition~4.1]{arkor2024formal}}]
    \label{relative-monad}
    A \emph{relative monad} comprises
    \begin{enumerate}
        \item a tight-cell $\jAE$, the \emph{root};
        \item a tight-cell $t \colon A \to E$, the \emph{carrier};
        \item a 2-cell $\dag \colon E(j, t) \tto E(t, t)$, the \emph{extension operator};
        \item a 2-cell $\eta \colon j \tto t$, the \emph{unit},
    \end{enumerate}
    satisfying associativity and left- and right-unit laws.
    A \emph{$j$-relative monad} (or simply \emph{$j$-monad}) is a relative monad with root $j$.
\end{definition}

Just as monads are typically induced by adjunctions, relative monads are typically induced by relative adjunctions. A relative adjunction is a generalisation of an adjunction that permits the left adjoint to have domain distinct from the codomain of the right adjoint.

\begin{definition}[{\cite[Definition~5.1]{arkor2024formal}}]
    \label{relative-adjunction}
    A \emph{relative adjunction} comprises
    \[\begin{tikzcd}
    	& C \\
    	A && E
    	\arrow[""{name=0, anchor=center, inner sep=0}, "\ell"{pos=0.4}, from=2-1, to=1-2]
    	\arrow[""{name=1, anchor=center, inner sep=0}, "r"{pos=0.6}, from=1-2, to=2-3]
    	\arrow["j"', from=2-1, to=2-3]
    	\arrow["\dashv"{anchor=center}, shift right=2, draw=none, from=0, to=1]
    \end{tikzcd}\]
    \begin{enumerate}
        \item a tight-cell $\jAE$, the \emph{root};
        \item a tight-cell $\ell \colon A \to C$, the \emph{left (relative) adjoint};
        \item a tight-cell $r \colon C \to E$, the \emph{right (relative) adjoint};
        \item an isomorphism $\sharp \colon C(\ell, 1) \iso E(j, r) \cocolon \flat$, the \emph{(left- and right-) transposition operators}.
    \end{enumerate}
    We denote by $\ljr$ such data, and call $C$ the \emph{apex}. A \emph{$j$-relative adjunction} (or simply \emph{$j$-adjunction}) is a relative adjunction with root $j$.
\end{definition}

Every $j$-adjunction $\ljr$ induces a $j$-monad with carrier $(\ell \d r) \colon A \to E$~\cite[Theorem~5.24]{arkor2024formal}. A $j$-adjunction that induces a $j$-monad $T$ is called a \emph{resolution} of $T$~\cite[Definition~5.25]{arkor2024formal}.

Relative monads admit notions of algebra and algebra morphism analogous to those for non-relative monads. However, it is important to observe that, in the context of a \ve{}, the appropriate notion of morphism of algebras is graded by a chain of loose-cells. This permits the consideration of morphisms between algebras with different domains, in contrast to morphisms of algebras in a 2-category~\cite{street1972formal}.

\begin{definition}[{\cite[Definitions~6.1 \& 6.29]{arkor2024formal}}]
	\label{algebra}
    Let $T$ be a relative monad. An \emph{algebra for $T$} (or simply \emph{$T$-algebra}) comprises
    \begin{enumerate}
		\item an object $D$, the \emph{domain};
        \item a tight-cell $e \colon D \to E$, the \emph{carrier};
        \item a 2-cell $\aop \colon E(j, e) \tto E(t, e)$, the \emph{extension operator},
    \end{enumerate}
    satisfying compatibility laws with the extension operator $\dag$ and unit $\eta$ for $T$.

    Let $(e \colon D \to E, \aop)$ and $(e' \colon D' \to E, \aop')$ be $T$-algebras. A \emph{$(p_1, \ldots, p_n)$-graded $T$-algebra morphism} from $(e, \aop)$ to $(e', \aop')$ is a 2-cell \[\epsilon \colon E(1, e), p_1, \ldots, p_n \tto E(1, e')\] satisfying a compatibility law with the two extension operators $\aop$ and $\aop'$.
\end{definition}

\begin{remark}[{\cite[Remark~6.30]{arkor2024formal}}]
    \label{alternative-graded-morphism}
    A $(p_1, \ldots, p_n)$-graded $T$-algebra morphism from $(e, \aop)$ to $(e', \aop')$ is equivalently a 2-cell \[\epsilon \colon p_1, \ldots, p_n \tto E(e, e')\]
    satisfying a compatibility law with the two extension operators $\aop$ and $\aop'$. We shall use the term \emph{algebra morphism} to refer to both forms of 2-cell interchangeably.
\end{remark}

An algebra object for a relative monad $T$ is a universal $T$-algebra. In $\VCat$, this corresponds to the $\V$-category of $T$-algebras~\cite[\S8.3]{arkor2024formal}. Algebra objects are the central object of interest for the relative monadicity theorem.

\begin{definition}[{\cite[Definition~6.33]{arkor2024formal}}]
	\label{algebra-object}
	Let $T$ be a relative monad. A $T$-algebra $(u_T \colon \Alg(T) \to E, \aop_T)$ is an
	\emph{algebra object} for $T$ when
	\begin{enumerate}
		\item \label{algebra-object-tight-cell-UP} for every $T$-algebra $(e \colon D \to E, \aop)$, there is a
		unique tight-cell $\unit_{(e, \aop)} \colon D \to \Alg(T)$ such that $\unit_{(e, \aop)} \d u_T = e$ and $\aop_T(1, \unit_{(e, \aop)}) = \aop$;
		\item \label{algebra-object-2-cell-UP} for every graded $T$-algebra morphism $\epsilon \colon E(1, e), p_1, \dots, p_n \tto E(1, e')$ there is a unique 2-cell $\unit_\epsilon \colon \Alg(T)(1, \unit_{(e, \aop)}), p_1, \dots, p_n \tto \Alg(T)(1, \unit_{(e', \aop')})$ such that:
		\[
		\epsilon~=~
		\begin{tikzcd}
			E & D & \cdots & {D'} \\
			E &&& {D'}
			\arrow[""{name=0, anchor=center, inner sep=0}, Rightarrow, no head, from=1-4, to=2-4]
			\arrow["{p_n}"', "\shortmid"{marking}, from=1-4, to=1-3]
			\arrow["{p_1}"', "\shortmid"{marking}, from=1-3, to=1-2]
			\arrow["{E(1, e)}"', "\shortmid"{marking}, from=1-2, to=1-1]
			\arrow["{E(1, e')}", "\shortmid"{marking}, from=2-4, to=2-1]
			\arrow[""{name=1, anchor=center, inner sep=0}, Rightarrow, no head, from=1-1, to=2-1]
			\arrow["{E(1, u_T), \unit_\epsilon}"{description}, draw=none, from=0, to=1]
		\end{tikzcd}
		\]
        \qedhere
	\end{enumerate}
\end{definition}

The algebra object for a relative monad $T$ forms a resolution of $T$~\cite[Lemma~6.35]{arkor2024formal}.
\[\begin{tikzcd}
	& {\Alg(T)} \\
	A && E
	\arrow[""{name=0, anchor=center, inner sep=0}, "{f_T}", from=2-1, to=1-2]
	\arrow[""{name=1, anchor=center, inner sep=0}, "{u_T}", from=1-2, to=2-3]
	\arrow["j"', from=2-1, to=2-3]
	\arrow["\dashv"{anchor=center}, shift right=2, draw=none, from=0, to=1]
\end{tikzcd}\]
Furthermore, this resolution is universal in that sense that, for any resolution $\ljr$ of $T$ with apex $C$, there is a unique morphism of resolutions, the \emph{comparison tight-cell}, from $\ljr$ to $f_T \jadj u_T$, \ie a tight-cell $\unit_{\ljr} \colon C \to \Alg(T)$ rendering the following diagram commutative~\cite[Corollary~6.41]{arkor2024formal}.
\[\begin{tikzcd}[row sep=small]
	& C \\
	A && E \\
	& {\Alg(T)}
	\arrow["r", from=1-2, to=2-3]
	\arrow["{\unit_{\ljr}}"{description}, from=1-2, to=3-2]
	\arrow["\ell", from=2-1, to=1-2]
	\arrow["{f_T}"', from=2-1, to=3-2]
	\arrow["{u_T}"', from=3-2, to=2-3]
\end{tikzcd}\]

\section{Creation of limits, colimits, and isomorphisms}
\label{colimits}

We begin with some elementary observations on the creation of limits and certain colimits by forgetful functors from categories of algebras for relative monads.

\subsection{Strict creation}

We first introduce the notion of strict creation of weighted limits and colimits, which will be important in the strict relative monadicity theorem (\cref{strict}). We then introduce creation of isomorphisms, and non-strict creation of limits and colimits in \cref{nonstrict-creation}, which will be important in the non-strict relative monadicity theorem (\cref{non-strict}). For the notions of weighted limit and weighted colimit in an equipment, see \cite[\S3.2]{arkor2024formal}.

\begin{remark}
    Since there is substantial inconsistency in the literature regarding which of strict or non-strict creation (and consequently monadicity) should be taken as fundamental, we shall be explicit about strictness throughout; the unqualified terms are used only informally.
\end{remark}

\begin{definition}
    \label{colimit-creation}
    Let $p \colon Y \lto Z$ be a loose-cell, and let $f \colon Z \to W$ and $g \colon W \to X$ be tight-cells.
    A colimit $(p \wc (f \d g), \lambda)$ in $X$ is \emph{strictly created by $g$} when there exists a $p$-cocone $(w, \lambda')$ for $f$, comprising a tight-cell $w \colon Y \to W$ and a 2-cell $\lambda' \colon p \tto W(f, w)$, such that
    \begin{enumerate}
        \item $(w, \lambda')$ is the unique pair satisfying $p \wc (f \d g) = (w \d g)$ and $\lambda = (\lambda' \d g)$;
        \item $(w, \lambda')$ is the weighted colimit $p \wc f$. \qedhere
    \end{enumerate}
\end{definition}

In particular, strict creation of $p \wc (f \d g)$ by $g$ implies preservation of $p \wc f$ by $g$ (in the sense of \cite[Definition~3.8]{arkor2024formal}). Our motivating example of colimit creation will be the forgetful tight-cell $u_T \colon \Alg(T) \to E$ from an algebra object for a $j$-monad, which strictly creates every colimit that is $j$-absolute, in the sense recalled below (see \cref{j-absolute-is-preservation-by-n_j} for a characterisation of $j$-absoluteness in the enriched context).

\begin{definition}[{\cite[Definition~3.21]{arkor2024formal}}]
	\label{j-absolute}
    Let $p \colon Y \lto Z$ be a loose-cell and let $\jAE$ and $f \colon Z \to E$ be
    tight-cells.
    A colimit $(p \wc f, \lambda)$ is \emph{$j$-absolute} if the left-composite
    $E(j, f) \odotl p$ exists, and the canonical 2-cell $E(j, f) \odotl p \tto E(j, p \wc f)$ is invertible.
\end{definition}

\begin{proposition}
    \label{u_T-creates-j-absolute-colimits}
    Let $\jAE$ be a tight-cell and let $T$ be $j$-monad admitting an algebra object $(u_T, \aop_T)$.
    The tight-cell $u_T \colon \Alg(T) \to E$ strictly creates $j$-absolute colimits.
\end{proposition}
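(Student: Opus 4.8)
The plan is to lift the colimit from $E$ to $\Alg(T)$ using the universal property of the algebra object (\cref{algebra-object}), with $j$-absoluteness supplying exactly the data needed to equip the colimit with a $T$-algebra structure. Fix a loose-cell $p \colon Y \lto Z$ and a tight-cell $f \colon Z \to \Alg(T)$, and suppose the colimit $(p \wc (f \d u_T), \lambda)$ exists in $E$ and is $j$-absolute; write $e \defeq p \wc (f \d u_T)$ and $\aop_f \defeq \aop_T(1, f) \colon E(j, f \d u_T) \tto E(t, f \d u_T)$ for the $T$-algebra structure that $f$ carries via the universal algebra. The first step is to construct an extension operator $\aop_e \colon E(j, e) \tto E(t, e)$ exhibiting $e$ as a $T$-algebra. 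Since the colimit is $j$-absolute, the canonical 2-cell $E(j, f \d u_T) \odotl p \tto E(j, e)$ is invertible, so by the opcartesian universal property of the left-composite it suffices to define a 2-cell out of the path $E(j, f \d u_T), p$. I take the 2-cell given by applying $\aop_f$ to the left leg and $\lambda$ to the right leg, and then composing the resulting path along the canonical 2-cell $E(t, f \d u_T), E(f \d u_T, e) \tto E(t, e)$. That $\aop_e$ satisfies the associativity and unit laws (compatibility with $\dag$ and $\eta$) then reduces, via the uniqueness clauses of the colimit and left-composite universal properties, to the corresponding laws for $\aop_f$.

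With $(e, \aop_e)$ a $T$-algebra with domain $Y$, the tight-cell universal property (\cref{algebra-object-tight-cell-UP}) yields a unique tight-cell $w \colon Y \to \Alg(T)$ with $w \d u_T = e$ and $\aop_T(1, w) = \aop_e$. By the very definition of $\aop_e$, the cocone $\lambda \colon p \tto E(f \d u_T, e)$ is a $p$-graded $T$-algebra morphism from $(f \d u_T, \aop_f)$ to $(e, \aop_e)$ (in the form of \cref{alternative-graded-morphism}); the 2-cell universal property (\cref{algebra-object-2-cell-UP}) therefore supplies a unique 2-cell $\lambda' \colon p \tto \Alg(T)(f, w)$ with $\lambda' \d u_T = \lambda$. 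The uniqueness of $w$ and $\lambda'$ subject to $w \d u_T = e$ and $\lambda' \d u_T = \lambda$ is exactly the first condition of \cref{colimit-creation}.

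It remains to verify the second condition: that $(w, \lambda')$ is the weighted colimit $p \wc f$ in $\Alg(T)$. Given any competing $T$-algebra cocone $\mu \colon p \tto \Alg(T)(f, v)$, applying $u_T$ produces a cocone $\mu \d u_T \colon p \tto E(f \d u_T, v \d u_T)$ in $E$, which factors uniquely through $\lambda$ by the colimit property of $e$; I then lift this factorisation back along $u_T$ using the 2-cell universal property of the algebra object, and check that the lifted mediating cell is again an algebra morphism and is unique as such. I expect this last step to be the main obstacle, since it requires threading the comparison cell obtained in $E$ through both clauses of the universal property of $\Alg(T)$ simultaneously, and confirming that the factorisation respects the extension operators — it is here, rather than in the bookkeeping of the algebra-axiom verifications of the first step, that the argument is genuinely delicate. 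By contrast, the compatibility checks for $\aop_e$, though the most calculation-heavy, are routine diagram chases once $j$-absoluteness is in hand.
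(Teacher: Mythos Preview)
Your approach is essentially that of the paper: use $j$-absoluteness (the left-opcartesian 2-cell) to equip the colimit in $E$ with a $T$-algebra structure $\aop_e$ for which $\lambda$ becomes a graded algebra morphism, then invoke both clauses of the universal property of $\Alg(T)$ to lift the pair $(e,\lambda)$ to $(w,\lambda')$, and finally verify the colimit property by translating along $u_T$.

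Two places deserve sharpening. First, your uniqueness argument for condition~(1) of \cref{colimit-creation} is not quite complete: the universal property of $\Alg(T)$ gives uniqueness of $w$ only subject to $w \d u_T = e$ \emph{and} $\aop_T(1,w) = \aop_e$, not subject to $w \d u_T = e$ alone. The missing step (which the paper makes explicit) is that $\aop_e$ is the \emph{unique} algebra structure on $e$ for which $\lambda$ is a graded algebra morphism --- this follows from the opcartesian universal property you already invoked --- and that any competing $(w',\lambda'')$ over $(e,\lambda)$ forces $\aop_T(1,w')$ to be such a structure, since every 2-cell in $\Alg(T)$ descends to an algebra morphism. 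Second, in a \ve{} the universal property of a weighted colimit asks for a bijection between 2-cells $q_1,\ldots,q_n \tto \Alg(T)(p \wc f,1)$ and 2-cells $p,q_1,\ldots,q_n \tto \Alg(T)(f,1)$ for arbitrary chains $q_1,\ldots,q_n$, not merely factorisation of cocones through a tight-cell $v$; the paper handles this uniformly by reducing to the corresponding bijection of graded $T$-algebra morphisms in $E$, where the colimit property of $e$ applies directly and the defining equation for $\aop_e$ ensures the bijection respects algebra morphisms. Your lifting-then-checking strategy would work too, but must be phrased at this level of generality. Contrary to your expectation, this step is not delicate: once $\aop_e$ is defined so that $\lambda$ intertwines $\aop_f$ and $\aop_e$, preservation of the algebra-morphism condition under the bijection is immediate.
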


\begin{proof}
    Let $p \colon Y \lto Z$ be a loose-cell and let $f \colon Z \to \Alg(T)$ be a tight-cell. Suppose that $(p \wc (f \d u_T), \lambda)$ is a $j$-absolute colimit. The tight-cell $(f \d u_T)$ forms a $T$-algebra by equipping it with the 2-cell $\aop_T(1, f) \colon E(j, (f; u_T)) \tto E(t, (f; u_T))$.

    We will show first that there is a 2-cell $\aop \colon E(j, p \wc (f \d u_T)) \tto E(t, p \wc (f \d u_T))$ equipping $p \wc (f \d u_T)$ with the structure of a $T$-algebra, and then that this is the unique algebra structure for which $\lambda \colon p \tto E(u_T f, p \wc (f \d u_T))$ is a $(p)$-graded $T$-algebra morphism from $(f \d u_T)$ to $p \wc (f \d u_T)$ in the sense of \cref{alternative-graded-morphism}.

    By definition, $j$-absoluteness of $(p \wc (f \d u_T), \lambda)$ means that the 2-cell
    \[
    \begin{tangle}{(4,3)}[trim y]
        \tgBorderA{(0,0)}{\tgColour6}{\tgColour4}{\tgColour4}{\tgColour6}
        \tgBorderA{(1,0)}{\tgColour4}{\tgColour0}{\tgColour0}{\tgColour4}
        \tgBorderA{(2,0)}{\tgColour0}{\tgColour8}{\tgColour8}{\tgColour0}
        \tgBorderA{(3,0)}{\tgColour8}{\tgColour7}{\tgColour7}{\tgColour8}
        \tgBorderA{(0,1)}{\tgColour6}{\tgColour4}{\tgColour4}{\tgColour6}
        \tgBorderA{(1,1)}{\tgColour4}{\tgColour0}{\tgColour7}{\tgColour4}
        \tgBorderA{(2,1)}{\tgColour0}{\tgColour8}{\tgColour7}{\tgColour7}
        \tgBorderA{(3,1)}{\tgColour8}{\tgColour7}{\tgColour7}{\tgColour7}
        \tgBorderA{(0,2)}{\tgColour6}{\tgColour4}{\tgColour4}{\tgColour6}
        \tgBorderA{(1,2)}{\tgColour4}{\tgColour7}{\tgColour7}{\tgColour4}
        \tgBlank{(2,2)}{\tgColour7}
        \tgBlank{(3,2)}{\tgColour7}
        \tgArrow{(1,0.5)}{3}
        \tgCell[(2,0)]{(2,1)}{\lambda}
        \tgArrow{(2,0.5)}{3}
        \tgArrow{(1,1.5)}{3}
        \tgArrow{(0,1.5)}{1}
        \tgArrow{(0,0.5)}{1}
        \tgAxisLabel{(0.5,0.75)}{south}{j}
        \tgAxisLabel{(1.5,0.75)}{south}{u_T}
        \tgAxisLabel{(2.5,0.75)}{south}{f}
        \tgAxisLabel{(3.5,0.75)}{south}{p}
        \tgAxisLabel{(0.5,2.25)}{north}{j}
        \tgAxisLabel{(1.5,2.25)}{north}{p \wc (f \d u_T)}
    \end{tangle}
    \]
    is left-opcartesian, so the 2-cell on the left of the equation below factors uniquely therethrough.
    \begin{tangleeqs*}
    \begin{tangle}{(4,4)}[trim y]
        \tgBorderA{(0,0)}{\tgColour6}{\tgColour4}{\tgColour4}{\tgColour6}
        \tgBorderA{(1,0)}{\tgColour4}{\tgColour0}{\tgColour0}{\tgColour4}
        \tgBorderA{(2,0)}{\tgColour0}{\tgColour8}{\tgColour8}{\tgColour0}
        \tgBorderA{(3,0)}{\tgColour8}{\tgColour7}{\tgColour7}{\tgColour8}
        \tgBorderA{(0,1)}{\tgColour6}{\tgColour4}{\tgColour4}{\tgColour6}
        \tgBorder{(0,1)}{0}{1}{0}{0}
        \tgBorderA{(1,1)}{\tgColour4}{\tgColour0}{\tgColour0}{\tgColour4}
        \tgBorder{(1,1)}{0}{0}{0}{1}
        \tgBorderA{(2,1)}{\tgColour0}{\tgColour8}{\tgColour8}{\tgColour0}
        \tgBorderA{(3,1)}{\tgColour8}{\tgColour7}{\tgColour7}{\tgColour8}
        \tgBorderA{(0,2)}{\tgColour6}{\tgColour4}{\tgColour4}{\tgColour6}
        \tgBorderA{(1,2)}{\tgColour4}{\tgColour0}{\tgColour7}{\tgColour4}
        \tgBorderA{(2,2)}{\tgColour0}{\tgColour8}{\tgColour7}{\tgColour7}
        \tgBorderA{(3,2)}{\tgColour8}{\tgColour7}{\tgColour7}{\tgColour7}
        \tgBorderA{(0,3)}{\tgColour6}{\tgColour4}{\tgColour4}{\tgColour6}
        \tgBorderA{(1,3)}{\tgColour4}{\tgColour7}{\tgColour7}{\tgColour4}
        \tgBlank{(2,3)}{\tgColour7}
        \tgBlank{(3,3)}{\tgColour7}
        \tgArrow{(1,1.5)}{3}
        \tgArrow{(1,2.5)}{3}
        \tgCell[(1,0)]{(0.5,1)}{\aop_T}
        \tgArrow{(1,0.5)}{3}
        \tgArrow{(0,1.5)}{1}
        \tgArrow{(0,2.5)}{1}
        \tgArrow{(0,0.5)}{1}
        \tgArrow{(2,0.5)}{3}
        \tgArrow{(2,1.5)}{3}
        \tgCell[(2,0)]{(2,2)}{\lambda}
        \tgAxisLabel{(0.5,0.75)}{south}{j}
        \tgAxisLabel{(1.5,0.75)}{south}{u_T}
        \tgAxisLabel{(2.5,0.75)}{south}{f}
        \tgAxisLabel{(3.5,0.75)}{south}{p}
        \tgAxisLabel{(0.5,3.25)}{north}{t}
        \tgAxisLabel{(1.5,3.25)}{north}{p \wc (f \d u_T)}
    \end{tangle}
    \=
    \begin{tangle}{(4,4)}[trim y]
        \tgBorderA{(0,0)}{\tgColour6}{\tgColour4}{\tgColour4}{\tgColour6}
        \tgBorderA{(1,0)}{\tgColour4}{\tgColour0}{\tgColour0}{\tgColour4}
        \tgBorderA{(2,0)}{\tgColour0}{\tgColour8}{\tgColour8}{\tgColour0}
        \tgBorderA{(3,0)}{\tgColour8}{\tgColour7}{\tgColour7}{\tgColour8}
        \tgBorderA{(0,1)}{\tgColour6}{\tgColour4}{\tgColour4}{\tgColour6}
        \tgBorderA{(1,1)}{\tgColour4}{\tgColour0}{\tgColour7}{\tgColour4}
        \tgBorderA{(2,1)}{\tgColour0}{\tgColour8}{\tgColour7}{\tgColour7}
        \tgBorderA{(3,1)}{\tgColour8}{\tgColour7}{\tgColour7}{\tgColour7}
        \tgBorderA{(0,2)}{\tgColour6}{\tgColour4}{\tgColour4}{\tgColour6}
        \tgBorder{(0,2)}{0}{1}{0}{0}
        \tgBorderA{(1,2)}{\tgColour4}{\tgColour7}{\tgColour7}{\tgColour4}
        \tgBorder{(1,2)}{0}{0}{0}{1}
        \tgBlank{(2,2)}{\tgColour7}
        \tgBlank{(3,2)}{\tgColour7}
        \tgBorderA{(0,3)}{\tgColour6}{\tgColour4}{\tgColour4}{\tgColour6}
        \tgBorderA{(1,3)}{\tgColour4}{\tgColour7}{\tgColour7}{\tgColour4}
        \tgBlank{(2,3)}{\tgColour7}
        \tgBlank{(3,3)}{\tgColour7}
        \tgArrow{(1,0.5)}{3}
        \tgArrow{(1,1.5)}{3}
        \tgCell[(1,0)]{(0.5,2)}{\aop}
        \tgArrow{(0,0.5)}{1}
        \tgArrow{(0,1.5)}{1}
        \tgArrow{(0,2.5)}{1}
        \tgArrow{(1,2.5)}{3}
        \tgCell[(2,0)]{(2,1)}{\lambda}
        \tgArrow{(2,0.5)}{3}
        \tgAxisLabel{(0.5,0.75)}{south}{j}
        \tgAxisLabel{(1.5,0.75)}{south}{u_T}
        \tgAxisLabel{(2.5,0.75)}{south}{f}
        \tgAxisLabel{(3.5,0.75)}{south}{p}
        \tgAxisLabel{(0.5,3.25)}{north}{t}
        \tgAxisLabel{(1.5,3.25)}{north}{p \wc (f \d u_T)}
    \end{tangle}
    \end{tangleeqs*}
    The induced 2-cell $\aop$ makes $p \wc (f \d u_T)$ into a $T$-algebra, the unit and extension operator laws following from those of $\aop_T$ by pasting the left-opcartesian 2-cell.
    Moreover, the above equation is exactly the equation required for $\lambda$ to form a $(p)$-graded $T$-algebra morphism. The 2-cell $\aop$ is therefore the unique algebra structure for which $\lambda$ is such a morphism.

    The universal property of the algebra object for $T$ thus induces a unique tight-cell ${(p \wc f) \colon Y \to \Alg(T)}$ and 2-cell $\lambda' \colon p \tto \Alg(T)(f, p \wc f)$ satisfying the following three equations.
    \[
      p \wc (f \d u_T) = (p \wc f) \d u_T
      \hspace{4em}
      \aop = \aop_T(1, p \wc f)
      \hspace{4em}
      \lambda = \lambda' \d u_T
    \]
    Hence the pair $(p \wc f, \lambda')$ satisfies the existence condition of (1) in \cref{colimit-creation}. Uniqueness of this pair is immediate from uniqueness of $\aop$.

    It remains to show that $(p \wc f, \lambda')$ is the colimit.
    This requires us to show that we have a bijection between 2-cells $q_1, \dots, q_n \tto \Alg(T)(p \wc f, 1)$ and 2-cells $p, q_1, \dots, q_n \tto \Alg(T)(f, 1)$, given by pasting with the following 2-cell.
    \[
    \begin{tangle}{(4,4)}[trim y]
        \tgBlank{(0,0)}{\tgColour8}
        \tgBorderA{(1,0)}{\tgColour8}{\tgColour7}{\tgColour7}{\tgColour8}
        \tgBlank{(2,0)}{\tgColour7}
        \tgBorderA{(3,0)}{\tgColour7}{\tgColour0}{\tgColour0}{\tgColour7}
        \tgBorderA{(0,1)}{\tgColour8}{\tgColour8}{\tgColour0}{\tgColour8}
        \tgBorderA{(1,1)}{\tgColour8}{\tgColour7}{\tgColour0}{\tgColour0}
        \tgBorderA{(2,1)}{\tgColour7}{\tgColour7}{\tgColour7}{\tgColour0}
        \tgBorderA{(3,1)}{\tgColour7}{\tgColour0}{\tgColour0}{\tgColour7}
        \tgBorderA{(0,2)}{\tgColour8}{\tgColour0}{\tgColour0}{\tgColour8}
        \tgBlank{(1,2)}{\tgColour0}
        \tgBorderC{(2,2)}{0}{\tgColour0}{\tgColour7}
        \tgBorderC{(3,2)}{1}{\tgColour0}{\tgColour7}
        \tgBorderA{(0,3)}{\tgColour8}{\tgColour0}{\tgColour0}{\tgColour8}
        \tgBlank{(1,3)}{\tgColour0}
        \tgBlank{(2,3)}{\tgColour0}
        \tgBlank{(3,3)}{\tgColour0}
        \tgCell[(2,0)]{(1,1)}{\lambda'}
        \tgArrow{(2,1.5)}{3}
        \tgArrow{(2.5,2)}{0}
        \tgArrow{(3,1.5)}{1}
        \tgArrow{(0,1.5)}{1}
        \tgArrow{(3,0.5)}{1}
        \tgArrow{(0,2.5)}{1}
        \tgAxisLabel{(1.5,0.75)}{south}{p}
        \tgAxisLabel{(3.5,0.75)}{south}{p \wc f}
        \tgAxisLabel{(0.5,3.25)}{north}{f}
    \end{tangle}
    \]
    By the universal property of the algebra object, this is equivalent to having a bijection between $T$-algebra morphisms $q_1, \dots, q_n \tto E(p \wc (f \d u_T), u_T)$ and $T$-algebra morphisms ${p, q_1, \dots, q_n \tto E(f, u_T)}$, given by pasting with the following 2-cell.
    \[
    \begin{tangle}{(4,4)}[trim y]
        \tgBlank{(0,0)}{\tgColour8}
        \tgBorderA{(1,0)}{\tgColour8}{\tgColour7}{\tgColour7}{\tgColour8}
        \tgBlank{(2,0)}{\tgColour7}
        \tgBorderA{(3,0)}{\tgColour7}{\tgColour4}{\tgColour4}{\tgColour7}
        \tgBorderA{(0,1)}{\tgColour8}{\tgColour8}{\tgColour4}{\tgColour8}
        \tgBorderA{(1,1)}{\tgColour8}{\tgColour7}{\tgColour4}{\tgColour4}
        \tgBorderA{(2,1)}{\tgColour7}{\tgColour7}{\tgColour7}{\tgColour4}
        \tgBorderA{(3,1)}{\tgColour7}{\tgColour4}{\tgColour4}{\tgColour7}
        \tgBorderA{(0,2)}{\tgColour8}{\tgColour4}{\tgColour4}{\tgColour8}
        \tgBlank{(1,2)}{\tgColour4}
        \tgBorderC{(2,2)}{0}{\tgColour4}{\tgColour7}
        \tgBorderC{(3,2)}{1}{\tgColour4}{\tgColour7}
        \tgBorderA{(0,3)}{\tgColour8}{\tgColour4}{\tgColour4}{\tgColour8}
        \tgBlank{(1,3)}{\tgColour4}
        \tgBlank{(2,3)}{\tgColour4}
        \tgBlank{(3,3)}{\tgColour4}
        \tgCell[(2,0)]{(1,1)}{\lambda}
        \tgArrow{(2,1.5)}{3}
        \tgArrow{(2.5,2)}{0}
        \tgArrow{(3,1.5)}{1}
        \tgArrow{(0,1.5)}{1}
        \tgArrow{(3,0.5)}{1}
        \tgArrow{(0,2.5)}{1}
        \tgAxisLabel{(1.5,0.75)}{south}{p}
        \tgAxisLabel{(3.5,0.75)}{south}{p \wc (f \d u_T)}
        \tgAxisLabel{(0.5,3.25)}{north}{f \d u_T}
    \end{tangle}
    \]
    Since $(p \wc (f \d u_T), \lambda)$ is a colimit, pasting induces a bijection between 2-cells.
    That this bijection preserves $T$-algebra morphisms is immediate from the defining equation for $\aop$ above.
\end{proof}

Though we shall not make use of it in our proof of relative monadicity, it is nonetheless useful to observe that forgetful tight-cells strictly create all limits.
Strict creation of limits is dual to strict creation of colimits (since a weighted limit $p \wl f$ in an equipment $\X$ is precisely a weighted colimit $p \wc f$ in the dual equipment $\X\co$).
We spell out the definition explicitly for convenience.
\begin{definition}
    \label{limit-creation}
    Let $p \colon Y \lto Z$ be a loose-cell, and let $f \colon Y \to W$ and $g \colon W \to X$ be tight-cells.
    A limit $(p \wl (f \d g), \mu)$ in $X$ is \emph{strictly created by $g$} when there exists a $p$-cone $(w, \mu')$ for $f$, comprising a tight-cell $w \colon Z \to W$ and a 2-cell $\mu' \colon p \tto W(w, f)$, such that
    \begin{enumerate}
        \item $(w, \mu')$ is the unique pair satisfying $p \wl (f \d g) = (w \d g)$ and $\mu = (\mu' \d g)$;
        \item $(w, \mu')$ is the weighted limit $p \wl f$.
        \qedhere
    \end{enumerate}
\end{definition}

In particular, strict creation of limits implies preservation (in the sense of \cite[Definition~3.9]{arkor2024formal}).

\begin{proposition}
    \label{u_T-creates-limits}
    Let $\jAE$ be a tight-cell and let $T$ be $j$-monad admitting an algebra object $(u_T, \aop_T)$.
    The tight-cell $u_T \colon \Alg(T) \to E$ strictly creates limits.
\end{proposition}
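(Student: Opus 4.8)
The plan is to mirror the argument for \cref{u_T-creates-j-absolute-colimits} in the dual direction, exploiting the observation in the preceding discussion that a limit $p \wl h$ in $\X$ is a colimit in the dual equipment $\X\co$. The crucial difference is that \emph{no} absoluteness hypothesis will be required: whereas defining the algebra structure on a colimit forced us to assume $j$-absoluteness in order to obtain a left-opcartesian factorisation, the corresponding step for limits will be handled directly by the universal property of the limit, which governs precisely the covariant position in which the algebra extension operator takes its values. So I would fix a tight-cell $f \colon Y \to \Alg(T)$ and suppose that $(p \wl (f \d u_T), \mu)$ is a limit in $E$; writing $h \defeq (f \d u_T)$ and $L \defeq p \wl h$, I note that $h$ carries the $T$-algebra structure $\aop_T(1, f) \colon E(j, h) \tto E(t, h)$.

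First I would equip $L$ with a $T$-algebra structure $\aop \colon E(j, L) \tto E(t, L)$. By the universal property of the limit, a $2$-cell with codomain $E(t, L) = E(t, p \wl h)$ is determined by a $p$-cone with codomain $E(t, h)$; I therefore assemble such a cone $E(j, L), p \tto E(t, h)$ by acting on $p$ with the limit cone $\mu \colon p \tto E(L, h)$, composing with the multiplication of the loose-identity on $E$ to form $E(j, L), p \tto E(j, h)$, and then post-composing with $\aop_T(1, f)$; transposing across the limit yields $\aop$. This is exactly the point at which the argument diverges from the colimit case: because $E(t, L)$ occupies the covariant slot controlled by the limit, $\aop$ exists for an \emph{arbitrary} limit, with no invertibility condition imposed on any canonical comparison cell. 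The unit and extension-operator laws for $\aop$ then follow from those of $\aop_T$ via the uniqueness half of the limit's universal property, and by construction $\mu$ is a $(p)$-graded $T$-algebra morphism from $h$ to $(L, \aop)$ in the sense of \cref{alternative-graded-morphism}; moreover $\aop$ is the unique algebra structure rendering $\mu$ such a morphism.

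Next I would invoke the universal property of the algebra object (\cref{algebra-object}) for the $T$-algebra $(L, \aop)$ together with the algebra morphism $\mu$, obtaining a unique tight-cell $w \colon Z \to \Alg(T)$ and $2$-cell $\mu' \colon p \tto \Alg(T)(w, f)$ satisfying $w \d u_T = L$, $\aop_T(1, w) = \aop$, and $\mu = \mu' \d u_T$; the uniqueness clause supplies condition (1) of \cref{limit-creation}. It then remains to verify condition (2), namely that $(w, \mu')$ is genuinely the weighted limit $p \wl f$ in $\Alg(T)$. As in the closing part of the proof of \cref{u_T-creates-j-absolute-colimits}, this amounts to transporting the limit bijection for $L$ in $E$ through the two universal properties of the algebra object, and checking that it restricts to a bijection between the relevant cones of $T$-algebra morphisms; the requisite compatibility is immediate from the defining equation of $\aop$. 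I expect this final verification of the limit universal property in $\Alg(T)$ to be the most laborious step, but it is routine given the machinery already established; the genuinely new content is simply the observation that, for limits, the analogue of the $j$-absoluteness hypothesis evaporates.
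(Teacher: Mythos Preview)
Your proposal is correct and takes essentially the same approach as the paper: construct the algebra structure on $L$ via the limit's universal property (the paper phrases this compactly as the composite $E(j,L)\iso p\rx E(j,h)\xtto{\,p\rx\aop_T(1,f)\,}p\rx E(t,h)\iso E(t,L)$, which is exactly your cone-then-transpose step), observe that this is the unique structure making $\mu$ an algebra morphism, lift through the universal property of $\Alg(T)$, and verify the limit in $\Alg(T)$ analogously to the colimit case. One small slip: since $\mu\colon p\tto E(L,h)$, it is a $(p)$-graded $T$-algebra morphism from $(L,\aop)$ to $h$, not from $h$ to $(L,\aop)$.
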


\begin{proof}
    Let $p \colon Y \lto Z$ be a loose-cell and let $f \colon Y \to \Alg(T)$ be a tight-cell. Suppose that $(p \wl (f \d u_T), \mu)$ is a limit in $E$.
    The tight-cell $p \wl (f \d u_T)$ forms a $T$-algebra by equipping it with the following 2-cell,
    \[
      \aop \colon E(j, p \wl (f \d u_T))
      \iso p \rx E(j, (f \d u_T))
      \xtto{p \rx \aop_T(1, f)} p \rx E(t, (f \d u_T))
      \iso E(t, p \wl (f \d u_T))
    \]
    where the isomorphisms are induced by the universal property of the limit.
    That $\aop$ satisfies the $T$-algebra laws follows from the fact that $\aop_T$ does.
    This extension operator $\aop$ is the unique algebra structure for which the universal 2-cell $\mu \colon p \tto E(p \wl (f \d u_T), (f \d u_T))$ is a $(p)$-graded algebra morphism from $p \wl (f \d u_T)$ to $(f \d u_T)$. Thus the $p$-cone $(p \wl (f \d u_T), \mu)$ lifts uniquely through $u_T$ as a $p$-cone $(p \wl f, \mu')$.
    The proof that $(p \wl f, \mu')$ is a limit in $\Alg(T)$ is analogous to the proof that the unique lifting in \cref{u_T-creates-j-absolute-colimits} is a colimit.
\end{proof}

\begin{remark}
    From \cref{u_T-creates-limits}, we recover \cites[Proposition~1.2.1]{manes1967triple}[Corollary~5]{frei1969some} and \cite[Theorem~3.2]{wiesler1970remarks} for non-relative monads in $\Cat$ and $\VCat$, and \cite[Theorem~2.1.2]{walters1970categorical} for relative monads in $\Cat$ with \ff{} roots (\cf{}~\cite[Example~8.14(1)]{arkor2024formal}).
\end{remark}

\begin{remark}
    Observe that, since it is not possible to formulate the notions of weighted limit and weighted colimit in an arbitrary 2-category, it is not possible to recover analogues of
    \cref{u_T-creates-j-absolute-colimits,u_T-creates-limits} for algebra objects in a 2-category in the sense of \textcite[\S1]{street1972formal} that specialise appropriately to enriched categories of algebras. In fact, \cref{u_T-creates-j-absolute-colimits,u_T-creates-limits} appear to be new even for non-relative monads in the setting of $\VCat$.\footnotemark{}
    \footnotetext{\textcite[Theorem~II.2.1 \& Proposition~II.4.7 \& II.4.8]{dubuc1970kan} establishes the creation of coequalisers of $u_T$-contractible pairs, powers, conical limits, and ends, but not general weighted limits (the notion of which postdates \citeauthor{dubuc1970kan}'s work~\cite{borceux1973notion,auderset1974adjonctions,borceux1975notion}).}
\end{remark}

\subsection{Non-strict creation}
\label{nonstrict-creation}

Next, we consider the non-strict creation of limits and colimits, which will be important in the non-strict relative monadicity theorem (\cref{non-strict}). It will be useful to observe that non-strict creation of (co)limits is implied by strict creation of (co)limits together with the creation (or, equivalently, the reflection) of isomorphisms.

\begin{definition}
    A tight-cell $g \colon W \to X$ is \emph{conservative} if, for each parallel pair of tight-cells $f_1, f_2 \colon Z \rightrightarrows W$ and each 2-cell $\phi \colon f_1 \tto f_2$, invertibility of $(\phi \d g)$ implies invertibility of $\phi$.
\end{definition}

\begin{lemma}
    \label{u_T-is-conservative}
    Let $\jAE$ be a tight-cell and let $T$ be $j$-monad admitting an algebra object $(u_T, \aop_T)$.
    The tight-cell $u_T \colon \Alg(T) \to E$ is conservative.
\end{lemma}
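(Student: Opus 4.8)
The plan is to mimic the classical proof that a forgetful functor from a category of algebras is conservative: an algebra morphism whose underlying morphism is invertible has an inverse that is automatically again an algebra morphism, whence the original morphism is already invertible in the category of algebras.

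Let $f_1, f_2 \colon Z \rightrightarrows \Alg(T)$ be tight-cells and $\phi \colon f_1 \tto f_2$ a 2-cell with $(\phi \d u_T)$ invertible. By \cref{algebra-object-tight-cell-UP}, each $f_i$ exhibits a $T$-algebra with carrier $e_i \defeq (f_i \d u_T)$ and extension operator $\aop_i \defeq \aop_T(1, f_i)$. Taking $n = 0$ in \cref{algebra-object-2-cell-UP}, the globular 2-cell $\phi$ corresponds bijectively to a $T$-algebra morphism $\epsilon \colon E(1, e_1) \tto E(1, e_2)$ in the sense of \cref{alternative-graded-morphism}, the correspondence being characterised by $\epsilon = (\phi \d u_T)$. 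The hypothesis thus states exactly that $\epsilon$ is invertible, with inverse $\epsilon\inv \colon E(1, e_2) \tto E(1, e_1)$.

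The crux of the proof is to show that $\epsilon\inv$ is itself a $T$-algebra morphism from $(e_2, \aop_2)$ to $(e_1, \aop_1)$, \ie that it satisfies the compatibility law of \cref{algebra} with $\aop_2$ and $\aop_1$. This is the only step at which genuine content enters, and I expect it to be the main obstacle. I would establish it by a short string-diagram computation dual to the classical one-line verification that $h\inv \circ b = a \circ T h\inv$ follows from $h \circ a = b \circ T h$ for an invertible algebra morphism $h \colon (A, a) \to (B, b)$: starting from the compatibility law witnessing that $\epsilon$ is an algebra morphism, I would whisker $\epsilon\inv$ onto the appropriate strands on each side and cancel the resulting occurrences of $\epsilon$ against $\epsilon\inv$ using invertibility. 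The only care required is the bookkeeping of the extension operators and of the precise (graded) form of the compatibility law.

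Finally, \cref{algebra-object-2-cell-UP} lifts $\epsilon\inv$ to a unique 2-cell $\psi \colon f_2 \tto f_1$ with $(\psi \d u_T) = \epsilon\inv$. Since whiskering by $u_T$ is functorial for vertical composition, the vertical composites satisfy $((\phi \cdot \psi) \d u_T) = \epsilon \cdot \epsilon\inv = \mathrm{id}_{e_2}$ and $((\psi \cdot \phi) \d u_T) = \epsilon\inv \cdot \epsilon = \mathrm{id}_{e_1}$; as the identity 2-cells $\mathrm{id}_{f_2}$ and $\mathrm{id}_{f_1}$ whisker to these same identities, the bijectivity and uniqueness afforded by \cref{algebra-object-2-cell-UP} force $\phi \cdot \psi$ and $\psi \cdot \phi$ to be the respective identity 2-cells. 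Hence $\phi$ is invertible, as required.
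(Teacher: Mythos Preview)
Your proposal is correct and takes essentially the same approach as the paper: the inverse of a $T$-algebra morphism is again a $T$-algebra morphism, hence lifts uniquely through $u_T$, and uniqueness in \cref{algebra-object-2-cell-UP} forces the lift to invert $\phi$. The paper's proof is simply terser, asserting in one sentence what you unpack into the three steps (that $(\phi \d u_T)$ is an algebra morphism, that its inverse is too, and that the lifted inverse inverts $\phi$); your additional detail on the last step is welcome, since the paper leaves it implicit.
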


\begin{proof}
    Let $f_1, f_2 \colon Z \rightrightarrows \Alg(T)$ be a parallel pair of tight-cells and let $\phi \colon f_1 \tto f_2$ be a 2-cell. Suppose that $(\phi \d u_T)$ admits an inverse. Since $(\phi \d u_T)$ is an algebra morphism, so is its inverse, so that the inverse factors uniquely through $u_T$ by the universal property of $\Alg(T)$.
\end{proof}

We shall mark with a prime ($'$) the non-strict variants of definitions and results corresponding to strict variants.

\begin{manualdefinition}{\ref*{colimit-creation}$'$}
    Let $p \colon Y \lto Z$ be a loose-cell, and let $f \colon Z \to W$ and $g \colon W \to X$ be tight-cells, such that the colimit $p \wc (f \d g)$ exists.
    The colimit $p \wc (f \d g)$ is \emph{non-strictly created by $g$} when
    \begin{enumerate}
        \item for every $p$-cocone $(w, \lambda')$ for $f$, the $p$-cocone $((w \d g), (\lambda' \d g))$ is the $p$-colimit of $(f \d g)$ if and only if $(w, \lambda')$ is the $p$-colimit of $f$;
        \item $W$ admits the colimit $p \wc f$.
    \end{enumerate}
\end{manualdefinition}

\begin{manualdefinition}{\ref*{limit-creation}$'$}
    Let $p \colon Y \lto Z$ be a loose-cell, and let $f \colon Y \to W$ and $g \colon W \to X$ be tight-cells, such that the limit $p \wl (f \d g)$ exists.
    The limit $p \wl (f \d g)$ is \emph{non-strictly created by $g$} when
    \begin{enumerate}
        \item for every $p$-cone $(w, \mu')$ for $f$, the $p$-cone $((w \d g), (\mu' \d g))$ is the $p$-limit of $(f \d g)$ if and only if $(w, \mu')$ is the $p$-limit of $f$;
        \item $W$ admits the limit $p \wl f$.
    \end{enumerate}
\end{manualdefinition}

As with strict creation, non-strict creation of (co)limits implies preservation. Conversely, preservation of (co)limits implies non-strict creation assuming conservativity.

\begin{lemma}
    \label{preservation-and-conservativity-implies-creation}
    Let $p \colon Y \lto Z$ be a loose-cell and $g \colon W \to X$ be a conservative tight-cell. For any tight-cell $f \colon Z \to W$, if $W$ admits a colimit $p \wc f$ and $g$ preserves it, then $g$ non-strictly creates the colimit $p \wc (f \d g)$. Similarly, for any tight-cell $f \colon Y \to W$, if $W$ admits a limit $p \wl f$ and $g$ preserves it, then $g$ non-strictly creates the limit $p \wl (f \d g)$.
\end{lemma}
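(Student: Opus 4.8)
The plan is to prove the statement for colimits and to obtain the statement for limits by duality: as recalled before \cref{limit-creation}, weighted limits in $\X$ are weighted colimits in the dual equipment $\X\co$, and the notions of preservation, conservativity, and non-strict creation all transfer along this duality, so that applying the colimit case in $\X\co$ yields the limit case in $\X$. For the colimit case, condition (2) of \cref{colimit-creation}$'$ is exactly the hypothesis that $W$ admits $p \wc f$, and the colimit $p \wc (f \d g)$ exists because $g$ preserves $p \wc f$; it thus remains to establish the biconditional of condition (1): for every $p$-cocone $(w, \lambda')$ for $f$, the image $((w \d g), (\lambda' \d g))$ is the colimit $p \wc (f \d g)$ if and only if $(w, \lambda')$ is the colimit $p \wc f$.

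The forward implication is preservation. If $(w, \lambda')$ is the colimit $p \wc f$, then, since weighted colimits are stable under isomorphism of cocones, it is isomorphic to the canonical colimit that $g$ preserves by hypothesis, and hence $((w \d g), (\lambda' \d g))$ is the colimit $p \wc (f \d g)$.

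For the reverse implication I would reflect the colimit using conservativity. Write $(c, \kappa)$ for the colimit $p \wc f$, which $W$ admits by hypothesis. The universal property of $(c, \kappa)$ supplies a unique comparison 2-cell $\phi \colon c \tto w$ of $p$-cocones for $f$. By the forward implication $((c \d g), (\kappa \d g))$ is the colimit $p \wc (f \d g)$, and by assumption $((w \d g), (\lambda' \d g))$ is too; by functoriality of whiskering by $g$, the image $\phi \d g \colon (c \d g) \tto (w \d g)$ is the induced comparison 2-cell between these two colimit cocones, and a comparison between two colimits is necessarily invertible. Conservativity of $g$ then forces $\phi$ itself to be invertible, so $(w, \lambda')$ is isomorphic as a $p$-cocone to the colimit $(c, \kappa)$, and is therefore itself the colimit $p \wc f$.

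I expect the only genuine work to be the formal bookkeeping underlying two of the steps above, rather than anything conceptual. First, that a weighted colimit is stable under isomorphism of cocones: this holds because its universal property is expressed as a pasting bijection, and pasting with an invertible 2-cell is again a bijection. Second, that the comparison 2-cell is natural with respect to whiskering by $g$, so that $\phi \d g$ really is the comparison between the two image cocones; this uses the canonical action of $g$ on the relevant restriction loose-cells together with the uniqueness clause of the colimit universal property. Both facts are routine in the virtual-equipment calculus, but must be checked in order to render the argument that preservation together with conservativity implies reflection fully precise.
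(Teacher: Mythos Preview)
Your proposal is correct and follows essentially the same approach as the paper: both argue via the comparison 2-cell $\phi \colon p \wc f \tto w$ and use conservativity to reflect invertibility of $\phi \d g$ back to $\phi$. The paper packages the two directions of condition~(1) into the single observation that $\phi$ is invertible if and only if $\phi \d g$ is, whereas you treat the forward direction (preservation) and the reverse direction (reflection via conservativity) separately, but the content is the same.
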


\begin{proof}
    Since $g$ is conservative, a 2-cell $\phi \colon p \wc f \tto w$ is invertible if and only if the 2-cell $(\phi \d g) \colon (p \wc f) \d g \iso p \wc (f \d g) \tto (w \d g)$ is invertible. Therefore, the $p$-cocone corresponding to $\phi$ is colimiting if and only if the $p$-cocone corresponding to $(\phi \d g)$ is colimiting. The limit case follows analogously.
\end{proof}

The non-strict analogues of \cref{u_T-creates-j-absolute-colimits,u_T-creates-limits} follow.

\begin{proposition}
    \label{u_T-nonstrict-creation}
    Let $\jAE$ be a tight-cell and let $T$ be $j$-monad admitting an algebra object $(u_T, \aop_T)$.
    The tight-cell $u_T \colon \Alg(T) \to E$ non-strictly creates limits and $j$-absolute colimits.
\end{proposition}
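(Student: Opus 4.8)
The plan is to obtain non-strict creation as a formal corollary of the strict creation results together with conservativity, routed through \cref{preservation-and-conservativity-implies-creation}. That lemma reduces non-strict creation to the conjunction of preservation and conservativity, so it suffices to check that $u_T$ enjoys both for the (co)limits in question, and these have essentially already been arranged by the preceding results.

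First I would invoke \cref{u_T-creates-j-absolute-colimits,u_T-creates-limits}, which establish that $u_T$ strictly creates $j$-absolute colimits and all limits. As recorded immediately after \cref{colimit-creation,limit-creation}, strict creation supplies exactly the two ingredients needed: on the one hand $\Alg(T)$ admits the lifted (co)limit $p \wc f$ (respectively $p \wl f$), and on the other hand $u_T$ preserves it. Conservativity of $u_T$ is then provided by \cref{u_T-is-conservative}. Feeding preservation and conservativity into \cref{preservation-and-conservativity-implies-creation} yields non-strict creation directly: given a $j$-absolute colimit $p \wc (f \d u_T)$ in $E$, \cref{u_T-creates-j-absolute-colimits} guarantees that $\Alg(T)$ admits $p \wc f$ and that $u_T$ preserves it, whence $u_T$ non-strictly creates $p \wc (f \d u_T)$; the argument for limits is identical, using \cref{u_T-creates-limits} in place of \cref{u_T-creates-j-absolute-colimits}.

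The only subtlety worth flagging is the bookkeeping of the existence clauses in the primed (non-strict) variants of \cref{colimit-creation,limit-creation}: these require not merely that the (co)limit in $E$ exist, but also that $W = \Alg(T)$ admit the lifted (co)limit. Rather than being assumed, this existence is furnished for free by strict creation, so the hypotheses of \cref{preservation-and-conservativity-implies-creation} are met without extra work. Consequently I do not anticipate any genuine obstacle; the proof should collapse to a single short paragraph chaining \cref{u_T-creates-j-absolute-colimits,u_T-creates-limits}, \cref{u_T-is-conservative}, and \cref{preservation-and-conservativity-implies-creation}, invoked separately for the colimit and the limit cases.
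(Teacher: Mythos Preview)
Your proposal is correct and matches the paper's proof exactly: the paper's argument is the single sentence ``Follows directly from \cref{u_T-creates-j-absolute-colimits,u_T-creates-limits,u_T-is-conservative,preservation-and-conservativity-implies-creation}.'' Your observation about the existence clause being supplied by strict creation is precisely the point, and there is nothing further to add.
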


\begin{proof}
    Follows directly from \cref{u_T-creates-j-absolute-colimits,u_T-creates-limits,u_T-is-conservative,preservation-and-conservativity-implies-creation}.
\end{proof}

\begin{remark}
    From \cref{u_T-nonstrict-creation}, we recover \cite[Theorem~7.2]{pare1971absolute} for non-relative monads in~$\Cat$.
\end{remark}

\section{Relative monadicity}
\label{monadicity}

In this section, we establish a relative monadicity theorem (\cref{relative-monadicity-theorem}(\hyperref[relative-monadicity-theorem']{$'$})) for relative monads with dense roots. We start by reviewing our approach to strictness.

The universal property of an algebra object (\cref{algebra-object}) identifies it up to isomorphism. Consequently, every object isomorphic to an algebra object is itself an algebra object for the same relative monad. This aligns with the general philosophy of \cite{arkor2024formal}, where we are concerned with \emph{categories} of relative monads, in which the appropriate notion of indistinguishability is isomorphism (rather than with 2-categories or \vdcs{} of relative monads, in which the appropriate notion of indistinguishability would be equivalence). Our primary interest in relative monadicity thus concerns \emph{strict} relative monadicity, characterising when a tight-cell $r \colon D \to E$ is isomorphic in $\tX/E$ to the forgetful tight-cell for a $j$-monad.

We therefore proceed to first establish a strict relative monadicity theorem, as it requires more care than the non-strict variant. Since a non-strict relative monadicity theorem is also of interest, we provide one in \cref{non-strict}. However, as the proof strategy is identical to the strict case, up to appropriate replacement of identities with isomorphisms, our treatment of the non-strict case is succinct.

We shall now outline our proof strategy, and discuss the subtleties that arise in the formal setting. Fix a tight-cell $\jAE$. Given a tight-cell $r \colon D \to E$, we should like to characterise when it exhibits the forgetful tight-cell for an algebra object for a $j$-monad. Assuming that algebra objects exist, we may reformulate this condition, using the universal property, into the following.
\begin{definition}
    \label{strictly-j-monadic}
    A tight-cell $r \colon D \to E$ is \emph{strictly $j$-relatively monadic} (alternatively \emph{strictly monadic relative to $j$}, or simply \emph{strictly $j$-monadic}) if it admits a left $j$-adjoint, the induced $j$-monad $T$ admits an algebra object, and the comparison tight-cell $\unit_{\ljr} \colon D \to \Alg(T)$ is an isomorphism.
    \[\begin{tikzcd}
        D && {\Alg(T)} \\
        & E
        \arrow["r"', from=1-1, to=2-2]
        \arrow["{\unit_{\ljr}}", from=1-1, to=1-3]
        \arrow["{u_T}", from=1-3, to=2-2]
    \end{tikzcd}\]
\end{definition}
Explicitly, this means that, to exhibit $r$ as the forgetful tight-cell for an algebra object, we must construct a tight-cell $\Alg(T) \to D$ that is inverse to the comparison tight-cell $\unit_{\ljr}$. To do so, we shall observe that $u_T \colon \Alg(T) \to E$ can be exhibited as a certain weighted colimit (\cref{right-morphism-exhibits-left-extension}). Supposing that $r$ creates this colimit, the lifting then provides the desired inverse. Furthermore, the weighted colimit in question will be shown to be $j$-absolute, and so creation of $j$-absolute colimits provides both a necessary and sufficient condition for $j$-monadicity, by \cref{u_T-creates-j-absolute-colimits}.

Note, however, that when the $j$-monad induced by $\ljr$ does not admit an algebra object, such an approach is not possible, as there is no comparison tight-cell that we may ask to be invertible.
The relative monadicity theorem we prove in this section is therefore a \emph{detection} result, rather than a \emph{construction} result: it characterises when a tight-cell is relatively monadic \emph{assuming that an algebra object exists}. This is not a trivial assumption even for enriched (non-relative) monadicity. However, it is also not a particularly restrictive assumption, as algebra objects typically exist in settings of interest (for instance, when the enriching category $\V$ is closed and admits enough limits \cite[Corollary~8.20]{arkor2024formal}).

\subsection{Strict relative monadicity}
\label{strict}

The first step towards a relative monadicity theorem is to identify the appropriate colimits for creation. To do so, we observe in the following proposition that, for dense $j$, a right-morphism of relative adjunctions $(\ljr) \to (\ljrp)$, exhibits $r'$ as a left extension of $r$ (\cf{}~\cite[Proposition~5.10]{arkor2024formal}).
In particular, this is true of morphisms of resolutions, in which the 2-cell $\rho$ below is the identity (\cite[Definition~5.23]{arkor2024formal}).

\begin{remark}
    \label{uniqueness-of-rho}
    Recall that, in general, a right-morphism $(c, \rho) \colon (\ljr) \to (\ljrp)$ comprises a tight-cell $c \colon C \to C'$ commuting with the left relative adjoints, and a 2-cell $\rho \colon r \tto (c \d r')$ compatible with the transposition operators~\cite[Definition~5.18]{arkor2024formal}. However, when $j$ is dense, the 2-cell $\rho$ is uniquely determined by the transposition operators~\cite[Lemma~5.21]{arkor2024formal}. This is the case in the following proposition.
\end{remark}

\begin{proposition}
    \label{right-morphism-exhibits-left-extension}
    Let $\jAE$ be a dense tight-cell and let $\ljr$ be a $j$-adjunction. Consider tight-cells $c \colon C \to C'$ and $r' \colon C' \to E$, and a 2-cell $\rho \colon r \tto (c \d r')$. Define $\ell' \defeq (\ell \d c)$.
	\[\begin{tikzcd}
		& {C'} \\
		A & C & E
		\arrow["\ell"', from=2-1, to=2-2]
		\arrow["r"', from=2-2, to=2-3]
		\arrow["{\ell'}", from=2-1, to=1-2]
		\arrow["c"{description}, from=2-2, to=1-2]
		\arrow[""{name=0, anchor=center, inner sep=0}, "{r'}", from=1-2, to=2-3]
		\arrow["\rho"', shorten >=3pt, Rightarrow, from=2-2, to=0]
	\end{tikzcd}\]
    The following are equivalent.
    \begin{enumerate}
        \item $\ljrp$, and $(c, \rho) \colon (\ljr) \to (\ljrp)$ is a right-morphism of $j$-adjunctions.
        \item $\rho$ exhibits $r'$ as the $j$-absolute left extension $c \plx r$.
    \end{enumerate}
\end{proposition}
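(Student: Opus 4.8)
The plan is to show that both conditions are equivalent to the existence of a single coherent isomorphism $E(j, r') \iso C'(\ell', 1)$ playing the role of the transposition operator for the putative $j$-adjunction $\ljrp$, and to pass between the colimit formulation in (2) and the adjunction formulation in (1) by rewriting the nerve $E(j, r')$. The backbone of the argument is the chain of isomorphisms
\[ E(j, r') \iso E(j, r) \odotl C'(c, 1) \iso C(\ell, 1) \odot C'(c, 1) \iso C'(\ell \d c, 1) = C'(\ell', 1), \]
where the first isomorphism is the invertibility of the canonical comparison 2-cell witnessing $j$-absoluteness (\cref{j-absolute}, since $c \plx r$ is by definition the colimit $C'(c,1) \wc r$); the second is the transposition $\sharp \colon C(\ell, 1) \iso E(j, r)$ of the given $j$-adjunction $\ljr$ (\cref{relative-adjunction}); and the third expresses that restrictions of loose-identities compose, reducing $C(\ell,1) \odot C'(c,1)$ to $C'(\ell \d c, 1)$.

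For the implication (2) $\Rightarrow$ (1), I would start from a $j$-absolute left extension, so that $\rho$ exhibits $(r', \lambda_\rho)$ as the weighted colimit $C'(c,1) \wc r = c \plx r$. Reading off the chain above yields an isomorphism $C'(\ell', 1) \iso E(j, r')$, and I would then verify that it satisfies the transposition axioms of \cref{relative-adjunction}, exhibiting $\ljrp$ as a genuine $j$-adjunction. The equality $\ell' = (\ell \d c)$ holds by definition, and the compatibility of $\rho$ with the two transpositions — hence the right-morphism axiom — is forced, using that $j$ is dense so that $\rho$ is uniquely determined (\cref{uniqueness-of-rho}); at this point I would invoke the general correspondence between right-morphisms of $j$-adjunctions and left extensions (\cf \cite[Proposition~5.10]{arkor2024formal}).

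For the converse (1) $\Rightarrow$ (2), I would begin with the transposition $C'(\ell', 1) \iso E(j, r')$ of the $j$-adjunction $\ljrp$ and run the chain in reverse: composing the restriction identity $C'(\ell', 1) \iso C(\ell, 1) \odot C'(c, 1)$ with the transposition $C(\ell, 1) \iso E(j, r)$ of $\ljr$ produces $E(j, r') \iso E(j, r) \odotl C'(c, 1)$. Because $E(j, r) \iso C(\ell, 1)$ is a representable loose-cell, the left-composite $E(j, r) \odotl C'(c, 1)$ exists and coincides with the ordinary composite, so the isomorphism obtained is the canonical comparison. Density of $j$ then lets me reflect this nerve-level identification back into $E$, concluding simultaneously that $\rho$ exhibits $r'$ as the left extension $c \plx r$ and that this left extension is $j$-absolute.

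The main obstacle I anticipate is the bookkeeping distinguishing the composite $\odot$ from the left-composite $\odotl$, and in particular checking that the isomorphism delivered by the chain really is the canonical comparison 2-cell of \cref{j-absolute} rather than merely some abstract isomorphism — this is exactly what upgrades ``$r'$ is a left extension'' to ``$r'$ is a \emph{$j$-absolute} left extension''. Establishing the restriction-composition isomorphism $C(\ell, 1) \odot C'(c, 1) \iso C'(\ell', 1)$ and confirming its compatibility with the transposition operators $\sharp$ and $\flat$ (so that the resulting transposition is natural and the right-morphism axiom is satisfied) will be most cleanly carried out in the string-diagram calculus. The density hypothesis on $j$ is essential throughout: it pins down $\rho$ uniquely and, crucially, allows nerve-level isomorphisms to be reflected into genuine universal properties in $E$.
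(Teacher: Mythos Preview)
Your proposal is correct and follows essentially the same route as the paper's proof: both arguments pivot on the chain $C'(\ell',1) \iso C(\ell,1)\odot C'(c,1) \iso E(j,r)\odot C'(c,1)$ and then, in each direction, match this against $E(j,r')$ using either the transposition of $\ljrp$ or the $j$-absoluteness of $c\plx r$. The only cosmetic difference is that the paper packages your ``density lets me reflect the nerve-level identification back into $E$'' step as a single citation to \cite[Lemma~3.23]{arkor2024formal}, which simultaneously handles the colimit/left-extension characterisation and the canonicity of the comparison 2-cell you flagged as the main bookkeeping obstacle.
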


\begin{proof}
    Observe that, since $\ljr$, the loose-composite $E(j, r) \odot C'(c, 1)$ is isomorphic to $C'(\ell', 1)$:
    \[C'(\ell', 1) = C'(c \ell, 1) \iso C(\ell, 1) \odot C'(c, 1) \iso E(j, r) \odot C'(c, 1)\]

    (1) $\implies$ (2). Since $\ljrp$, we have
    \[E(j, r') \iso C'(\ell', 1) \iso E(j, r) \odot C'(c, 1)\]
    from which the result follows by \cite[Lemma~3.23]{arkor2024formal} using that $j$ is dense.

    (2) $\implies$ (1). We have
    \[C'(\ell', 1) \iso E(j, r) \odot C'(c, 1) \iso E(j, c \plx r)\]
    using $j$-absoluteness and \cite[Lemma~3.23]{arkor2024formal}, so that $\ljrp$. That $(c, \rho)$ forms a right-morphism follows from the definition of $\ljrp$.
\end{proof}

Since the left extensions of \cref{right-morphism-exhibits-left-extension} are $j$-absolute, they are created by the forgetful tight-cell $u_T \colon \Alg(T) \to E$ of any algebra object for a $j$-monad. We shall show shortly that creation of such colimits is sufficient to imply $j$-monadicity. The observation above motivates the following definition.

\begin{definition}
    Let $r \colon D \to E$ be tight-cell. An \emph{$r$-extension} is a left extension $c \plx r$, for some tight-cell $c$ with domain $D$.
\end{definition}

The core technical lemma in the proof of the relative monadicity theorem is the following.

\begin{lemma}
    \label{split-resolution-morphisms}
    Let $\jAE$ be a dense tight-cell, and let $\ell \jadj r$ be a resolution of a $j$-monad $T$. If $r$ strictly creates $j$-absolute $r$-extensions, then every morphism of resolutions with domain $(\ell \jadj r)$ admits a retraction $c \plx 1_C$.
\end{lemma}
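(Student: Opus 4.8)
The plan is to realise the claimed retraction $c \plx 1_C$ as a weighted colimit that $r$ creates, and then to read off the retraction equation from the defining condition of a morphism of resolutions. Write the given morphism of resolutions as $(c, \rho) \colon (\ljr) \to (\ljrp)$, so that $\ell' = \ell \d c$; since it is a morphism of \emph{resolutions}, the 2-cell $\rho$ is the identity, which is to say $r = c \d r'$. As $j$ is dense, \cref{right-morphism-exhibits-left-extension} applies and shows that $\rho$ exhibits $r'$ as the $j$-absolute left extension $c \plx r$. In particular, $c \plx r$ is a $j$-absolute $r$-extension, to which the hypothesis on $r$ applies.

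The first step is to lift this extension through $r$. Writing $c \plx r = C'(c, 1) \wc (1_C \d r)$ and applying strict creation (\cref{colimit-creation}) with $g \defeq r$, I obtain a tight-cell $w \defeq c \plx 1_C \colon C' \to C$ together with a cocone $\lambda'$, characterised uniquely by $w \d r = r'$ and $\lambda' \d r = \lambda$, where $\lambda$ is the colimit cocone of $c \plx r = r'$. By clause (2) of \cref{colimit-creation}, the pair $(w, \lambda')$ is genuinely the weighted colimit $c \plx 1_C$ in $C$; this is the candidate retraction.

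It then remains to verify $c \d w = 1_C$. After post-composing with $r$ this is immediate: $(c \d w) \d r = c \d (w \d r) = c \d r' = r = 1_C \d r$, using $w \d r = r'$ together with $r = c \d r'$. To remove $r$, I would track the unit $\iota \colon 1_C \tto c \d w$ of the left extension $w = c \plx 1_C$, which corresponds to the colimit cocone $\lambda'$. Strict creation gives $\lambda' \d r = \lambda$, and $\lambda$ corresponds to the exhibiting 2-cell $\rho$ of $c \plx r = r'$; since $\rho$ is the identity, we obtain $\iota \d r = \mathrm{id}_r$.

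The main obstacle is the final step: upgrading $\iota \d r = \mathrm{id}_r$ to $\iota = \mathrm{id}$, equivalently $c \d w = 1_C$. I expect to do this using strictness rather than any conservativity of $r$ (which is not available for an arbitrary resolution): because the colimit $c \plx 1_C$ is created \emph{strictly}, its universal datum is reflected on the nose by $r$, so that the identity cocone on $r'$ admits a unique lift, forcing $\iota$ to be the identity 2-cell and hence $c \d w = 1_C$. Note that density of $j$ is used only through \cref{right-morphism-exhibits-left-extension}, to guarantee that the extension being created is $j$-absolute and therefore falls within the class that $r$ creates.
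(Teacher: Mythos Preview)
Your argument is correct up to and including the construction of $w = c \plx 1_C$ with $w \d r = r'$ and $\iota \d r = 1_r$; this is exactly the paper's construction of its tight-cell $d$ and 2-cell $\eta$. The gap is in your final paragraph. You propose to deduce $c \d w = 1_C$ from the strict creation of the colimit $c \plx 1_C$. But that creation statement only gives uniqueness of pairs $(w', \lambda')$ lifting the cocone for $c \plx r = r'$; it says nothing about the composite $c \d w$, nor does it force $\iota$ to be an identity simply because $\iota \d r$ is. The phrase ``the identity cocone on $r'$ admits a unique lift'' conflates two different colimits: the cocone whose uniqueness you have access to is the $C'(c,1)$-weighted one, whereas the claim $c \d w = 1_C$ concerns a $C(1_C,1)$-weighted cocone.

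The missing idea is to invoke strict creation of a \emph{second} $r$-extension, namely the trivial one $1_C \plx r$. The identity 2-cell $1_r$ exhibits $r$ as the $j$-absolute $r$-extension $1_C \plx r$, so by hypothesis this extension is also strictly created by $r$: there is a unique pair $(x,\chi)$ with $x \colon C \to C$, $\chi \colon 1_C \tto x$, $x \d r = r$, and $\chi \d r = 1_r$. Now observe that both $(1_C, 1_{1_C})$ and $(c \d w, \iota)$ satisfy these equations (the latter using $c \d r' = r$), so uniqueness forces $c \d w = 1_C$ and $\iota = 1_{1_C}$. This is how the paper closes the argument; without this second application of creation, the proof is incomplete.
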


\begin{proof}
  Let $c \colon (\ell \jadj r) \to (\ell' \jadj r')$ be a morphism of resolutions of $T$.
\[
\begin{tikzcd}
	& {C'} \\
	A & C & E
	\arrow["\ell"', from=2-1, to=2-2]
	\arrow["r"', from=2-2, to=2-3]
	\arrow["{\ell'}", from=2-1, to=1-2]
	\arrow["c"{description}, from=2-2, to=1-2]
	\arrow["{r'}", from=1-2, to=2-3]
\end{tikzcd}
\]
  By \cref{right-morphism-exhibits-left-extension}, using density of $j$, we have $r' \iso c \plx r$.
  This left extension is a $j$-absolute $r$-extension, so is strictly created by $r$.
  Hence there is a unique pair of a tight-cell $d \colon C' \to C$ and 2-cell
  $\eta \colon 1_C \tto c \d d$ such that $d \d r = r'$ and
  $\eta \d r = 1_r$. Furthermore, $\eta$ exhibits $d$ as the left extension $c \plx 1_C$.
  We shall prove that $c \d d = 1_C$: this implies that $d$ is a morphism of resolutions, and hence is a retraction of $c$ in the category of resolutions of $T$, because $\ell \d c = \ell'$.

  The identity 2-cell $1_r$ exhibits $r$ as the $j$-absolute $r$-extension $1_{C} \plx r$ trivially.
  This left extension is strictly created by $r$, so there is a unique pair $(x, \chi)$ of a tight-cell $x \colon C \to C$ and 2-cell $\chi \colon 1_C \tto x$ such that $x \d r = r$ and $\chi \d r = 1_r$.
  Clearly $(1_C, 1_{1_C})$ is such a pair, but, by the above, and the fact that $c \d r' = r$, so is $((c \d d), \eta)$.
  Hence $c \d d = 1_C$ as required.
\end{proof}

In passing, we note that, when algebra objects exist, a relative adjunction is a terminal resolution if and only if its right relative adjoint exhibits an algebra object~\cite[Corollary~6.41]{arkor2024formal}. However, if algebra objects are not known to exist, terminality is a weaker condition than exhibiting an algebra object. The following corollary of \cref{split-resolution-morphisms} is useful when terminality is sufficient. One could use this fact to give a proof of the relative monadicity theorem; however, we shall give a different proof that generalises more easily to the non-strict setting.

\begin{corollary}
    \label{creates-j-absolute-colimits-implies-strictly-j-monadic}
    Let $\jAE$ be a dense tight-cell, let $T$ be a $j$-monad admitting a terminal resolution $f \jadj u$, and let $\ljr$ be a resolution of $T$. If $r$ strictly creates $j$-absolute $r$-extensions, then $\ljr$ is a terminal resolution of $T$.
    \[\begin{tikzcd}
    	D && M \\
    	& E
    	\arrow["r"', from=1-1, to=2-2]
    	\arrow["u", from=1-3, to=2-2]
    	\arrow["{\unit_{\ljr}}", from=1-1, to=1-3]
    \end{tikzcd}\]
\end{corollary}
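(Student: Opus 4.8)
The plan is to leverage the terminal resolution $f \jadj u$ that is assumed to exist, together with \cref{split-resolution-morphisms}, and show that $\ljr$ enjoys the same universal property. The guiding principle is the elementary categorical fact that, in any category possessing a terminal object $t$, an object $x$ whose unique morphism $x \to t$ is a split monomorphism is itself terminal; I shall apply this in the category of resolutions of $T$.

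First I would invoke the terminality of $f \jadj u$ to obtain the comparison morphism of resolutions $\unit_{\ljr} \colon (\ljr) \to (f \jadj u)$, whose underlying tight-cell $D \to M$ is the map displayed in the statement. Since $\ljr$ is a resolution of $T$ whose right relative adjoint $r$ strictly creates $j$-absolute $r$-extensions, \cref{split-resolution-morphisms} applies to this very morphism and furnishes a retraction $d \colon (f \jadj u) \to (\ljr)$ -- itself a morphism of resolutions -- satisfying $\unit_{\ljr} \d d = 1_D$. In particular $\unit_{\ljr}$ is a split monomorphism, and hence a monomorphism, in the category of resolutions of $T$.

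It then remains to verify the two halves of terminality. For existence, given any resolution $S$ of $T$, terminality of $f \jadj u$ supplies a morphism $S \to (f \jadj u)$, and post-composing with $d$ yields a morphism of resolutions $S \to (\ljr)$. For uniqueness, suppose $g_1, g_2 \colon S \to (\ljr)$ are morphisms of resolutions; then $g_1 \d \unit_{\ljr}$ and $g_2 \d \unit_{\ljr}$ are both morphisms $S \to (f \jadj u)$, and so coincide by terminality of $f \jadj u$. As $\unit_{\ljr}$ is monic, we may cancel it to conclude $g_1 = g_2$. This establishes that $\ljr$ is a terminal resolution (and, incidentally, that $\unit_{\ljr}$ is an isomorphism, two terminal objects being uniquely isomorphic).

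I do not anticipate a genuine obstacle: the substantive work has already been discharged in \cref{split-resolution-morphisms}, and the corollary is a formal deduction from it. The only points demanding care are bookkeeping ones -- confirming that $\unit_{\ljr}$ is indeed a morphism of resolutions, so that the lemma applies, and that the retraction $d$ it produces is again a morphism of resolutions, so that the composites above remain within the category of resolutions -- but both are guaranteed by the cited results, and the argument thereby reduces to the split-monomorphism observation above.
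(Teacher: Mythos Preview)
Your proposal is correct and follows essentially the same approach as the paper: invoke terminality to obtain the comparison morphism, apply \cref{split-resolution-morphisms} to split it, and conclude. The only difference is cosmetic --- the paper cites the elementary fact that a split monomorphism into a terminal object is an isomorphism, whereas you unfold that fact into separate existence and uniqueness arguments.
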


\begin{proof}
    Since $(f \jadj u)$ is terminal, there is a unique morphism $(\ell \jadj r) \to (f \jadj u)$. By \cref{split-resolution-morphisms}, this morphism is a split monomorphism, because $r$ strictly creates $j$-absolute $r$-extensions. However, every split monomorphism into a terminal object is an isomorphism, so $(\ell \jadj r) \iso (f \jadj u)$.
\end{proof}

The relative monadicity theorem follows essentially directly from \cref{u_T-creates-j-absolute-colimits,split-resolution-morphisms}.

\begin{theorem}[Relative monadicity]
    \label{relative-monadicity-theorem}
    Let $\jAE$ be a dense tight-cell. A tight-cell $r \colon D \to E$ is strictly $j$-monadic if and only if $r$ admits a left $j$-adjoint, the induced $j$-monad admits an algebra object, and $r$ strictly creates $j$-absolute $r$-extensions.
\end{theorem}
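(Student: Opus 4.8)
The plan is to exploit the fact that two of the three conditions defining strict $j$-monadicity in \cref{strictly-j-monadic} --- that $r$ admits a left $j$-adjoint, and that the induced $j$-monad $T$ admits an algebra object --- appear verbatim on the right-hand side of the stated equivalence. Assuming these common hypotheses, we obtain a resolution $\ljr$ of $T$, the algebra object $f_T \jadj u_T$, and the comparison tight-cell $\unit_{\ljr} \colon D \to \Alg(T)$; it then remains only to prove that $\unit_{\ljr}$ is invertible if and only if $r$ strictly creates $j$-absolute $r$-extensions. Thus the theorem reduces to a single biconditional over these shared assumptions.

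For the forward implication, suppose $\unit_{\ljr}$ is an isomorphism, so that $r$ is isomorphic to $u_T$ in $\tX/E$. By \cref{u_T-creates-j-absolute-colimits}, $u_T$ strictly creates \emph{all} $j$-absolute colimits, and every $j$-absolute $r$-extension is in particular such a colimit (namely a weighted colimit of $r$). It therefore suffices to observe that strict creation of $j$-absolute colimits is invariant under isomorphism over $E$: given a lift through $u_T$ of a $j$-absolute colimit, postcomposing with $\unit_{\ljr}\inv$ yields the required lift through $r$, and invertibility of $\unit_{\ljr}$ transports both the uniqueness clause and the property of being the weighted colimit. Hence $r$ strictly creates $j$-absolute $r$-extensions.

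For the converse, suppose $r$ strictly creates $j$-absolute $r$-extensions. The algebra object exhibits $f_T \jadj u_T$ as a \emph{terminal} resolution of $T$ \cite[Corollary~6.41]{arkor2024formal}, and $\unit_{\ljr}$ is by definition the unique morphism of resolutions $(\ljr) \to (f_T \jadj u_T)$. By \cref{split-resolution-morphisms}, every morphism of resolutions with domain $\ljr$ admits a retraction, so $\unit_{\ljr}$ is a split monomorphism; since a split monomorphism into a terminal object is necessarily an isomorphism, $\unit_{\ljr}$ is invertible, and $r$ is strictly $j$-monadic. (This is precisely the argument packaged by \cref{creates-j-absolute-colimits-implies-strictly-j-monadic}.)

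The substance of the theorem is carried entirely by the two results cited above: \cref{u_T-creates-j-absolute-colimits} supplies the creation property in the forward direction, and \cref{split-resolution-morphisms} supplies the retraction in the converse. Consequently no genuine obstacle remains at this stage. The one point requiring care --- and the step I would state most explicitly --- is the transfer of strict creation along the slice isomorphism $\unit_{\ljr}$ in the forward direction; this is routine, but it is where the correspondence between creation by $u_T$ and creation by $r$ is actually established, and it is worth verifying that both the existence and uniqueness conditions of \cref{colimit-creation} survive the transfer.
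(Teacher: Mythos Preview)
Your proof is correct, and the forward direction matches the paper's argument exactly. The converse, however, takes a genuinely different route. You apply \cref{split-resolution-morphisms} once to exhibit $\unit_{\ljr}$ as a split monomorphism, and then invoke the fact that a split monomorphism into a terminal object is invertible --- exactly the argument packaged by \cref{creates-j-absolute-colimits-implies-strictly-j-monadic}, as you note. The paper instead applies \cref{split-resolution-morphisms} \emph{twice}: once to $r$ (yielding a retraction $\unit_{\ljr} \plx 1_D$ of $\unit_{\ljr}$) and once to $u_T$ (yielding a retraction of $\unit_{\ljr} \plx 1_D$), so that $D$ and $\Alg(T)$ are mutual retracts and $\unit_{\ljr}$ is therefore invertible.

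The paper explicitly acknowledges your route as a valid alternative (see the paragraph preceding \cref{creates-j-absolute-colimits-implies-strictly-j-monadic}), but chooses the symmetric argument because it adapts directly to the non-strict setting of \cref{relative-monadicity-theorem'}: there the comparison and its pseudo-inverse only pseudo-commute with the relative adjoints, so one no longer has a morphism of resolutions in the strict sense, and the terminal-object trick does not apply cleanly. The mutual-retract argument, by contrast, goes through verbatim with ``retract'' replaced by ``pseudo-retract'' via \cref{split-resolution-morphisms'}. Your approach is slightly slicker for the strict theorem alone; the paper's buys a uniform proof of both versions.
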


\begin{proof}
    If $r$ is strictly $j$-monadic, then it admits a left $j$-adjoint for which the induced $j$-monad has an algebra object, by definition. Furthermore, $r$ is the composite of an isomorphism with $u_T$, hence strictly creates $j$-absolute $r$-extensions by \cref{u_T-creates-j-absolute-colimits}. For the converse, observe that $r$ and $u_T$ both strictly create $j$-absolute $r$-extensions, so that $\unit_{\ljr}$ and $\unit_{\ljr} \plx 1_D$ exhibit $D$ and $\Alg(T)$ as retracts of one another by \cref{split-resolution-morphisms}, and hence $\unit_{\ljr}$ is invertible.
\end{proof}

We may additionally relax the class of colimits that $r$ need create in order to be $j$-monadic, so that the class is independent of $r$.

\begin{corollary}
    \label{relative-monadicity-theorem-absolute}
    Let $\jAE$ be a dense tight-cell. A tight-cell $r \colon D \to E$ is strictly $j$-monadic if and only if $r$ admits a left $j$-adjoint, the induced $j$-monad admits an algebra object, and $r$ strictly creates $j$-absolute colimits.
\end{corollary}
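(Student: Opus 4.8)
The plan is to deduce this directly from the relative monadicity theorem (\cref{relative-monadicity-theorem}), which differs only in referring to creation of $j$-absolute $r$-extensions in place of all $j$-absolute colimits. The conceptual point is that an $r$-extension $c \plx r$ is a particular weighted colimit, and the ones arising in \cref{right-morphism-exhibits-left-extension} are $j$-absolute; hence strict creation of all $j$-absolute colimits is a priori a stronger requirement than strict creation of $j$-absolute $r$-extensions. The two conditions must therefore be reconciled differently in the two directions of the proof.

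For the forward implication, I would suppose $r$ is strictly $j$-monadic. By \cref{strictly-j-monadic} it then admits a left $j$-adjoint whose induced $j$-monad $T$ admits an algebra object, so two of the three conditions hold immediately. For the third, I would use that the comparison tight-cell $\unit_{\ljr} \colon D \to \Alg(T)$ is an isomorphism in $\tX/E$, so that $r = \unit_{\ljr} \d u_T$ is the composite of this isomorphism with $u_T$. Since $u_T$ strictly creates $j$-absolute colimits by \cref{u_T-creates-j-absolute-colimits}, and strict creation transports across precomposition with an isomorphism over $E$ (one transfers a created colimit of $f' \d r = (f' \d \unit_{\ljr}) \d u_T$ through $\unit_{\ljr}\inv$), it follows that $r$ strictly creates all $j$-absolute colimits.

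For the converse, the hypothesis is, if anything, more than \cref{relative-monadicity-theorem} requires: every $j$-absolute $r$-extension is in particular a $j$-absolute colimit, so a tight-cell that strictly creates all $j$-absolute colimits certainly strictly creates $j$-absolute $r$-extensions. Granting the remaining two hypotheses -- a left $j$-adjoint and an algebra object for the induced $j$-monad -- \cref{relative-monadicity-theorem} then yields that $r$ is strictly $j$-monadic.

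There is no substantial obstacle here; the proof is essentially a matter of comparing the two classes of colimits. The only points requiring care are the bookkeeping observations that $r$-extensions are genuinely $j$-absolute weighted colimits (so that they lie within the scope of the hypothesis used in the converse) and that strict creation is stable under composition with the isomorphism $\unit_{\ljr}$ (used in the forward direction). Both are routine consequences of \cref{right-morphism-exhibits-left-extension,u_T-creates-j-absolute-colimits}.
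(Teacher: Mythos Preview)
Your proposal is correct and follows essentially the same approach as the paper's proof, which is simply: ``Follows directly from \cref{relative-monadicity-theorem}, since $u_T$ strictly creates all $j$-absolute colimits by \cref{u_T-creates-j-absolute-colimits}.'' You have spelled out the two directions -- that strict creation of all $j$-absolute colimits implies strict creation of $j$-absolute $r$-extensions, and that strict $j$-monadicity implies strict creation of all $j$-absolute colimits via the isomorphism with $u_T$ -- exactly as the paper intends.
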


\begin{proof}
    Follows directly from \cref{relative-monadicity-theorem}, since $u_T$ strictly creates all $j$-absolute colimits by \cref{u_T-creates-j-absolute-colimits}.
\end{proof}

In practice, \cref{relative-monadicity-theorem-absolute} is often more convenient: for instance, in \cref{relative-monadicity-in-VCat}, we shall give a simple characterisation of the $j$-absolute colimits in $\VCat$ for a well-behaved monoidal category $\V$. However, in \cref{monadic-iff-left-adjoint} we shall give an example of a situation in which the sharper result of \cref{relative-monadicity-theorem} is useful.

\begin{remark}
    The assumption that $j$ be dense in the statement of \cref{relative-monadicity-theorem} is necessary. For instance, denoting by $0$ the empty category, every functor $r \colon D \to E$ (for arbitrary categories $D$ and $E$) is right adjoint to the unique functor $[]_D \colon 0 \to D$ relative to the unique functor $[]_E \colon 0 \to E$, which is dense only when $E$ is indiscrete.
    \[
    \begin{tikzcd}
        & D \\
        0 && E
        \arrow[""{name=0, anchor=center, inner sep=0}, "{[]_D}", from=2-1, to=1-2]
        \arrow[""{name=1, anchor=center, inner sep=0}, "r", from=1-2, to=2-3]
        \arrow["{[]_E}"', from=2-1, to=2-3]
        \arrow["\dashv"{anchor=center}, shift right=2, draw=none, from=0, to=1]
    \end{tikzcd}
  \]
  The trivial $[]_E$-monad is the unique $[]_E$-monad. Consequently, $r \colon D \to E$ is $[]_E$-monadic if and only if it is an isomorphism \cite[Proposition~6.42]{arkor2024formal}. However, every colimit in $E$ is $[]_E$-absolute~(by \cref{j-absolute-is-preservation-by-n_j} below), and so any non-invertible functor $r \colon D \to E$ that strictly creates colimits is a counterexample to the statement of \cref{relative-monadicity-theorem}. We leave as an open question whether there are sufficient conditions for relative monadicity in the absence of density.
\end{remark}

\begin{remark}
    For each tight-cell $\jAE$, the category of $j$-monads is equipped with a \ff{} partial functor $u_{\ph} \colon \RMnd(j)\op \ffto \tX/E$ to the category of slices over $E$, sending each relative monad $T$ that admits an algebra object to the forgetful tight-cell $u_T \colon \Alg(T) \to E$ from its algebra object~\cite[Corollary~6.40]{arkor2024formal}. Abstractly, the relative monadicity theorem may be seen as a characterisation of the essential image of this partial functor, consequently exhibiting an equivalence between the full subcategory of $\RMnd(j)\op$ spanned by $j$-monads admitting algebra objects, and the full subcategory of $\tX/E$ spanned by $j$-monadic tight-cells.
\end{remark}

\begin{remark}
    Algebra objects for monads in a 2-category (which are weaker than algebra objects for monads in an equipment~\cite[61]{arkor2024formal}) can be characterised representably in terms of algebra objects for monads in the 2-category $\b{Cat}$~\cite[Theorem~8]{street1972formal}. Consequently, monadicity for monads in a 2-category~$\K$ may be characterised representably in terms of monadicity in $\b{Cat}$~\cite[Corollary~8.1]{street1972formal}. \textcite[Proposition~22]{wood1985proarrows} makes use of this characterisation to give a formal monadicity theorem, which is, in essence, an objectwise version of \citeauthor{beck1966untitled}'s monadicity theorem~\cite{beck1966untitled}. \citeauthor{wood1985proarrows}'s monadicity theorem is thus of an entirely different nature to \cref{relative-monadicity-theorem}, which is a characterisation internal to the equipment $\X$.
    A consequence is that \cref{relative-monadicity-theorem}, in contrast to \citeauthor{wood1985proarrows}'s monadicity theorem, directly specialises to the monadicity theorem for enriched monads.
\end{remark}

A particularly useful consequence of the relative monadicity theorem is the following proposition, which relates algebra objects for relative monads that have different roots. Observe that the proof follows easily from \cref{relative-monadicity-theorem}, whereas a proof based on \cref{relative-monadicity-theorem-absolute} is less straightforward.

\begin{proposition}
    \label{monadic-iff-left-adjoint}
    Let $\jAE$ and $\jEI$ be tight-cells, and let $T$ be a $(j \d j')$-monad admitting an algebra object. Suppose that $j'$ is dense. Then $u_T \colon \Alg(T) \to E'$ is strictly $j'$-monadic if and only if it admits a left $j'$-adjoint and the induced $j'$-monad admits an algebra object.
\end{proposition}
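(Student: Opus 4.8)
The forward implication is immediate from \cref{strictly-j-monadic}: a strictly $j'$-monadic tight-cell admits a left $j'$-adjoint and has an algebra object for its induced $j'$-monad by definition. The content is the converse, for which the plan is to apply the relative monadicity theorem \cref{relative-monadicity-theorem}, with root $j'$, to the tight-cell $u_T \colon \Alg(T) \to E'$; this is legitimate since $j'$ is dense by hypothesis. Two of the three conditions it requires — that $u_T$ admit a left $j'$-adjoint, and that the induced $j'$-monad admit an algebra object — are precisely the standing hypotheses. As flagged just before the statement, I would invoke \cref{relative-monadicity-theorem} rather than \cref{relative-monadicity-theorem-absolute}, leaving as the sole remaining obligation that $u_T$ strictly create $j'$-absolute $u_T$-extensions.

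To discharge this, I would exploit that $u_T$ is also the forgetful tight-cell of the $(j \d j')$-monad $T$. By \cref{u_T-creates-j-absolute-colimits}, applied with root $j \d j'$, the tight-cell $u_T$ strictly creates every $(j \d j')$-absolute colimit. It therefore suffices to prove that every $j'$-absolute colimit in $E'$ — in particular every $j'$-absolute $u_T$-extension — is $(j \d j')$-absolute. Granting this, each $j'$-absolute $u_T$-extension is a $(j \d j')$-absolute colimit, hence strictly created by $u_T$, which verifies the final condition and completes the converse.

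The crux, and the step I expect to require the most care, is the implication $j'$-absolute $\implies$ $(j \d j')$-absolute. The weight governing $(j \d j')$-absoluteness is the restriction along $j$ of that governing $j'$-absoluteness: by functoriality of restriction, $E'(j \d j', f) \iso E'(j', f)(j, 1)$ for every $f$ into $E'$. The technical fact to establish is that restriction along $j$ is stable under the relevant left-composites, so that whenever $E'(j', f) \odotl p$ exists, so does $E'(j \d j', f) \odotl p$, with $E'(j \d j', f) \odotl p \iso \big(E'(j', f) \odotl p\big)(j, 1)$. Restricting the invertible comparison $E'(j', f) \odotl p \to E'(j', p \wc f)$ that witnesses $j'$-absoluteness then yields the invertible comparison witnessing $(j \d j')$-absoluteness. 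In the enriched setting of \cref{relative-monadicity-in-VCat} this is transparent: the nerve for $j \d j'$ factors as the nerve for $j'$ followed by restriction along $j$, which is cocontinuous and hence preserves the colimit; formally, the same phenomenon is the stability of left-composites under restriction.
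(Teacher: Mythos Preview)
Your overall strategy is exactly the paper's: apply \cref{relative-monadicity-theorem} with root $j'$, reduce to the claim that $u_T$ strictly creates $j'$-absolute $u_T$-extensions, and deduce this from \cref{u_T-creates-j-absolute-colimits} (applied with root $j \d j'$) together with the implication ``$j'$-absolute $\implies$ $(j \d j')$-absolute'' for the relevant colimits.

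The only point of divergence is how you justify that implication. You argue directly from \cref{j-absolute}, reducing to a stability statement: restriction along $j$ preserves left-composites, so that $E'(j \d j', f) \odotl p \iso \big(E'(j', f) \odotl p\big)(j,1)$, and restricting the invertible comparison gives the desired one. This is correct, and, as you note, transparent in the enriched case; in the equipment it amounts to checking that pasting the cartesian restriction cell with the left-opcartesian cell for $E'(j', f) \odotl p$ yields a left-opcartesian cell for the restricted composite. The paper instead avoids this auxiliary lemma entirely by invoking \cref{right-morphism-exhibits-left-extension}: a $j'$-absolute $u_T$-extension is equivalently a right-morphism of $j'$-adjunctions, and precomposing with $j$ (\cite[Proposition~5.29]{arkor2024formal}) turns any such right-morphism into a right-morphism of $(j \d j')$-adjunctions, which, again by \cref{right-morphism-exhibits-left-extension}, is equivalently a $(j \d j')$-absolute $u_T$-extension. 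Your route proves the implication for \emph{all} colimits, not just $r$-extensions, at the cost of an extra technical verification; the paper's route is shorter because the adjunction-level statement is immediate and the translation machinery is already in place.
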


\begin{proof}
    The only if direction is trivial. For the other direction, observe that every $j'$-absolute $r$-extension is a $(j \d j')$-absolute $r$-extension, since every right-morphism of $j'$-adjunctions induces a right-morphism of $(j \d j')$-adjunctions by precomposing $j$~\cite[Proposition~5.29]{arkor2024formal}. Therefore, since $u_T$ strictly creates $(j \d j')$-absolute $r$-extensions, it strictly creates, in particular, $j'$-absolute $r$-extensions, from which the result follows by \cref{relative-monadicity-theorem}.
\end{proof}

In particular, if every monad admits an algebra object, we obtain that the algebra object for a relative monad $T$ is the algebra object for a monad whenever the forgetful tight-cell $u_T \colon \Alg(T) \to E$ admits a left adjoint.

\subsection{Non-strict relative monadicity}
\label{non-strict}

We now briefly discuss the case of non-strict relative monadicity. While strict relative monadicity is often the appropriate notion, there are situations in which it is convenient to consider a property that is invariant under equivalence. This motivates the following definition.

\begin{manualdefinition}{\ref*{strictly-j-monadic}$'$}
    \label{strictly-j-monadic'}
    Let $\jAE$ be a tight-cell. A tight-cell $r \colon D \to E$ is \emph{non-strictly $j$-relatively monadic} (alternatively \emph{non-strictly monadic relative to $j$}, or simply \emph{non-strictly $j$-monadic}) if it admits a left $j$-adjoint, the induced $j$-monad $T$ admits an algebra object, and the comparison tight-cell $\unit_{\ljr} \colon D \to \Alg(T)$ is an equivalence.
\end{manualdefinition}

Informally, a characterisation of non-strict relative monadicity is obtained by relaxing \cref{relative-monadicity-theorem} through the replacement of the equality of tight-cells by isomorphism, and thereby asking for non-strict creation of colimits rather than strict creation. With respect to the proof strategy, there is one notable difference to the strict case. While, just as in the strict setting, the comparison tight-cell $\unit_{\ljr} \colon D \to \Alg(T)$ is a morphism of resolutions, the tight-cell $\Alg(T) \to D$ constructed via the non-strict creation of colimits only commutes with the left and right adjoints up to isomorphism. However, once this subtlety is taken into account, the proof then proceeds in essentially the same way as the strict case.

Below, our proof follows the structure of the strict case. Each statement is numbered according to its non-strict pair and marked with a prime ($'$) to denote non-strictness.

\begin{manuallemma}{\ref*{split-resolution-morphisms}$'$}
    \label{split-resolution-morphisms'}
    Let $\jAE$ be a dense tight-cell, and let $\ell \jadj r$ be a resolution of a $j$-monad $T$. If $r$ non-strictly creates $j$-absolute $r$-extensions, then every tight-cell $c \colon C \to C'$ rendering pseudo-commutative the diagram on the left below admits a pseudo-retraction $c \plx 1_C$ rendering pseudo-commutative the diagram on the right below.
    \[
    \begin{tikzcd}[row sep=small]
        & C \\
        A && E \\
        & {C'}
        \arrow["r", from=1-2, to=2-3]
        \arrow[""{name=0, anchor=center, inner sep=0}, "c"{description}, from=1-2, to=3-2]
        \arrow["\ell", from=2-1, to=1-2]
        \arrow["{\ell'}"', from=2-1, to=3-2]
        \arrow["{r'}"', from=3-2, to=2-3]
        \arrow["\iso"{description}, draw=none, from=0, to=2-3]
        \arrow["\iso"{description}, draw=none, from=2-1, to=0]
    \end{tikzcd}
    \hspace{4em}
    \begin{tikzcd}[row sep=small]
        & C \\
        A && E \\
        & {C'}
        \arrow["r", from=1-2, to=2-3]
        \arrow["\ell", from=2-1, to=1-2]
        \arrow["{\ell'}"', from=2-1, to=3-2]
        \arrow[""{name=0, anchor=center, inner sep=0}, "{c \plx 1_C}"{description}, from=3-2, to=1-2]
        \arrow["{r'}"', from=3-2, to=2-3]
        \arrow["\iso"{description, pos=0.45}, draw=none, from=2-1, to=0]
        \arrow["\iso"{description, pos=0.55}, draw=none, from=0, to=2-3]
    \end{tikzcd}
    \]
\end{manuallemma}

\begin{sketch}
    We have a $j$-adjunction $(\ell \d c) \jadj r'$ using $\ell' \iso (\ell \d c)$.
    Since $j$ is dense, there is a unique right-morphism $(c, \rho) \colon (\ljr) \to ((\ell \d c) \jadj r')$ of $j$-adjunctions (\cf~\cref{uniqueness-of-rho}).
    Therefore, by \cref{right-morphism-exhibits-left-extension}, $\rho$ exhibits $r'$ as the $j$-absolute $r$-extension $c \plx r$.
    The proof of \cref{split-resolution-morphisms} then carries through with respect to non-strict creation of colimits after replacing the equalities of tight-cells with isomorphisms.
\end{sketch}

\begin{manualtheorem}{\ref*{relative-monadicity-theorem}$'$}
    \label{relative-monadicity-theorem'}
    Let $\jAE$ be a dense tight-cell. A tight-cell $r \colon D \to E$ is non-strictly $j$-monadic if and only if $r$ admits a left $j$-adjoint, the induced $j$-monad admits an algebra object, and $r$ non-strictly creates $j$-absolute $r$-extensions.
\end{manualtheorem}

\begin{proof}
    If $r$ is non-strictly $j$-monadic, then it admits a left $j$-adjoint for which the induced $j$-monad has an algebra object, by definition. Furthermore, $r$ is the composite of an equivalence with $u_T$, hence non-strictly creates $j$-absolute $r$-extensions by \cref{u_T-nonstrict-creation}. For the converse, observe that $r$ and $u_T$ both non-strictly create $j$-absolute $r$-extensions, so that $\unit_{\ljr}$ and $\unit_{\ljr} \plx 1_D$ exhibit $D$ and $\Alg(T)$ as pseudo-retracts of one another by \cref{split-resolution-morphisms'}, and hence that $\unit_{\ljr}$ is an equivalence.
\end{proof}

\begin{manualcorollary}{\ref*{relative-monadicity-theorem-absolute}$'$}
    \label{relative-monadicity-theorem-absolute'}
    Let $\jAE$ be a dense tight-cell. A tight-cell $r \colon D \to E$ is non-strictly $j$-monadic if and only if $r$ admits a left $j$-adjoint, the induced $j$-monad admits an algebra object, and $r$ non-strictly creates $j$-absolute colimits.
\end{manualcorollary}

\begin{proof}
    Follows directly from \cref{relative-monadicity-theorem'} together with \cref{u_T-nonstrict-creation}.
\end{proof}

Equivalently, in the statements of \cref{relative-monadicity-theorem',relative-monadicity-theorem-absolute'}, rather than ask that $r$ non-strictly create the requisite colimits, we could ask for $r$ to be conservative and preserve the requisite colimits (using \cref{preservation-and-conservativity-implies-creation} and that conservative tight-cells are closed under equivalences and composition), which is often more convenient in practice.

\begin{manualproposition}{\ref*{monadic-iff-left-adjoint}$'$}
    \label{monadic-iff-left-adjoint'}
    Let $\jAE$ and $\jEI$ be tight-cells and suppose that $j'$ is dense. A non-strictly $(j \d j')$-monadic tight-cell $r \colon D \to E'$ is non-strictly $j'$-monadic if and only if it admits a left $j'$-adjoint and the induced $j'$-monad admits an algebra object.
\end{manualproposition}

\begin{proof}
    The proof of \cref{monadic-iff-left-adjoint} carries through with respect to non-strict creation of colimits.
\end{proof}

\section{Enriched relative monadicity}
\label{relative-monadicity-in-VCat}

Instantiating \cref{relative-monadicity-theorem}(\hyperref[relative-monadicity-theorem']{$'$}) and its corollaries in the equipment $\VCat$ of categories enriched in a monoidal category~$\V$ (\cite[Definition~8.1]{arkor2024formal}), we immediately obtain a relative monadicity theorem for enriched relative monads. Under some additional assumptions on $\V$, we may make some simplifications.
We first make note of the following characterisation of $j$-absolute colimits, which is a special case of \cite[Lemma~8.10]{arkor2024formal}.
Below, when they exist, we write $\P A$ for the $\V$-category of presheaves on a $\V$-category $A$, and write $n_j \colon E \to \P A$ for the nerve of a $\V$-functor $\jAE$, which is defined by $n_j e \defeq E(j{-}, e)$. Dually, we write $\cl Q Z$ for the $\V$-category of copresheaves on a $\V$-category $Z$, and write $m_i \colon U \to \cl Q Z$ for the co-nerve of a $\V$-functor $i \colon Z \to U$, which is defined by $m_i u \defeq U(u, i{-})$.

\begin{lemma}
    \label{j-absolute-is-preservation-by-n_j}
    Let $\V$ be a complete left- and right-closed monoidal category and let $\jAE$ be a $\V$-functor with small domain.
    Given a $\V$-distributor $p \colon X \lto Y$ and $\V$-functor $f \colon Y \to E$, a colimit $p \wc f$ in $E$ is $j$-absolute exactly when it is preserved by the nerve $n_j \colon E \to \P A$.
\end{lemma}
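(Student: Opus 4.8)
The plan is to show that, once instantiated in $\VCat$, the two conditions in the statement unwind to the invertibility (and existence) of one and the same comparison, namely a $\V$-valued coend. Throughout I would use that the hypotheses on $\V$ and the smallness of $A$ guarantee that the presheaf $\V$-category $\P A$ exists (its hom-objects being the requisite ends, which exist since $\V$ is complete and closed), and that colimits in $\P A$, when they exist, are computed pointwise. I would also invoke the standard correspondence between $\V$-functors $Z \to \P A$ and $\V$-distributors $A \lto Z$, under which the nerve $n_j \colon E \to \P A$ corresponds to $E(j, 1) \colon A \lto E$; consequently $f \d n_j$ corresponds to $E(j, f)$ and $n_j(p \wc f)$ corresponds to $E(j, p \wc f)$.

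First I would identify the two sides explicitly. On the one hand, by \cref{j-absolute}, $j$-absoluteness of $p \wc f$ asserts that the left-composite $E(j, f) \odotl p$ exists and that the canonical 2-cell $E(j, f) \odotl p \tto E(j, p \wc f)$ is invertible; in $\VCat$ this left-composite, when it exists, is the $\V$-distributor whose component at $(a, x)$ is the coend $\int^{y} p(x, y) \tensor E(j a, f y)$, this being the coend that computes loose-composition. On the other hand, preservation of $p \wc f$ by $n_j$ (in the sense of \cite[Definition~3.8]{arkor2024formal}) asserts that the image of the colimiting cocone exhibits $n_j(p \wc f)$ as $p \wc (f \d n_j)$; since colimits in $\P A$ are pointwise, $p \wc (f \d n_j)$ exists exactly when the same coends $\int^{y} p(x, y) \tensor E(j a, f y)$ exist in $\V$, and then corresponds to the distributor they assemble into. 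Hence the left-composite exists if and only if the colimit $p \wc (f \d n_j)$ does, both amounting to the existence of these coends.

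It then remains to check that the two comparison 2-cells agree under the functor–distributor correspondence. Both are induced by the colimiting cocone $\lambda \colon p \tto E(f, p \wc f)$: the $j$-absolute comparison is the factorisation of (the image of) $\lambda$ through the universal property of the left-composite, while the preservation comparison is the factorisation of the image cocone through the pointwise colimit in $\P A$. Since the correspondence identifies these two universal properties, the comparisons coincide, so one is invertible precisely when the other is. Combining this with the matching of existence conditions above yields the claimed equivalence.

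The hard part will be the bookkeeping of the previous paragraph: matching the abstractly-defined canonical 2-cell of \cref{j-absolute} with the concrete coend comparison arising from preservation, rather than merely matching their sources and targets. This rests entirely on the $\VCat$-specific identifications — that loose- and left-composition of $\V$-distributors are computed by coends, and that colimits in $\P A$ are pointwise — so the cleanest route is to reduce the whole statement to these identifications, which are exactly the content packaged by the enriched specialisation results of \S8 of \cite{arkor2024formal}.
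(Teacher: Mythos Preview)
Your proposal is correct and converges on the same argument as the paper: both reduce the claim to the $\VCat$-specific identifications established in \S8 of \cite{arkor2024formal}. The paper's proof is simply a two-line citation --- $\P A$ exists since $A$ is small, and the equivalence is then a special case of \cite[Lemma~8.10]{arkor2024formal} --- whereas you have unpacked what that lemma presumably contains (left-composites as coends, colimits in $\P A$ as pointwise, the functor--distributor correspondence identifying $n_j$ with $E(j,1)$), before noting at the end that this is exactly what \S8 packages.
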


\begin{proof}
    Since $A$ is small, $\P A$ exists by \cite[\S3]{gordon1999gabriel}, and the result follows by \cite[Lemma~8.10]{arkor2024formal}.
\end{proof}

We spell out \cref{relative-monadicity-theorem-absolute}(\hyperref[relative-monadicity-theorem-absolute']{$'$}) in the equipments $\VCat$ and $\VCat\co$, where $\V$ is a well-behaved monoidal category, using the characterisation of $j$-absolute colimits above.

\begin{theorem}[Enriched relative monadicity]
    \label{enriched-relative-monadicity-theorem}
    Let $\V$ be a complete left- and right-closed monoidal category and let $\jAE$ be a dense $\V$-functor with small domain.
    A $\V$-functor $r \colon D \to E$ is (non)strictly $j$-monadic if and only if $r$ admits a left $j$-adjoint and (non)strictly creates those colimits that are preserved by the nerve $n_j \colon E \to \P A$.
\end{theorem}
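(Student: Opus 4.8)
The plan is to obtain this theorem as a direct instantiation of the formal relative monadicity theorem in the equipment $\VCat$, combined with the characterisation of $j$-absoluteness furnished by \cref{j-absolute-is-preservation-by-n_j}. Since $\VCat$ is a \ve{} and $j$ is dense by assumption, I would first apply \cref{relative-monadicity-theorem-absolute} in the strict case (respectively \cref{relative-monadicity-theorem-absolute'} in the non-strict case), which yields that $r$ is (non)strictly $j$-monadic if and only if $r$ admits a left $j$-adjoint, the induced $j$-monad admits an algebra object, and $r$ (non)strictly creates $j$-absolute colimits.

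The next step is to rewrite the two remaining conditions into the form asserted in the statement. For the colimit condition, I would invoke \cref{j-absolute-is-preservation-by-n_j}: under the standing hypotheses that $\V$ is complete and left- and right-closed and that $A$ is small, the $j$-absolute colimits in $E$ are precisely those preserved by the nerve $n_j \colon E \to \P A$. Thus the phrase ``(non)strictly creates $j$-absolute colimits'' becomes ``(non)strictly creates those colimits preserved by $n_j$'', exactly as in the statement.

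The one genuine point to address is the hypothesis that the induced $j$-monad admit an algebra object, which appears in the formal theorem but is absent from the enriched statement. I would argue that under the present hypotheses this condition is automatic and may therefore be suppressed: since $A$ is small and $\V$ is complete and closed, the induced $j$-monad---whose carrier $\ell \d r$ has small domain $A$---always admits an algebra object by \cite[Corollary~8.20]{arkor2024formal}. Consequently, whenever $r$ admits a left $j$-adjoint the algebra object automatically exists, and the formal characterisation reduces to the one stated.

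I expect the main (indeed, essentially the only) obstacle to be the verification of this last point: that completeness and closedness of $\V$ together with smallness of $A$ guarantee the existence of the algebra object via \cite[Corollary~8.20]{arkor2024formal}. Everything else is a routine transport of the formal theorem across the instantiation $\X = \VCat$, together with the substitution of the enriched characterisation of $j$-absoluteness.
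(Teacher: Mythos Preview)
Your proposal is correct and matches the paper's proof essentially line for line: the paper first invokes \cite[Corollary~8.20]{arkor2024formal} (using smallness of $A$) to guarantee that algebra objects exist, and then appeals directly to \cref{relative-monadicity-theorem-absolute}(\hyperref[relative-monadicity-theorem-absolute']{$'$}) together with \cref{j-absolute-is-preservation-by-n_j}. Your identification of the existence of the algebra object as the only point requiring separate justification is exactly right.
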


\begin{proof}
    Since $A$ is small, $\V$-categories of algebras for $j$-monads exist by \cite[Corollary~8.20]{arkor2024formal}.
    The result then follows directly from \cref{relative-monadicity-theorem-absolute}(\hyperref[relative-monadicity-theorem-absolute']{$'$}) together with \cref{j-absolute-is-preservation-by-n_j}.
\end{proof}

\begin{manualtheorem}{\ref*{enriched-relative-monadicity-theorem}$\co$}[Enriched relative comonadicity]
    \label{enriched-relative-comonadicity-theorem}
    Let $\V$ be a complete left- and right-closed monoidal category and let $i \colon Z \to U$ be a codense $\V$-functor with small domain.
    A $\V$-functor $\ell \colon W \to U$ is (non)strictly $i$-comonadic if and only if $\ell$ admits a right $i$-coadjoint and (non)strictly creates those limits that are preserved by the co-nerve $m_i \colon U \to \cl Q Z$.
\end{manualtheorem}

\begin{proof}
    Since $A$ is small, copresheaf $\V$-categories exist, as do $\V$-categories of coalgebras for $i$-comonads by \cite[Theorem~8.23]{arkor2024formal}. The result then follows as for \cref{enriched-relative-monadicity-theorem}.
\end{proof}

\begin{remark}
    When $A$ is large, presheaf $\V$-categories can no longer be assumed to exist, and so $j$-absoluteness cannot be characterised in terms of the nerve without a change of enrichment base. In this case \cref{relative-monadicity-theorem}(\hyperref[relative-monadicity-theorem']{$'$}) and \cref{relative-monadicity-theorem-absolute}(\hyperref[relative-monadicity-theorem-absolute']{$'$}) may be used directly.
\end{remark}

From \cref{enriched-relative-monadicity-theorem}, taking $j = 1$, we recover the unenriched monadicity theorem of \citeauthor{pare1969absolute}~\cites[Theorem~3.5]{pare1969absoluteness}[Theorem]{pare1969absolute}[Theorem~7.3]{pare1971absolute}, which is a reformulation of the unenriched monadicity theorem of \textcite{beck1966untitled} in terms of absolute colimits; as well as an analogous characterisation of enriched monadicity (\cf{}~\cites[Theorem~2.11]{bunge1969relative}[Theorem~II.2.1]{dubuc1970kan}). Taking $j$ to be dense and \ff{}, we recover the unenriched relative monadicity theorems of \citeauthor{diers1975jmonades}~\cites[Th\'eorem\`e~2.5]{diers1975jmonades}[Theoreme~5.1]{diers1976foncteur} and \textcite[Corollary~2.8]{lee1977relative}.

\begin{remark}
    It does not appear to be possible to obtain a characterisation of relative monadicity in terms of contractible coequalisers without much stronger assumptions on the root $j$ that seldom hold in examples of interest other than $j = 1$; we do not pursue such a characterisation.
\end{remark}

\begin{remark}
    The formal relative monadicity theorem \cref{relative-monadicity-theorem-absolute}(\hyperref[relative-monadicity-theorem-absolute']{$'$}) applies to a relative monad $T$ with a dense root as soon as $T$ admits an algebra object. Therefore, establishing a relative monadicity theorem in a given \ve{} $\X$ reduces to establishing the existence of algebra objects in $\X$. In \cite{arkor2024formal}, the existence of algebra objects was studied only for the \ve{} $\VCat$. However, as we shall show in future work, the proof given there carries through with minor modifications in the more general context of the \ve{} $\L\dc{Mnd}(\X)$ of loose-monads and modules in a \vdc{} $\X$ with restrictions (\cf~\cite[Proposition~7.4]{cruttwell2010unified}). In this way, we shall obtain relative monadicity theorems for other notions of category, such as internal categories and generalised multicategories, as well as for more general notions of enriched category~\cite{leinster1999generalized}.
\end{remark}

\subsection{Examples}

We now demonstrate the usefulness of the relative monadicity theorem by presenting several prototypical situations in which the theorem may be applied.

In the first situation we present, one has a notion of theory and an accompanying notion of algebra. For instance, we may show that the category of algebras for a finitary algebraic theory is monadic relative to the inclusion of finite sets into small sets (\cf{}~\cite{linton1966triples,linton1969outline,diers1975jmonades,diers1976foncteur}).

\begin{example}[Algebraic theories induce monads]
    \label{algebraic-theories-induce-monads}
    Denote by $\F$ the free category with strict finite coproducts on a single object $1$. The inclusion $j \colon \F \equiv \FinSet \ffto \Set$ of finite ordinals into small sets exhibits a cocompletion under sifted colimits, and is hence dense and \ff{}.

    Now consider a finitary algebraic theory $k \colon \F \to K$ in the sense of \cite[Chapter~2]{lawvere1963functorial}, \ie{} an \ioo{} functor preserving finite coproducts. The category of algebras for $k$ is, up to equivalence, the category $\Cart[K\op, \Set]$ of finite-product-preserving functors from $K\op$ to $\Set$. The forgetful functor, which we denote by $u_k \colon \Cart[K\op, \Set] \to \Set$, is the composite of $\Cart[k\op, \Set]$ with the equivalence $\Cart[\F\op, \Set] \equiv \Set$. Since $K$ has finite coproducts, $u_k$ is a continuous and sifted-cocontinuous functor between locally strongly finitely presentable categories~\cite{adamek2001sifted}, and consequently admits a left adjoint $f_k \colon \Set \to \Cart[K\op, \Set]$. Denoting by $\sigma_L \colon K \ffto \Cart[K\op, \Set]$ the Yoneda embedding, we therefore have the following diagram in $\CAT$, in which the rightmost square is a pseudopullback.
    \[\begin{tikzcd}
        K & {\Cart[K\op, \Set]} & {[K\op, \Set]} \\
        \F & \Set & {[\F\op, \Set]}
        \arrow[""{name=0, anchor=center, inner sep=0}, "{u_k}", curve={height=-12pt}, from=1-2, to=2-2]
        \arrow["{n_{\sigma_K}}", hook, from=1-2, to=1-3]
        \arrow["{n_j}"', hook, from=2-2, to=2-3]
        \arrow[""{name=1, anchor=center, inner sep=0}, "{[k\op, \Set]}", from=1-3, to=2-3]
        \arrow[""{name=2, anchor=center, inner sep=0}, "k", from=2-1, to=1-1]
        \arrow[""{name=3, anchor=center, inner sep=0}, "{f_k}", curve={height=-12pt}, from=2-2, to=1-2]
        \arrow["{\sigma_K}", hook, from=1-1, to=1-2]
        \arrow["j"', hook, from=2-1, to=2-2]
        \arrow["\dashv"{anchor=center}, draw=none, from=3, to=0]
        \arrow["\iso"{description}, draw=none, from=2, to=3]
        \arrow["{\it{ps.\;pb}}"{description}, draw=none, from=0, to=1]
    \end{tikzcd}\]
    Since $k$ is identity-on-objects, $[k\op, \Set]$ is an amnestic isofibration and strictly creates colimits. By the former property, the pseudopullback is equivalent to the strict pullback~\cite[Corollary~1]{joyal1993pullbacks}.
    The forgetful functor $u_k$ therefore non-strictly creates those colimits that $n_j$ preserves \cite[Proposition~21.7.2(c)]{schubert1972categories}, which are precisely the $j$-absolute colimits by \cref{j-absolute-is-preservation-by-n_j}. By \cite[Proposition~5.29]{arkor2024formal}, we have $(j \d f_k) \jadj u_k$ and so $u_k$ is $j$-monadic by \cref{enriched-relative-monadicity-theorem}. Furthermore, by \cref{monadic-iff-left-adjoint'}, it is additionally non-strictly monadic. Every finitary algebraic theory therefore induces both a $j$-monad, and a sifted-cocontinuous monad on $\Set$ -- the former obtained by precomposing the latter by $j$~\cite[Proposition~5.36]{arkor2024formal} -- whose categories of algebras are concretely equivalent.
\end{example}

More generally, the methods of \cref{algebraic-theories-induce-monads} may be used to show that, for an arity class $\kappa$ in the sense of \cite[Definition~2.2]{shulman2012exact} (for instance, any regular cardinal), the category of algebras for a $\kappa$-ary algebraic theory is monadic relative to $\Set_\kappa \ffto \Set$, the full subcategory inclusion of the $\kappa$-small sets. When $\kappa = \N$, we recover finitary algebraic theories; and when $\kappa$ is the cardinality of the universe, we recover infinitary algebraic theories. Another useful case is given by taking $\kappa = \{ 1 \}$, as in the following example.

\begin{example}
  \label{unary-alebraic-theories}
  A unary algebraic theory is an \ioo{} functor with domain $1$, and is understood syntactically to be a theory presented solely by unary operations. Concretely, each unary algebraic theory is equivalent to a monoid $M$, viewed as a one-object category, and its category of algebras is the presheaf category $\Set^{M\op}$ of right-actions of $M$ (sometimes called \emph{right $M$-sets}). Consequently, a functor is $(1 \ffto \Set)$-monadic precisely when its domain is equivalent to a presheaf category on a single object, and its action is given by evaluation at that unique object.

  Alternatively, we may give a characterisation via the relative monadicity theorem. Observe that the nerve of $1 \ffto \Set$ is isomorphic to the identity functor on $\Set$. Hence every colimit is $(1 \ffto \Set)$-absolute. A functor $u \colon D \to \Set$ is therefore $(1 \ffto \Set)$-monadic if and only if it creates colimits and admits a left $(1 \ffto \Set)$-adjoint. This latter condition holds exactly when $u$ is corepresentable, \ie{} when there exists an object $d \in \ob D$ such that $u \iso D(d, {-})$.
\end{example}

\Cref{algebraic-theories-induce-monads,unary-alebraic-theories} demonstrate the typical relationship between monadicity relative to different functors. Suppose that we have functors $\jAE$ and $\jEI$. In general, $(j \d j')$-monadicity is neither stronger nor weaker than $j'$-monadicity: $(j \d j')$-adjointness is a weaker property than $j'$-adjointness, but creation of the requisite $(j \d j')$-absolute colimits is a stronger property than creation of the requisite $j'$-absolute colimits. However, in many cases of interest, a $(j \d j')$-monadic functor will admit a left $j'$-adjoint, in which case \cref{monadic-iff-left-adjoint}(\hyperref[monadic-iff-left-adjoint']{$'$}) will apply.

\begin{example}
  Consider the inclusions $j \colon 1 \ffto \F$ and $j' \colon \F \ffto \Set$, so that $(j \d j')$-monadicity is the $(1 \ffto \Set)$-monadicity of \cref{unary-alebraic-theories} and $j'$-monadicity is the $(\F \ffto \Set)$-monadicity of \cref{algebraic-theories-induce-monads}. By the adjoint functor theorem, the forgetful functor from the category of algebras for any unary algebraic theory admits a left adjoint, and is thus monadic, and hence also $j'$-monadic. Thus $(j \d j')$-monadicity implies $j'$-monadicity. The converse is not true in general: although the forgetful functor from the category of algebras for a finitary algebraic theory admits a left adjoint, and thus also a $(j \d j')$-adjoint, it will not create all colimits in general. For instance, the underlying set of a coproduct of monoids is not the coproduct of their underlying sets.

  More generally, for arity classes $\kappa \subseteq \kappa'$, the methods above may be used to show that the category of algebras for a $\kappa$-ary algebraic theory is always $(\Set_{\kappa'} \ffto \Set)$-monadic. Taking $\kappa'$ to be the cardinality of the universe, every $\kappa$-ary algebraic theory is seen to induce a monad on $\Set$ of rank $\kappa$.
\end{example}

In the second situation we present, one has a class of weights $\Phi$, and a monad $T$ on a free $\Phi$-cocompletion $\Phi(A)$, for which $T$ preserves $\Phi$-weighted colimits. In this case, the category of algebras for $T$ is monadic relative to the cocompletion $A \to \Phi(A)$.

\begin{example}
    \label{cocontinuous-monads}
    Let $\Phi$ be a class of weights, let $A$ be a small $\V$-category admitting a free cocompletion $A \to \Phi(A)$ under $\Phi$-weighted colimits, and let $T$ be a $\Phi$-cocontinuous monad on $\Phi(A)$. Since $T$ is $\Phi$-cocontinuous, it preserves $(A \to \Phi(A))$-absolute colimits~\cite[Theorems~5.29 \& 5.35]{kelly1982basic}. Hence, the forgetful functor $u_T \colon \Alg(T) \to \Phi(A)$ strictly creates $(A \to \Phi(A))$-absolute colimits (\cf{}~\cite[Proposition~4.3.2]{borceux1994handbook2}). Consequently, since every cocompletion is dense, and $u_T$ admits a left $(A \to \Phi(A))$-adjoint~\cite[Proposition~5.29]{arkor2024formal}, \cref{enriched-relative-monadicity-theorem} implies that the forgetful functor $u_T$ is strictly $(A \to \Phi(A))$-monadic. For instance, the category of algebras for a finitary monad $T$ on a locally finitely presentable category $E$ is the category of algebras for the $(E_\tx{fp} \ffto E)$-monad induced by precomposing $T$ with the inclusion of the full subcategory $E_\tx{fp}$ of finitely presentable objects in $E$~\cite[Proposition~5.36]{arkor2024formal}.
\end{example}

\Cref{cocontinuous-monads} gives an alternative method to \cref{algebraic-theories-induce-monads} for proving that finitary algebraic theories induce $(\F \ffto \Set)$-relative monads, at least supposing that one already knows that every algebraic theory induces a sifted-cocontinuous monad on $\Set$ (which is the cocompletion of $\F$ under sifted colimits).

In the third situation, one has a presentation of a structure by operations and equations, and consequently an induced notion of algebra for the presentation. For instance, we may show that the category of algebras for any quantitative equational theory in the sense of \cite[Definition~2.2]{mardare2016quantitative} is relatively monadic. We give the specific example of quantitative semigroups below; the general case follows analogously.

\begin{example}
  \label{metric-spaces}
  Denote by $\Met$ the category of extended metric spaces and nonexpanding maps, and by $j \colon \FinMet \ffto \Met$ the full subcategory inclusion of the finite metric spaces. For a metric space $X$, denote by $d_X \colon X \times X \to [0, \infty]$ its distance function. Since $\Met$ is a cartesian-monoidal category, we may consider the category $\Semigrp(\Met)$ of semigroups internal to $\Met$, \ie{} metric spaces $X$ equipped with an associative function $\ph \monprod \ph \colon X \times X \to X$ such that $\max\{d_X(x_1, x_1'), d_X(x_2, x_2')\} \geq d_X((x_1 \monprod x_2), (x_1' \monprod x_2'))$.
  The forgetful functor $u \colon \Semigrp(\Met) \to \Met$ admits a left adjoint (an explicit construction is given in \cite[\S6]{mardare2016quantitative}), and hence a left $j$-adjoint~\cite[Proposition~5.29]{arkor2024formal}.

  Moreover, we may prove that $u$ strictly creates every $j$-absolute colimit $C \defeq \colim_i (u(D_i))$, where $D$ is a diagram in $\Semigrp(\Met)$.
  To do so, we shall show that each such $C$ admits a unique semigroup structure such that the coprojections $\copi_i$ are semigroup homomorphisms.
  Since the nerve of $j$ preserves the colimit $C$ by assumption, every nonexpansive map $X \to C$ with finite domain factors through some coprojection $\copi_i \colon D_i \to C$.
  In particular, every finite subobject of $C$ factors thus, and consequently every finite subset of $C$ is equal to the image $\copi_i(S)$, for some $i \in \ob D$, of a subset $S \subseteq D_i$, such that $d_{D_i}(x, x') = d_C(\copi_i(x), \copi_i(x'))$ for each pair $x, x' \in S$.
  Uniqueness of the binary operation $\monprod$ on $C$ follows by considering two-element subsets, and associativity by considering three-element subsets.
  Nonexpansiveness of $\monprod$ follows by considering a four-element subset, and using the condition on $d_{D_i}$ above.

  It thereby follows that $u$ is strictly $(\FinMet \ffto \Met)$-monadic.
\end{example}

\Cref{metric-spaces} gives another alternative method to \cref{algebraic-theories-induce-monads}, since every finitary algebraic theory admits a presentation~\cite{birkhoff1935structure}.
In fact, the three situations we have described -- theories, cocontinuous monads, and presentations -- are all facets of a more general relationship. To illustrate this, we give one final example: the relative monadicity of categories of algebras for a $j$-theory in the sense of \cites[Definition~3.1.13]{arkor2022monadic}[Definition~3.1]{lucyshyn2023enriched}.

\begin{example}
    \label{j-theory}
    Let $\V$ be a locally small closed symmetric monoidal category with equalisers, embedding into a complete closed symmetric monoidal category $\V'$~\cite[\S3.11 \& \S3.12]{kelly1982basic}. Let $\jAE$ be a dense and \ff{} $\V$-functor, and let $\ell \colon A \to B$ be a $j$-theory, \ie{} an \ioo{} $\V$-functor admitting a right $j$-adjoint $r \colon B \to E$. The category of \emph{(concrete) algebras} for $\ell$ is defined to be the following pullback in $\V'\h\Cat$~\cite[Definition~3.3]{lucyshyn2023enriched}; we denote by $\yo_B$ the Yoneda embedding.
    \[\begin{tikzcd}
        B & \ell\h\Alg & {[B\op, \V]} \\
        & E & {[A\op, \V]}
        \arrow[""{name=0, anchor=center, inner sep=0}, "{u_\ell}"{description}, from=1-2, to=2-2]
        \arrow["n", hook, from=1-2, to=1-3]
        \arrow["{n_j}"', hook, from=2-2, to=2-3]
        \arrow[""{name=1, anchor=center, inner sep=0}, "{[\ell\op, \V]}", from=1-3, to=2-3]
        \arrow["{\yo_B}", curve={height=-30pt}, hook, from=1-1, to=1-3]
        \arrow["r"', curve={height=12pt}, from=1-1, to=2-2]
        \arrow["i"{description}, dashed, hook, from=1-1, to=1-2]
        \arrow["{\it{pb}}"{description}, draw=none, from=0, to=1]
    \end{tikzcd}\]
    Just as in \cref{algebraic-theories-induce-monads}, it follows that the pullback is equivalent to the pseudopullback, and that $u_\ell$ strictly creates $j$-absolute colimits. The projection $\V'$-functor $n \colon \ell\h\Alg \to [B\op, \V]$ is \ff{}, since $j$ is dense and \ff{} functors are closed under pullback. The relative adjunction $B(\ell, 1) \iso E(j, r)$ consequently induces a mediating $\V'$-functor $i \colon B \to \ell\h\Alg$, which is \ff{}, since $i \d n \iso \yo_B$ and $n$ and $\yo_B$ are \ff{}. By \cite[Proposition~5.16]{kelly1982basic}, we therefore have $n \iso n_i$. Pseudocommutativity of the pseudopullback square thus asserts that $(\ell \d i) \colon A \to \ell\h\Alg$ is left-$j$-adjoint to $u_\ell$.
    Consequently, by \cref{enriched-relative-monadicity-theorem}, the category of algebras for $\ell$ is strictly $j$-monadic in $\V'\h\Cat$. When $u_\ell$ furthermore admits a left adjoint (\ie{} $\ell$ is \emph{admissible} in the sense of \cite[Definition~3.12]{lucyshyn2023enriched}), it is strictly monadic in $\V'\h\Cat$ by \cref{monadic-iff-left-adjoint}, recovering \cite[Proposition~4.5]{lucyshyn2023enriched}.
\end{example}

\Cref{j-theory} turns out to subsume the previous examples. However, explicating this connection is beyond the scope of the present paper. In future work, we shall develop this connection fully, establishing a formal correspondence between these notions (\cf{}~\cites[Chapters~3 \& 7]{arkor2022monadic}[]{arkor2022relative}[\S5.1]{lucyshyn2023enriched}[]{lucyshyn2022diagrammatic}).

\section{A pasting law for relatively monadic adjunctions}
\label{composition-and-monadicity}

In this section, we recall the pasting law for relative adjunctions and show that it respects relative monadicity in an appropriate sense. As a consequence, we derive necessary and sufficient conditions for the composite $(r \d r')$ of a tight-cell $r$ with a $j$-monadic tight-cell $r'$ to be $j$-monadic. We continue to work in the context of a virtual equipment $\X$.

\begin{lemma}[{\cite[Proposition~5.30]{arkor2024formal}}]
    \label{relative-adjunction-pasting}
    Consider the following diagram.
    \[\begin{tikzcd}[sep=small]
        && C \\
        &&& D \\
        A &&&& E
        \arrow["r", from=1-3, to=2-4]
        \arrow["j"', from=3-1, to=3-5]
        \arrow[""{name=0, anchor=center, inner sep=0}, "{r'}", from=2-4, to=3-5]
        \arrow[""{name=1, anchor=center, inner sep=0}, "{\ell'}"{description}, from=3-1, to=2-4]
        \arrow["\ell", from=3-1, to=1-3]
        \arrow["\dashv"{anchor=center}, shift right=1, draw=none, from=1, to=0]
    \end{tikzcd}\]
    The left triangle is a relative adjunction ($\ell \radj{\ell'} r$) if and only if the outer triangle is a relative adjunction ($\ell \jadj r \d r'$).
\end{lemma}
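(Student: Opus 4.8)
The plan is to exploit the fact that, by \cref{relative-adjunction}, each of the relative adjunctions in play amounts to a single invertible 2-cell between parallel restriction loose-cells, and to relate the relevant restriction loose-cells by restricting the transposition isomorphism of the hypothesised $j$-adjunction $\ell' \jadj r'$ (the adjunction indicated by the $\dashv$ in the diagram) along $r$. By \cref{relative-adjunction}, this standing hypothesis is exactly the datum of an isomorphism $D(\ell', 1) \iso E(j, r')$ of loose-cells $D \lto A$.

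First I would restrict this isomorphism along $r \colon C \to D$ (on the $D$-leg). Since restriction is functorial in the loose-cell, it carries the invertible 2-cell $D(\ell', 1) \iso E(j, r')$ to an invertible 2-cell between the restricted loose-cells; and since restrictions compose, $D(\ell', 1)(1, r) = D(\ell', r)$ and $E(j, r')(1, r) = E(j, r \d r')$. This produces a canonical isomorphism
\[
    \theta \colon D(\ell', r) \iso E(j, r \d r')
\]
of loose-cells $C \lto A$.

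Next I would unwind the two conditions using \cref{relative-adjunction} once more. The left triangle is a relative adjunction $\ell \radj{\ell'} r$ precisely when there is an isomorphism $C(\ell, 1) \iso D(\ell', r)$, whereas the outer triangle is a relative adjunction $\ell \jadj (r \d r')$ precisely when there is an isomorphism $C(\ell, 1) \iso E(j, r \d r')$. As $C(\ell, 1)$, $D(\ell', r)$, and $E(j, r \d r')$ are all parallel loose-cells $C \lto A$, postcomposition with $\theta$ and with $\theta^{-1}$ gives mutually inverse assignments between transposition operators for the two relative adjunctions, each sending an isomorphism to an isomorphism. Hence a transposition operator for one exists if and only if one for the other does, which settles both implications simultaneously.

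I expect the only delicate point to be purely calculational: confirming that the restriction is taken along the correct ($D$-)leg and that the identities $D(\ell', 1)(1, r) = D(\ell', r)$ and $E(j, r')(1, r) = E(j, r \d r')$ hold, so that $\theta$ is genuinely a 2-cell between the intended parallel loose-cells. Once the restriction calculus is set up carefully, no substantive obstacle remains, since the equipment-theoretic definition of a relative adjunction collapses all transposition coherence into the single requirement that a 2-cell be invertible.
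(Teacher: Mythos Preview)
Your argument is correct. The paper does not supply its own proof of this lemma; it is stated as a citation of \cite[Proposition~5.30]{arkor2024formal} and used as a black box. Your approach---restricting the transposition isomorphism $D(\ell', 1) \iso E(j, r')$ of the given $j$-adjunction along $r$ to obtain $D(\ell', r) \iso E(j, r \d r')$, and then observing that this identifies the two defining isomorphisms $C(\ell, 1) \iso D(\ell', r)$ and $C(\ell, 1) \iso E(j, r \d r')$---is exactly the standard proof one would expect, and indeed matches how the result is established in the cited reference. The restriction identities you flag as the only delicate point are routine consequences of functoriality of restriction in an equipment, and you have them right.
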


We shall specialise \cref{relative-adjunction-pasting} to the case in which $r'$ is strictly $j$-monadic, and study the algebras for the relative monads induced by the corresponding relative adjunctions.

\begin{lemma}
    \label{pasting-law-algebras}
    Consider the following situation, and let $T$ be the $f_{T'}$-monad induced by $\ell \radj{f_{T'}} r$.
    \[\begin{tikzcd}[sep=small]
        && C \\
        &&& {\Alg(T')} \\
        A &&&& E
        \arrow[""{name=0, anchor=center, inner sep=0}, "r", from=1-3, to=2-4]
        \arrow["j"', from=3-1, to=3-5]
        \arrow[""{name=1, anchor=center, inner sep=0}, "{u_{T'}}", from=2-4, to=3-5]
        \arrow[""{name=2, anchor=center, inner sep=0}, "{f_{T'}}"{description}, from=3-1, to=2-4]
        \arrow[""{name=3, anchor=center, inner sep=0}, "\ell", from=3-1, to=1-3]
        \arrow["\dashv"{anchor=center, pos=.6}, draw=none, from=2, to=1]
        \arrow["\dashv"{anchor=center, rotate=19}, shift right=2, draw=none, from=3, to=0]
    \end{tikzcd}\]
    Postcomposition by $u_{T'}$ induces a bijection between $T$-algebras and $(T \d u_{T'})$-algebras, and their graded morphisms.
\end{lemma}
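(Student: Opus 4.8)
The plan is to realise the bijection concretely as postcomposition by $u_{T'}$ on carriers, and to verify it by reducing every piece of structure to $T'$-algebras and their morphisms via the universal property of the algebra object $\Alg(T')$, together with the transposition isomorphism of the resolution $f_{T'} \jadj u_{T'}$. First I would identify the monad $T \d u_{T'}$: by the pasting law (\cref{relative-adjunction-pasting}), the relative adjunction $\ell \radj{f_{T'}} r$ yields a $j$-adjunction $\ell \jadj (r \d u_{T'})$, and $T \d u_{T'}$ is the $j$-monad it induces, with carrier $(\ell \d r \d u_{T'})$. Thus a $(T \d u_{T'})$-algebra consists of a carrier $c \colon D \to E$ and an extension operator $\aop'' \colon E(j, c) \tto E(\ell \d r \d u_{T'}, c)$, whereas a $T$-algebra consists of a carrier $e \colon D \to \Alg(T')$ and an extension operator $\aop \colon \Alg(T')(f_{T'}, e) \tto \Alg(T')(\ell \d r, e)$, recalling that $\ell \d r$ is the carrier of $T$.

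Next I would set up the dictionary on the underlying data. Since $\Alg(T')$ is the algebra object for $T'$, \cref{algebra-object} identifies tight-cells $e \colon D \to \Alg(T')$ with $T'$-algebras of carrier $(e \d u_{T'})$; this gives the correspondence $e \mapsto (e \d u_{T'})$ on carriers. Restricting the transposition isomorphism $\sharp \colon \Alg(T')(f_{T'}, 1) \iso E(j, u_{T'})$ of the resolution along $e$ yields $\Alg(T')(f_{T'}, e) \iso E(j, e \d u_{T'})$, matching the domains of $\aop$ and $\aop''$. For the codomains, note that $(\ell \d r) \colon A \to \Alg(T')$ is a tight-cell into the algebra object, so the 2-cell universal property of \cref{algebra-object} (in the form of \cref{alternative-graded-morphism}) applies with codomain $\Alg(T')(\ell \d r, e)$: the 2-cells $\Alg(T')(f_{T'}, e) \tto \Alg(T')(\ell \d r, e)$ correspond bijectively to $T'$-algebra morphisms $\Alg(T')(f_{T'}, e) \tto E(\ell \d r \d u_{T'}, e \d u_{T'})$. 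Composing with the transposition isomorphism, the extension operators $\aop$ of $T$-algebras on $e$ then correspond to precisely those 2-cells $\aop'' \colon E(j, e \d u_{T'}) \tto E(\ell \d r \d u_{T'}, e \d u_{T'})$ that are $T'$-algebra morphisms.

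It remains to match the algebra laws and to treat the converse direction. For the forward direction, I would check by string-diagram manipulation that the unit and associativity laws for $\aop$ translate into those for $\aop''$, using that $\sharp$ is compatible with the extension operator of $T'$ and that the structure 2-cells of $T \d u_{T'}$ are exactly the pasted structure 2-cells of $T$ and the resolution. For the converse, the key point is that a $(T \d u_{T'})$-algebra $(c, \aop'')$ canonically determines the $T'$-algebra structure needed to lift $c$ to a tight-cell $e \colon D \to \Alg(T')$: restricting $\aop''$ along the postcomposed unit $(\eta_T \d u_{T'}) \colon (f_{T'} \d u_{T'}) \tto (\ell \d r \d u_{T'})$ of $T$ produces a 2-cell $E(j, c) \tto E(f_{T'} \d u_{T'}, c)$, which one verifies is a $T'$-algebra structure; the $(T \d u_{T'})$-algebra laws then guarantee both that $\aop''$ is a $T'$-algebra morphism and that the corresponding $\aop$ satisfies the $T$-algebra laws.

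Finally, the bijection on graded morphisms follows from the same two tools — the 2-cell universal property of $\Alg(T')$, turning $T$-algebra morphisms over $\Alg(T')$ into $T'$-algebra morphisms over $E$, and the transposition isomorphism — with the $T$-algebra-morphism compatibility law matching the $(T \d u_{T'})$-algebra-morphism law. I expect the main obstacle to be exactly the law-matching of the previous paragraph, and especially its converse direction: because $u_{T'}$ is not fully faithful, the codomain loose-cells $\Alg(T')(\ell \d r, e)$ and $E(\ell \d r \d u_{T'}, e \d u_{T'})$ are not isomorphic, so the correspondence cannot be obtained by transport along an isomorphism and must instead be extracted from the universal property of the algebra object $\Alg(T')$, with the $T'$-algebra structure on a composite-monad algebra recovered from the unit of $T$.
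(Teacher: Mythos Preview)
Your proposal is correct and follows essentially the same approach as the paper: both lift carriers via the tight-cell universal property of $\Alg(T')$, lift extension operators via the 2-cell universal property together with the transposition isomorphism $\sharp'$ of the resolution $f_{T'} \jadj u_{T'}$, and recover the $T'$-algebra structure on a $(T \d u_{T'})$-algebra by restricting along the unit (the paper phrases this as restriction along the left-morphism $(r,\eta)$, citing \cite[Proposition~6.27 \& Example~6.31]{arkor2024formal}, which is exactly your ``postcomposed unit'' construction). The one place where the paper is more explicit is in observing that the extension operator $\aop'$ of a $(T \d u_{T'})$-algebra is itself an $(E(j,e))$-graded $T'$-algebra morphism---this is the content of your claim that ``the $(T \d u_{T'})$-algebra laws guarantee that $\aop''$ is a $T'$-algebra morphism'', and the paper handles it by citation rather than direct verification; you should expect to either invoke the same result or unfold it by hand.
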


\begin{proof}
  We denote the carrier of the $f_{T'}$-monad $T$ by $t = (\ell \d r)$, the unit by $\eta \colon f_{T'} \tto t$, and the extension operator by $\dagger \colon \Alg(T')(f_{T'}, t) \tto \Alg(T')(t, t)$;
  and denote the transposition operators of the relative adjunction $f_{T'} \jadj u_{T'}$ by
  $
    \sharp' \colon \Alg(T')(f_{T'}, 1) \iso E(j, r) \cocolon \flat'
  $.
  Recall that every $T$-algebra (morphism) induces a $(T \d u_{T'})$-algebra (morphism) by postcomposing $u_{T'}$ (\cite[Proposition~6.27]{arkor2024formal}), and that every $(T \d u_{T'})$-algebra (morphism) induces a $T'$-algebra (morphism) via the left-morphism $(r, \eta)$.

  Given a $(T \d u_{T'})$-algebra $(e, \aop')$, we shall construct a $T$-algebra. The carrier $x \colon X \to \Alg(T')$ is induced by the universal property of $\Alg(T')$ from the $T'$-algebra induced by $(r, \eta)$. For the algebra structure, observe that the 2-cell $\aop' \colon E(j, e) \tto E(u_{T'} t, e)$ forms an $(E(j, e))$-graded $T'$-algebra morphism in the sense of \cref{alternative-graded-morphism}, from $(T \d u_{T'})$ to the induced $T'$-algebra on $e$, following \cite[Example~6.31]{arkor2024formal}. Hence, the universal property of $\Alg(T')$ induces a 2-cell $\unit_{\aop'} \colon E(j, e) \tto \Alg(T')(t, x)$ such that postcomposition by $u_{T'}$ yields $\aop'$. Let $\aop$ be as follows.
    \[\aop \colon \Alg(T')(f_{T'}, x) \xtto{\sharp'(1, x)} E(j, u_{T'} x) = E(j, e) \xtto{\unit_{\aop'}} \Alg(T')(t, x)\]
    The pair $(x, \aop)$ forms a $T$-algebra. The unit law follows: \eqstepref{1.1} by the $\sharp'$--$\flat'$ isomorphism; \eqstepref{1.2} by bending $u_{T'}$; \eqstepref{1.3} by the definition of $\unit_{\aop'}$; \eqstepref{1.4} by the unit law for $\aop'$; and \eqstepref{1.5} by the $\sharp'$--$\flat'$ isomorphism. To show the extension law, it suffices to show the equality of the corresponding 2-cells under postcomposition by $u_{T'}$, using the universal property of $\Alg(T')$. This equality follows: \eqstepref{2.1} by the definition of $\unit_{\aop'}$; \eqstepref{2.2} by the $\sharp'$--$\flat'$ isomorphism; \eqstepref{2.3} by the extension operator law for $\aop'$; and \eqstepref{2.4} by the definition of $\unit_{\aop'}$ twice.
    \begin{tangleeqs}
    \begin{tangle}{(3,7)}[trim y]
        \tgBorderA{(0,0)}{\tgColour6}{\tgColour0}{\tgColour0}{\tgColour6}
        \tgBlank{(1,0)}{\tgColour0}
        \tgBorderA{(2,0)}{\tgColour0}{\tgColour8}{\tgColour8}{\tgColour0}
        \tgBorderA{(0,1)}{\tgColour6}{\tgColour0}{\tgColour4}{\tgColour6}
        \tgBorderA{(1,1)}{\tgColour0}{\tgColour0}{\tgColour0}{\tgColour4}
        \tgBorderA{(2,1)}{\tgColour0}{\tgColour8}{\tgColour8}{\tgColour0}
        \tgBorderA{(0,2)}{\tgColour6}{\tgColour4}{\tgColour0}{\tgColour6}
        \tgBorderA{(1,2)}{\tgColour4}{\tgColour0}{\tgColour0}{\tgColour0}
        \tgBorder{(1,2)}{0}{1}{0}{0}
        \tgBorderA{(2,2)}{\tgColour0}{\tgColour8}{\tgColour8}{\tgColour0}
        \tgBorder{(2,2)}{0}{0}{0}{1}
        \tgBorderA{(0,3)}{\tgColour6}{\tgColour0}{\tgColour0}{\tgColour6}
        \tgBlank{(1,3)}{\tgColour0}
        \tgBorderA{(2,3)}{\tgColour0}{\tgColour8}{\tgColour8}{\tgColour0}
        \tgBorderA{(0,4)}{\tgColour6}{\tgColour0}{\tgColour0}{\tgColour6}
        \tgBlank{(1,4)}{\tgColour0}
        \tgBorderA{(2,4)}{\tgColour0}{\tgColour8}{\tgColour8}{\tgColour0}
        \tgBorderA{(0,5)}{\tgColour6}{\tgColour0}{\tgColour0}{\tgColour6}
        \tgBlank{(1,5)}{\tgColour0}
        \tgBorderA{(2,5)}{\tgColour0}{\tgColour8}{\tgColour8}{\tgColour0}
        \tgBorderA{(0,6)}{\tgColour6}{\tgColour0}{\tgColour0}{\tgColour6}
        \tgBlank{(1,6)}{\tgColour0}
        \tgBorderA{(2,6)}{\tgColour0}{\tgColour8}{\tgColour8}{\tgColour0}
        \tgCell[(1,0)]{(0.5,1)}{\sharp'}
        \tgCell[(2,0)]{(1,2)}{\unit_{\aop'}}
        \tgCell{(0,3)}{\eta}
        \tgArrow{(0,3.5)}{1}
        \tgArrow{(2,0.5)}{3}
        \tgArrow{(2,1.5)}{3}
        \tgArrow{(2,2.5)}{3}
        \tgArrow{(2,3.5)}{3}
        \tgArrow{(1,1.5)}{3}
        \tgArrow{(0,0.5)}{1}
        \tgArrow{(0,1.5)}{1}
        \tgArrow{(0,2.5)}{1}
        \tgArrow{(2,4.5)}{3}
        \tgArrow{(2,5.5)}{3}
        \tgArrow{(0,4.5)}{1}
        \tgArrow{(0,5.5)}{1}
        \tgAxisLabel{(0.5,0.75)}{south}{f_{T'}}
        \tgAxisLabel{(2.5,0.75)}{south}{x}
        \tgAxisLabel{(0.5,6.25)}{north}{f_{T'}}
        \tgAxisLabel{(2.5,6.25)}{north}{x}
    \end{tangle}
    \=
    \begin{tangle}{(3,7)}[trim y]
        \tgBorderA{(0,0)}{\tgColour6}{\tgColour0}{\tgColour0}{\tgColour6}
        \tgBlank{(1,0)}{\tgColour0}
        \tgBorderA{(2,0)}{\tgColour0}{\tgColour8}{\tgColour8}{\tgColour0}
        \tgBorderA{(0,1)}{\tgColour6}{\tgColour0}{\tgColour4}{\tgColour6}
        \tgBorderA{(1,1)}{\tgColour0}{\tgColour0}{\tgColour0}{\tgColour4}
        \tgBorderA{(2,1)}{\tgColour0}{\tgColour8}{\tgColour8}{\tgColour0}
        \tgBorderA{(0,2)}{\tgColour6}{\tgColour4}{\tgColour0}{\tgColour6}
        \tgBorderA{(1,2)}{\tgColour4}{\tgColour0}{\tgColour0}{\tgColour0}
        \tgBorder{(1,2)}{0}{1}{0}{0}
        \tgBorderA{(2,2)}{\tgColour0}{\tgColour8}{\tgColour8}{\tgColour0}
        \tgBorder{(2,2)}{0}{0}{0}{1}
        \tgBorderA{(0,3)}{\tgColour6}{\tgColour0}{\tgColour0}{\tgColour6}
        \tgBlank{(1,3)}{\tgColour0}
        \tgBorderA{(2,3)}{\tgColour0}{\tgColour8}{\tgColour8}{\tgColour0}
        \tgBorderA{(0,4)}{\tgColour6}{\tgColour0}{\tgColour4}{\tgColour6}
        \tgBorderA{(1,4)}{\tgColour0}{\tgColour0}{\tgColour0}{\tgColour4}
        \tgBorderA{(2,4)}{\tgColour0}{\tgColour8}{\tgColour8}{\tgColour0}
        \tgBorderA{(0,5)}{\tgColour6}{\tgColour4}{\tgColour0}{\tgColour6}
        \tgBorderA{(1,5)}{\tgColour4}{\tgColour0}{\tgColour0}{\tgColour0}
        \tgBorderA{(2,5)}{\tgColour0}{\tgColour8}{\tgColour8}{\tgColour0}
        \tgBorderA{(0,6)}{\tgColour6}{\tgColour0}{\tgColour0}{\tgColour6}
        \tgBlank{(1,6)}{\tgColour0}
        \tgBorderA{(2,6)}{\tgColour0}{\tgColour8}{\tgColour8}{\tgColour0}
        \tgCell[(1,0)]{(0.5,1)}{\sharp'}
        \tgCell[(2,0)]{(1,2)}{\unit_{\aop'}}
        \tgCell{(0,3)}{\eta}
        \tgArrow{(0,3.5)}{1}
        \tgArrow{(2,0.5)}{3}
        \tgArrow{(2,1.5)}{3}
        \tgArrow{(2,2.5)}{3}
        \tgArrow{(2,3.5)}{3}
        \tgArrow{(1,1.5)}{3}
        \tgArrow{(0,0.5)}{1}
        \tgArrow{(0,1.5)}{1}
        \tgArrow{(0,2.5)}{1}
        \tgCell[(1,0)]{(0.5,4)}{\sharp'}
        \tgCell[(1,0)]{(0.5,5)}{\flat'}
        \tgArrow{(1,4.5)}{3}
        \tgArrow{(0,4.5)}{1}
        \tgArrow{(0,5.5)}{1}
        \tgArrow{(2,4.5)}{3}
        \tgArrow{(2,5.5)}{3}
        \tgAxisLabel{(0.5,0.75)}{south}{f_{T'}}
        \tgAxisLabel{(2.5,0.75)}{south}{x}
        \tgAxisLabel{(0.5,6.25)}{north}{f_{T'}}
        \tgAxisLabel{(2.5,6.25)}{north}{x}
    \end{tangle}
    \=
    \begin{tangle}{(4,7)}[trim y]
        \tgBorderA{(0,0)}{\tgColour6}{\tgColour0}{\tgColour0}{\tgColour6}
        \tgBlank{(1,0)}{\tgColour0}
        \tgBlank{(2,0)}{\tgColour0}
        \tgBorderA{(3,0)}{\tgColour0}{\tgColour8}{\tgColour8}{\tgColour0}
        \tgBorderA{(0,1)}{\tgColour6}{\tgColour0}{\tgColour4}{\tgColour6}
        \tgBorderA{(1,1)}{\tgColour0}{\tgColour0}{\tgColour0}{\tgColour4}
        \tgBlank{(2,1)}{\tgColour0}
        \tgBorderA{(3,1)}{\tgColour0}{\tgColour8}{\tgColour8}{\tgColour0}
        \tgBorderA{(0,2)}{\tgColour6}{\tgColour4}{\tgColour0}{\tgColour6}
        \tgBorderA{(1,2)}{\tgColour4}{\tgColour0}{\tgColour0}{\tgColour0}
        \tgBorder{(1,2)}{0}{1}{0}{0}
        \tgBorderA{(2,2)}{\tgColour0}{\tgColour0}{\tgColour0}{\tgColour0}
        \tgBorder{(2,2)}{0}{1}{0}{1}
        \tgBorderA{(3,2)}{\tgColour0}{\tgColour8}{\tgColour8}{\tgColour0}
        \tgBorder{(3,2)}{0}{0}{0}{1}
        \tgBorderA{(0,3)}{\tgColour6}{\tgColour0}{\tgColour0}{\tgColour6}
        \tgBorderC{(1,3)}{3}{\tgColour0}{\tgColour4}
        \tgBorderC{(2,3)}{2}{\tgColour0}{\tgColour4}
        \tgBorderA{(3,3)}{\tgColour0}{\tgColour8}{\tgColour8}{\tgColour0}
        \tgBorderA{(0,4)}{\tgColour6}{\tgColour0}{\tgColour4}{\tgColour6}
        \tgBorderA{(1,4)}{\tgColour0}{\tgColour4}{\tgColour4}{\tgColour4}
        \tgBorderA{(2,4)}{\tgColour4}{\tgColour0}{\tgColour0}{\tgColour4}
        \tgBorderA{(3,4)}{\tgColour0}{\tgColour8}{\tgColour8}{\tgColour0}
        \tgBorderA{(0,5)}{\tgColour6}{\tgColour4}{\tgColour0}{\tgColour6}
        \tgBorderA{(1,5)}{\tgColour4}{\tgColour4}{\tgColour0}{\tgColour0}
        \tgBorderA{(2,5)}{\tgColour4}{\tgColour0}{\tgColour0}{\tgColour0}
        \tgBorderA{(3,5)}{\tgColour0}{\tgColour8}{\tgColour8}{\tgColour0}
        \tgBorderA{(0,6)}{\tgColour6}{\tgColour0}{\tgColour0}{\tgColour6}
        \tgBlank{(1,6)}{\tgColour0}
        \tgBlank{(2,6)}{\tgColour0}
        \tgBorderA{(3,6)}{\tgColour0}{\tgColour8}{\tgColour8}{\tgColour0}
        \tgCell{(0,3)}{\eta}
        \tgCell[(2,0)]{(1,5)}{\flat'}
        \tgCell[(1,0)]{(0.5,4)}{\sharp'}
        \tgCell[(3,0)]{(1.5,2)}{\unit_{\aop'}}
        \tgCell[(1,0)]{(0.5,1)}{\sharp'}
        \tgArrow{(3,0.5)}{3}
        \tgArrow{(0,0.5)}{1}
        \tgArrow{(0,1.5)}{1}
        \tgArrow{(0,2.5)}{1}
        \tgArrow{(0,3.5)}{1}
        \tgArrow{(0,4.5)}{1}
        \tgArrow{(0,5.5)}{1}
        \tgArrow{(1,3.5)}{1}
        \tgArrow{(1.5,3)}{0}
        \tgArrow{(2,3.5)}{3}
        \tgArrow{(2,4.5)}{3}
        \tgArrow{(3,2.5)}{3}
        \tgArrow{(3,3.5)}{3}
        \tgArrow{(3,4.5)}{3}
        \tgArrow{(3,5.5)}{3}
        \tgArrow{(3,1.5)}{3}
        \tgArrow{(1,1.5)}{3}
        \tgAxisLabel{(0.5,0.75)}{south}{f_{T'}}
        \tgAxisLabel{(3.5,0.75)}{south}{x}
        \tgAxisLabel{(0.5,6.25)}{north}{f_{T'}}
        \tgAxisLabel{(3.5,6.25)}{north}{x}
    \end{tangle}
    \\
    \begin{tangle}{(4,7)}[trim y]
        \tgBorderA{(0,0)}{\tgColour6}{\tgColour0}{\tgColour0}{\tgColour6}
        \tgBlank{(1,0)}{\tgColour0}
        \tgBlank{(2,0)}{\tgColour0}
        \tgBorderA{(3,0)}{\tgColour0}{\tgColour8}{\tgColour8}{\tgColour0}
        \tgBorderA{(0,1)}{\tgColour6}{\tgColour0}{\tgColour4}{\tgColour6}
        \tgBorderA{(1,1)}{\tgColour0}{\tgColour0}{\tgColour0}{\tgColour4}
        \tgBlank{(2,1)}{\tgColour0}
        \tgBorderA{(3,1)}{\tgColour0}{\tgColour8}{\tgColour8}{\tgColour0}
        \tgBorderA{(0,2)}{\tgColour6}{\tgColour4}{\tgColour0}{\tgColour6}
        \tgBorderA{(1,2)}{\tgColour4}{\tgColour0}{\tgColour4}{\tgColour0}
        \tgBorderA{(2,2)}{\tgColour0}{\tgColour0}{\tgColour0}{\tgColour4}
        \tgBorder{(2,2)}{0}{1}{0}{0}
        \tgBorderA{(3,2)}{\tgColour0}{\tgColour8}{\tgColour8}{\tgColour0}
        \tgBorder{(3,2)}{0}{0}{0}{1}
        \tgBorderA{(0,3)}{\tgColour6}{\tgColour0}{\tgColour0}{\tgColour6}
        \tgBorderA{(1,3)}{\tgColour0}{\tgColour4}{\tgColour4}{\tgColour0}
        \tgBorderA{(2,3)}{\tgColour4}{\tgColour0}{\tgColour0}{\tgColour4}
        \tgBorderA{(3,3)}{\tgColour0}{\tgColour8}{\tgColour8}{\tgColour0}
        \tgBorderA{(0,4)}{\tgColour6}{\tgColour0}{\tgColour4}{\tgColour6}
        \tgBorderA{(1,4)}{\tgColour0}{\tgColour4}{\tgColour4}{\tgColour4}
        \tgBorderA{(2,4)}{\tgColour4}{\tgColour0}{\tgColour0}{\tgColour4}
        \tgBorderA{(3,4)}{\tgColour0}{\tgColour8}{\tgColour8}{\tgColour0}
        \tgBorderA{(0,5)}{\tgColour6}{\tgColour4}{\tgColour0}{\tgColour6}
        \tgBorderA{(1,5)}{\tgColour4}{\tgColour4}{\tgColour0}{\tgColour0}
        \tgBorderA{(2,5)}{\tgColour4}{\tgColour0}{\tgColour0}{\tgColour0}
        \tgBorderA{(3,5)}{\tgColour0}{\tgColour8}{\tgColour8}{\tgColour0}
        \tgBorderA{(0,6)}{\tgColour6}{\tgColour0}{\tgColour0}{\tgColour6}
        \tgBlank{(1,6)}{\tgColour0}
        \tgBlank{(2,6)}{\tgColour0}
        \tgBorderA{(3,6)}{\tgColour0}{\tgColour8}{\tgColour8}{\tgColour0}
        \tgCell{(0,3)}{\eta}
        \tgCell[(2,0)]{(1,5)}{\flat'}
        \tgCell[(1,0)]{(0.5,4)}{\sharp'}
        \tgCell[(3,0)]{(1.5,2)}{\aop'}
        \tgCell[(1,0)]{(0.5,1)}{\sharp'}
        \tgArrow{(3,0.5)}{3}
        \tgArrow{(0,0.5)}{1}
        \tgArrow{(0,1.5)}{1}
        \tgArrow{(0,2.5)}{1}
        \tgArrow{(0,3.5)}{1}
        \tgArrow{(0,4.5)}{1}
        \tgArrow{(0,5.5)}{1}
        \tgArrow{(1,3.5)}{1}
        \tgArrow{(2,3.5)}{3}
        \tgArrow{(2,4.5)}{3}
        \tgArrow{(3,2.5)}{3}
        \tgArrow{(3,3.5)}{3}
        \tgArrow{(3,4.5)}{3}
        \tgArrow{(3,5.5)}{3}
        \tgArrow{(3,1.5)}{3}
        \tgArrow{(1,1.5)}{3}
        \tgArrow{(1,2.5)}{1}
        \tgArrow{(2,2.5)}{3}
        \tgAxisLabel{(0.5,0.75)}{south}{f_{T'}}
        \tgAxisLabel{(3.5,0.75)}{south}{x}
        \tgAxisLabel{(0.5,6.25)}{north}{f_{T'}}
        \tgAxisLabel{(3.5,6.25)}{north}{x}
    \end{tangle}
    \=
    \begin{tangle}{(3,7)}[trim y]
        \tgBorderA{(0,0)}{\tgColour6}{\tgColour0}{\tgColour0}{\tgColour6}
        \tgBlank{(1,0)}{\tgColour0}
        \tgBorderA{(2,0)}{\tgColour0}{\tgColour8}{\tgColour8}{\tgColour0}
        \tgBorderA{(0,1)}{\tgColour6}{\tgColour0}{\tgColour4}{\tgColour6}
        \tgBorderA{(1,1)}{\tgColour0}{\tgColour0}{\tgColour0}{\tgColour4}
        \tgBorderA{(2,1)}{\tgColour0}{\tgColour8}{\tgColour8}{\tgColour0}
        \tgBorderA{(0,2)}{\tgColour6}{\tgColour4}{\tgColour4}{\tgColour6}
        \tgBorderA{(1,2)}{\tgColour4}{\tgColour0}{\tgColour0}{\tgColour4}
        \tgBorderA{(2,2)}{\tgColour0}{\tgColour8}{\tgColour8}{\tgColour0}
        \tgBorderA{(0,3)}{\tgColour6}{\tgColour4}{\tgColour4}{\tgColour6}
        \tgBorderA{(1,3)}{\tgColour4}{\tgColour0}{\tgColour0}{\tgColour4}
        \tgBorderA{(2,3)}{\tgColour0}{\tgColour8}{\tgColour8}{\tgColour0}
        \tgBorderA{(0,4)}{\tgColour6}{\tgColour4}{\tgColour4}{\tgColour6}
        \tgBorderA{(1,4)}{\tgColour4}{\tgColour0}{\tgColour0}{\tgColour4}
        \tgBorderA{(2,4)}{\tgColour0}{\tgColour8}{\tgColour8}{\tgColour0}
        \tgBorderA{(0,5)}{\tgColour6}{\tgColour4}{\tgColour0}{\tgColour6}
        \tgBorderA{(1,5)}{\tgColour4}{\tgColour0}{\tgColour0}{\tgColour0}
        \tgBorderA{(2,5)}{\tgColour0}{\tgColour8}{\tgColour8}{\tgColour0}
        \tgBorderA{(0,6)}{\tgColour6}{\tgColour0}{\tgColour0}{\tgColour6}
        \tgBlank{(1,6)}{\tgColour0}
        \tgBorderA{(2,6)}{\tgColour0}{\tgColour8}{\tgColour8}{\tgColour0}
        \tgCell[(1,0)]{(0.5,1)}{\sharp'}
        \tgArrow{(0,0.5)}{1}
        \tgArrow{(0,1.5)}{1}
        \tgArrow{(0,2.5)}{1}
        \tgArrow{(0,3.5)}{1}
        \tgArrow{(0,4.5)}{1}
        \tgArrow{(0,5.5)}{1}
        \tgArrow{(1,3.5)}{3}
        \tgArrow{(1,1.5)}{3}
        \tgArrow{(1,2.5)}{3}
        \tgCell[(1,0)]{(0.5,5)}{\flat'}
        \tgArrow{(1,4.5)}{3}
        \tgArrow{(2,0.5)}{3}
        \tgArrow{(2,1.5)}{3}
        \tgArrow{(2,2.5)}{3}
        \tgArrow{(2,3.5)}{3}
        \tgArrow{(2,4.5)}{3}
        \tgArrow{(2,5.5)}{3}
        \tgAxisLabel{(0.5,0.75)}{south}{f_{T'}}
        \tgAxisLabel{(2.5,0.75)}{south}{x}
        \tgAxisLabel{(0.5,6.25)}{north}{f_{T'}}
        \tgAxisLabel{(2.5,6.25)}{north}{x}
    \end{tangle}
    \=
    \begin{tangle}{(2,7)}[trim y]
        \tgBorderA{(0,0)}{\tgColour6}{\tgColour0}{\tgColour0}{\tgColour6}
        \tgBorderA{(1,0)}{\tgColour0}{\tgColour8}{\tgColour8}{\tgColour0}
        \tgBorderA{(0,1)}{\tgColour6}{\tgColour0}{\tgColour0}{\tgColour6}
        \tgBorderA{(1,1)}{\tgColour0}{\tgColour8}{\tgColour8}{\tgColour0}
        \tgBorderA{(0,2)}{\tgColour6}{\tgColour0}{\tgColour0}{\tgColour6}
        \tgBorderA{(1,2)}{\tgColour0}{\tgColour8}{\tgColour8}{\tgColour0}
        \tgBorderA{(0,3)}{\tgColour6}{\tgColour0}{\tgColour0}{\tgColour6}
        \tgBorderA{(1,3)}{\tgColour0}{\tgColour8}{\tgColour8}{\tgColour0}
        \tgBorderA{(0,4)}{\tgColour6}{\tgColour0}{\tgColour0}{\tgColour6}
        \tgBorderA{(1,4)}{\tgColour0}{\tgColour8}{\tgColour8}{\tgColour0}
        \tgBorderA{(0,5)}{\tgColour6}{\tgColour0}{\tgColour0}{\tgColour6}
        \tgBorderA{(1,5)}{\tgColour0}{\tgColour8}{\tgColour8}{\tgColour0}
        \tgBorderA{(0,6)}{\tgColour6}{\tgColour0}{\tgColour0}{\tgColour6}
        \tgBorderA{(1,6)}{\tgColour0}{\tgColour8}{\tgColour8}{\tgColour0}
        \tgArrow{(0,0.5)}{1}
        \tgArrow{(0,1.5)}{1}
        \tgArrow{(1,0.5)}{3}
        \tgArrow{(1,1.5)}{3}
        \tgArrow{(1,2.5)}{3}
        \tgArrow{(1,3.5)}{3}
        \tgArrow{(1,4.5)}{3}
        \tgArrow{(1,5.5)}{3}
        \tgArrow{(0,5.5)}{1}
        \tgArrow{(0,4.5)}{1}
        \tgArrow{(0,3.5)}{1}
        \tgArrow{(0,2.5)}{1}
        \tgAxisLabel{(0.5,0.75)}{south}{f_{T'}}
        \tgAxisLabel{(1.5,0.75)}{south}{x}
        \tgAxisLabel{(0.5,6.25)}{north}{f_{T'}}
        \tgAxisLabel{(1.5,6.25)}{north}{x}
    \end{tangle}
    \end{tangleeqs}
    \begin{tangleeqs}
    \begin{tangle}{(5,5)}[trim y]
        \tgBorderA{(0,0)}{\tgColour6}{\tgColour0}{\tgColour0}{\tgColour6}
        \tgBorderA{(1,0)}{\tgColour0}{\tgColour6}{\tgColour6}{\tgColour0}
        \tgBorderA{(2,0)}{\tgColour6}{\tgColour0}{\tgColour0}{\tgColour6}
        \tgBlank{(3,0)}{\tgColour0}
        \tgBorderA{(4,0)}{\tgColour0}{\tgColour8}{\tgColour8}{\tgColour0}
        \tgBorderA{(0,1)}{\tgColour6}{\tgColour0}{\tgColour0}{\tgColour6}
        \tgBorderA{(1,1)}{\tgColour0}{\tgColour6}{\tgColour6}{\tgColour0}
        \tgBorderA{(2,1)}{\tgColour6}{\tgColour0}{\tgColour4}{\tgColour6}
        \tgBorderA{(3,1)}{\tgColour0}{\tgColour0}{\tgColour0}{\tgColour4}
        \tgBorderA{(4,1)}{\tgColour0}{\tgColour8}{\tgColour8}{\tgColour0}
        \tgBorderA{(0,2)}{\tgColour6}{\tgColour0}{\tgColour0}{\tgColour6}
        \tgBorder{(0,2)}{0}{1}{0}{0}
        \tgBorderA{(1,2)}{\tgColour0}{\tgColour6}{\tgColour0}{\tgColour0}
        \tgBorder{(1,2)}{0}{0}{0}{1}
        \tgBorderA{(2,2)}{\tgColour6}{\tgColour4}{\tgColour0}{\tgColour0}
        \tgBorderA{(3,2)}{\tgColour4}{\tgColour0}{\tgColour0}{\tgColour0}
        \tgBorder{(3,2)}{0}{1}{0}{0}
        \tgBorderA{(4,2)}{\tgColour0}{\tgColour8}{\tgColour8}{\tgColour0}
        \tgBorder{(4,2)}{0}{0}{0}{1}
        \tgBorderA{(0,3)}{\tgColour6}{\tgColour0}{\tgColour0}{\tgColour6}
        \tgBorderC{(1,3)}{3}{\tgColour0}{\tgColour4}
        \tgBorderA{(2,3)}{\tgColour0}{\tgColour0}{\tgColour4}{\tgColour4}
        \tgBorderC{(3,3)}{2}{\tgColour0}{\tgColour4}
        \tgBorderA{(4,3)}{\tgColour0}{\tgColour8}{\tgColour8}{\tgColour0}
        \tgBorderA{(0,4)}{\tgColour6}{\tgColour0}{\tgColour0}{\tgColour6}
        \tgBorderA{(1,4)}{\tgColour0}{\tgColour4}{\tgColour4}{\tgColour0}
        \tgBlank{(2,4)}{\tgColour4}
        \tgBorderA{(3,4)}{\tgColour4}{\tgColour0}{\tgColour0}{\tgColour4}
        \tgBorderA{(4,4)}{\tgColour0}{\tgColour8}{\tgColour8}{\tgColour0}
        \tgCell[(1,0)]{(0.5,2)}{\dag}
        \tgCell[(2,0)]{(3,2)}{\unit_{\aop'}}
        \tgCell[(1,0)]{(2.5,1)}{\sharp'}
        \tgArrow{(0,2.5)}{1}
        \tgArrow{(1,1.5)}{3}
        \tgArrow{(1,0.5)}{3}
        \tgArrow{(3,1.5)}{3}
        \tgArrow{(4,1.5)}{3}
        \tgArrow{(4,0.5)}{3}
        \tgArrow{(4,2.5)}{3}
        \tgArrow{(0,1.5)}{1}
        \tgArrow{(0,0.5)}{1}
        \tgArrow{(1.5,2)}{0}
        \tgArrow{(2,1.5)}{1}
        \tgArrow{(2,0.5)}{1}
        \tgArrow{(1.5,3)}{0}
        \tgArrow{(2.5,3)}{0}
        \tgArrow{(4,3.5)}{3}
        \tgArrow{(3,3.5)}{3}
        \tgArrow{(0,3.5)}{1}
        \tgArrow{(1,3.5)}{1}
        \tgAxisLabel{(0.5,0.75)}{south}{f_{T'}}
        \tgAxisLabel{(1.5,0.75)}{south}{t}
        \tgAxisLabel{(2.5,0.75)}{south}{f_{T'}}
        \tgAxisLabel{(4.5,0.75)}{south}{x}
        \tgAxisLabel{(0.5,4.25)}{north}{t}
        \tgAxisLabel{(1.5,4.25)}{north}{u_{T'}}
        \tgAxisLabel{(3.5,4.25)}{north}{u_{T'}}
        \tgAxisLabel{(4.5,4.25)}{north}{x}
    \end{tangle}
    \=
    \begin{tangle}{(5,5)}[trim y]
        \tgBorderA{(0,0)}{\tgColour6}{\tgColour0}{\tgColour0}{\tgColour6}
        \tgBorderA{(1,0)}{\tgColour0}{\tgColour6}{\tgColour6}{\tgColour0}
        \tgBorderA{(2,0)}{\tgColour6}{\tgColour0}{\tgColour0}{\tgColour6}
        \tgBlank{(3,0)}{\tgColour0}
        \tgBorderA{(4,0)}{\tgColour0}{\tgColour8}{\tgColour8}{\tgColour0}
        \tgBorderA{(0,1)}{\tgColour6}{\tgColour0}{\tgColour0}{\tgColour6}
        \tgBorderA{(1,1)}{\tgColour0}{\tgColour6}{\tgColour6}{\tgColour0}
        \tgBorderA{(2,1)}{\tgColour6}{\tgColour0}{\tgColour4}{\tgColour6}
        \tgBorderA{(3,1)}{\tgColour0}{\tgColour0}{\tgColour0}{\tgColour4}
        \tgBorderA{(4,1)}{\tgColour0}{\tgColour8}{\tgColour8}{\tgColour0}
        \tgBorderA{(0,2)}{\tgColour6}{\tgColour0}{\tgColour0}{\tgColour6}
        \tgBorder{(0,2)}{0}{1}{0}{0}
        \tgBorderA{(1,2)}{\tgColour0}{\tgColour6}{\tgColour0}{\tgColour0}
        \tgBorder{(1,2)}{0}{0}{0}{1}
        \tgBorderA{(2,2)}{\tgColour6}{\tgColour4}{\tgColour4}{\tgColour0}
        \tgBorder{(2,2)}{0}{1}{0}{0}
        \tgBorderA{(3,2)}{\tgColour4}{\tgColour0}{\tgColour0}{\tgColour4}
        \tgBorder{(3,2)}{0}{1}{0}{1}
        \tgBorderA{(4,2)}{\tgColour0}{\tgColour8}{\tgColour8}{\tgColour0}
        \tgBorder{(4,2)}{0}{0}{0}{1}
        \tgBorderA{(0,3)}{\tgColour6}{\tgColour0}{\tgColour0}{\tgColour6}
        \tgBlank{(1,3)}{\tgColour0}
        \tgBorderA{(2,3)}{\tgColour0}{\tgColour4}{\tgColour4}{\tgColour0}
        \tgBorderA{(3,3)}{\tgColour4}{\tgColour0}{\tgColour0}{\tgColour4}
        \tgBorderA{(4,3)}{\tgColour0}{\tgColour8}{\tgColour8}{\tgColour0}
        \tgBorderA{(0,4)}{\tgColour6}{\tgColour0}{\tgColour0}{\tgColour6}
        \tgBlank{(1,4)}{\tgColour0}
        \tgBorderA{(2,4)}{\tgColour0}{\tgColour4}{\tgColour4}{\tgColour0}
        \tgBorderA{(3,4)}{\tgColour4}{\tgColour0}{\tgColour0}{\tgColour4}
        \tgBorderA{(4,4)}{\tgColour0}{\tgColour8}{\tgColour8}{\tgColour0}
        \tgCell[(1,0)]{(0.5,2)}{\dag}
        \tgCell[(2,0)]{(3,2)}{\aop'}
        \tgCell[(1,0)]{(2.5,1)}{\sharp'}
        \tgArrow{(0,2.5)}{1}
        \tgArrow{(1,1.5)}{3}
        \tgArrow{(1,0.5)}{3}
        \tgArrow{(3,1.5)}{3}
        \tgArrow{(4,1.5)}{3}
        \tgArrow{(4,0.5)}{3}
        \tgArrow{(4,2.5)}{3}
        \tgArrow{(0,1.5)}{1}
        \tgArrow{(0,0.5)}{1}
        \tgArrow{(1.5,2)}{0}
        \tgArrow{(2,1.5)}{1}
        \tgArrow{(2,0.5)}{1}
        \tgArrow{(4,3.5)}{3}
        \tgArrow{(3,3.5)}{3}
        \tgArrow{(0,3.5)}{1}
        \tgArrow{(3,2.5)}{3}
        \tgArrow{(2,3.5)}{1}
        \tgArrow{(2,2.5)}{1}
        \tgAxisLabel{(0.5,0.75)}{south}{f_{T'}}
        \tgAxisLabel{(1.5,0.75)}{south}{t}
        \tgAxisLabel{(2.5,0.75)}{south}{f_{T'}}
        \tgAxisLabel{(4.5,0.75)}{south}{x}
        \tgAxisLabel{(0.5,4.25)}{north}{t}
        \tgAxisLabel{(2.5,4.25)}{north}{u_{T'}}
        \tgAxisLabel{(3.5,4.25)}{north}{u_{T'}}
        \tgAxisLabel{(4.5,4.25)}{north}{x}
    \end{tangle}
    \\
    \begin{tangle}{(6,6)}[trim y]
        \tgBorderA{(0,0)}{\tgColour6}{\tgColour0}{\tgColour0}{\tgColour6}
        \tgBlank{(1,0)}{\tgColour0}
        \tgBorderA{(2,0)}{\tgColour0}{\tgColour6}{\tgColour6}{\tgColour0}
        \tgBorderA{(3,0)}{\tgColour6}{\tgColour0}{\tgColour0}{\tgColour6}
        \tgBlank{(4,0)}{\tgColour0}
        \tgBorderA{(5,0)}{\tgColour0}{\tgColour8}{\tgColour8}{\tgColour0}
        \tgBorderA{(0,1)}{\tgColour6}{\tgColour0}{\tgColour4}{\tgColour6}
        \tgBorderA{(1,1)}{\tgColour0}{\tgColour0}{\tgColour0}{\tgColour4}
        \tgBorderA{(2,1)}{\tgColour0}{\tgColour6}{\tgColour6}{\tgColour0}
        \tgBorderA{(3,1)}{\tgColour6}{\tgColour0}{\tgColour0}{\tgColour6}
        \tgBlank{(4,1)}{\tgColour0}
        \tgBorderA{(5,1)}{\tgColour0}{\tgColour8}{\tgColour8}{\tgColour0}
        \tgBorderA{(0,2)}{\tgColour6}{\tgColour4}{\tgColour0}{\tgColour6}
        \tgBorderA{(1,2)}{\tgColour4}{\tgColour0}{\tgColour0}{\tgColour0}
        \tgBorderA{(2,2)}{\tgColour0}{\tgColour6}{\tgColour6}{\tgColour0}
        \tgBorderA{(3,2)}{\tgColour6}{\tgColour0}{\tgColour4}{\tgColour6}
        \tgBorderA{(4,2)}{\tgColour0}{\tgColour0}{\tgColour0}{\tgColour4}
        \tgBorderA{(5,2)}{\tgColour0}{\tgColour8}{\tgColour8}{\tgColour0}
        \tgBorderA{(0,3)}{\tgColour6}{\tgColour0}{\tgColour0}{\tgColour6}
        \tgBorder{(0,3)}{0}{1}{0}{0}
        \tgBorderA{(1,3)}{\tgColour0}{\tgColour0}{\tgColour0}{\tgColour0}
        \tgBorder{(1,3)}{0}{1}{0}{1}
        \tgBorderA{(2,3)}{\tgColour0}{\tgColour6}{\tgColour0}{\tgColour0}
        \tgBorder{(2,3)}{0}{0}{0}{1}
        \tgBorderA{(3,3)}{\tgColour6}{\tgColour4}{\tgColour4}{\tgColour0}
        \tgBorder{(3,3)}{0}{1}{0}{0}
        \tgBorderA{(4,3)}{\tgColour4}{\tgColour0}{\tgColour0}{\tgColour4}
        \tgBorder{(4,3)}{0}{1}{0}{1}
        \tgBorderA{(5,3)}{\tgColour0}{\tgColour8}{\tgColour8}{\tgColour0}
        \tgBorder{(5,3)}{0}{0}{0}{1}
        \tgBorderA{(0,4)}{\tgColour6}{\tgColour0}{\tgColour0}{\tgColour6}
        \tgBlank{(1,4)}{\tgColour0}
        \tgBlank{(2,4)}{\tgColour0}
        \tgBorderA{(3,4)}{\tgColour0}{\tgColour4}{\tgColour4}{\tgColour0}
        \tgBorderA{(4,4)}{\tgColour4}{\tgColour0}{\tgColour0}{\tgColour4}
        \tgBorderA{(5,4)}{\tgColour0}{\tgColour8}{\tgColour8}{\tgColour0}
        \tgBorderA{(0,5)}{\tgColour6}{\tgColour0}{\tgColour0}{\tgColour6}
        \tgBlank{(1,5)}{\tgColour0}
        \tgBlank{(2,5)}{\tgColour0}
        \tgBorderA{(3,5)}{\tgColour0}{\tgColour4}{\tgColour4}{\tgColour0}
        \tgBorderA{(4,5)}{\tgColour4}{\tgColour0}{\tgColour0}{\tgColour4}
        \tgBorderA{(5,5)}{\tgColour0}{\tgColour8}{\tgColour8}{\tgColour0}
        \tgCell[(2,0)]{(4,3)}{\aop'}
        \tgCell[(1,0)]{(3.5,2)}{\sharp'}
        \tgArrow{(2,2.5)}{3}
        \tgArrow{(2,1.5)}{3}
        \tgArrow{(4,2.5)}{3}
        \tgArrow{(5,2.5)}{3}
        \tgArrow{(5,1.5)}{3}
        \tgArrow{(5,3.5)}{3}
        \tgArrow{(2.5,3)}{0}
        \tgArrow{(3,2.5)}{1}
        \tgArrow{(3,1.5)}{1}
        \tgArrow{(5,4.5)}{3}
        \tgArrow{(4,4.5)}{3}
        \tgArrow{(4,3.5)}{3}
        \tgArrow{(3,4.5)}{1}
        \tgArrow{(3,3.5)}{1}
        \tgCell[(2,0)]{(1,3)}{\dag}
        \tgCell[(1,0)]{(0.5,2)}{\flat'}
        \tgCell[(1,0)]{(0.5,1)}{\sharp'}
        \tgArrow{(5,0.5)}{3}
        \tgArrow{(2,0.5)}{3}
        \tgArrow{(1,1.5)}{3}
        \tgArrow{(0,0.5)}{1}
        \tgArrow{(0,1.5)}{1}
        \tgArrow{(0,2.5)}{1}
        \tgArrow{(0,3.5)}{1}
        \tgArrow{(0,4.5)}{1}
        \tgArrow{(3,0.5)}{1}
        \tgAxisLabel{(0.5,0.75)}{south}{f_{T'}}
        \tgAxisLabel{(2.5,0.75)}{south}{t}
        \tgAxisLabel{(3.5,0.75)}{south}{f_{T'}}
        \tgAxisLabel{(5.5,0.75)}{south}{x}
        \tgAxisLabel{(0.5,5.25)}{north}{t}
        \tgAxisLabel{(3.5,5.25)}{north}{u_{T'}}
        \tgAxisLabel{(4.5,5.25)}{north}{u_{T'}}
        \tgAxisLabel{(5.5,5.25)}{north}{x}
    \end{tangle}
    \=
    \begin{tangle}{(6,6)}[trim y]
        \tgBorderA{(0,0)}{\tgColour6}{\tgColour0}{\tgColour0}{\tgColour6}
        \tgBlank{(1,0)}{\tgColour0}
        \tgBorderA{(2,0)}{\tgColour0}{\tgColour6}{\tgColour6}{\tgColour0}
        \tgBorderA{(3,0)}{\tgColour6}{\tgColour0}{\tgColour0}{\tgColour6}
        \tgBlank{(4,0)}{\tgColour0}
        \tgBorderA{(5,0)}{\tgColour0}{\tgColour8}{\tgColour8}{\tgColour0}
        \tgBorderA{(0,1)}{\tgColour6}{\tgColour0}{\tgColour0}{\tgColour6}
        \tgBlank{(1,1)}{\tgColour0}
        \tgBorderA{(2,1)}{\tgColour0}{\tgColour6}{\tgColour6}{\tgColour0}
        \tgBorderA{(3,1)}{\tgColour6}{\tgColour0}{\tgColour4}{\tgColour6}
        \tgBorderA{(4,1)}{\tgColour0}{\tgColour0}{\tgColour0}{\tgColour4}
        \tgBorderA{(5,1)}{\tgColour0}{\tgColour8}{\tgColour8}{\tgColour0}
        \tgBorderA{(0,2)}{\tgColour6}{\tgColour0}{\tgColour4}{\tgColour6}
        \tgBorderA{(1,2)}{\tgColour0}{\tgColour0}{\tgColour0}{\tgColour4}
        \tgBorderA{(2,2)}{\tgColour0}{\tgColour6}{\tgColour0}{\tgColour0}
        \tgBorderA{(3,2)}{\tgColour6}{\tgColour4}{\tgColour4}{\tgColour0}
        \tgBorder{(3,2)}{0}{1}{0}{0}
        \tgBorderA{(4,2)}{\tgColour4}{\tgColour0}{\tgColour0}{\tgColour4}
        \tgBorder{(4,2)}{0}{1}{0}{1}
        \tgBorderA{(5,2)}{\tgColour0}{\tgColour8}{\tgColour8}{\tgColour0}
        \tgBorder{(5,2)}{0}{0}{0}{1}
        \tgBorderA{(0,3)}{\tgColour6}{\tgColour4}{\tgColour4}{\tgColour6}
        \tgBorderC{(1,3)}{0}{\tgColour4}{\tgColour0}
        \tgBorderA{(2,3)}{\tgColour0}{\tgColour0}{\tgColour4}{\tgColour4}
        \tgBorderC{(3,3)}{1}{\tgColour4}{\tgColour0}
        \tgBorderA{(4,3)}{\tgColour4}{\tgColour0}{\tgColour0}{\tgColour4}
        \tgBorderA{(5,3)}{\tgColour0}{\tgColour8}{\tgColour8}{\tgColour0}
        \tgBorderA{(0,4)}{\tgColour6}{\tgColour4}{\tgColour0}{\tgColour6}
        \tgBorderA{(1,4)}{\tgColour4}{\tgColour4}{\tgColour4}{\tgColour0}
        \tgBorder{(1,4)}{0}{1}{0}{0}
        \tgBorderA{(2,4)}{\tgColour4}{\tgColour4}{\tgColour4}{\tgColour4}
        \tgBorder{(2,4)}{0}{1}{0}{1}
        \tgBorderA{(3,4)}{\tgColour4}{\tgColour4}{\tgColour4}{\tgColour4}
        \tgBorder{(3,4)}{0}{1}{0}{1}
        \tgBorderA{(4,4)}{\tgColour4}{\tgColour0}{\tgColour0}{\tgColour4}
        \tgBorder{(4,4)}{0}{1}{0}{1}
        \tgBorderA{(5,4)}{\tgColour0}{\tgColour8}{\tgColour8}{\tgColour0}
        \tgBorder{(5,4)}{0}{0}{0}{1}
        \tgBorderA{(0,5)}{\tgColour6}{\tgColour0}{\tgColour0}{\tgColour6}
        \tgBorderA{(1,5)}{\tgColour0}{\tgColour4}{\tgColour4}{\tgColour0}
        \tgBlank{(2,5)}{\tgColour4}
        \tgBlank{(3,5)}{\tgColour4}
        \tgBorderA{(4,5)}{\tgColour4}{\tgColour0}{\tgColour0}{\tgColour4}
        \tgBorderA{(5,5)}{\tgColour0}{\tgColour8}{\tgColour8}{\tgColour0}
        \tgCell[(1,0)]{(3.5,1)}{\sharp'}
        \tgArrow{(4,1.5)}{3}
        \tgArrow{(5,1.5)}{3}
        \tgArrow{(5,2.5)}{3}
        \tgArrow{(3,1.5)}{1}
        \tgArrow{(5,3.5)}{3}
        \tgArrow{(4,3.5)}{3}
        \tgArrow{(4,2.5)}{3}
        \tgArrow{(3,2.5)}{1}
        \tgCell[(3,0)]{(3.5,2)}{\aop'}
        \tgCell[(1,0)]{(0.5,2)}{\sharp'}
        \tgArrow{(5,0.5)}{3}
        \tgArrow{(1,2.5)}{3}
        \tgArrow{(4,4.5)}{3}
        \tgArrow{(5,4.5)}{3}
        \tgArrow{(3,0.5)}{1}
        \tgArrow{(2,0.5)}{1}
        \tgArrow{(2,1.5)}{1}
        \tgArrow{(0,0.5)}{1}
        \tgArrow{(0,1.5)}{1}
        \tgArrow{(1.5,3)}{0}
        \tgArrow{(2.5,3)}{0}
        \tgArrow{(0,2.5)}{1}
        \tgArrow{(0,3.5)}{1}
        \tgArrow{(0,4.5)}{1}
        \tgCell[(5,0)]{(2.5,4)}{\aop'}
        \tgArrow{(1,4.5)}{3}
        \tgAxisLabel{(0.5,0.75)}{south}{f_{T'}}
        \tgAxisLabel{(2.5,0.75)}{south}{t}
        \tgAxisLabel{(3.5,0.75)}{south}{f_{T'}}
        \tgAxisLabel{(5.5,0.75)}{south}{x}
        \tgAxisLabel{(0.5,5.25)}{north}{t}
        \tgAxisLabel{(1.5,5.25)}{north}{u_{T'}}
        \tgAxisLabel{(4.5,5.25)}{north}{u_{T'}}
        \tgAxisLabel{(5.5,5.25)}{north}{x}
    \end{tangle}
    \\
    \begin{tangle}{(6,7)}[trim y]
        \tgBorderA{(0,0)}{\tgColour6}{\tgColour0}{\tgColour0}{\tgColour6}
        \tgBlank{(1,0)}{\tgColour0}
        \tgBorderA{(2,0)}{\tgColour0}{\tgColour6}{\tgColour6}{\tgColour0}
        \tgBorderA{(3,0)}{\tgColour6}{\tgColour0}{\tgColour0}{\tgColour6}
        \tgBlank{(4,0)}{\tgColour0}
        \tgBorderA{(5,0)}{\tgColour0}{\tgColour8}{\tgColour8}{\tgColour0}
        \tgBorderA{(0,1)}{\tgColour6}{\tgColour0}{\tgColour0}{\tgColour6}
        \tgBlank{(1,1)}{\tgColour0}
        \tgBorderA{(2,1)}{\tgColour0}{\tgColour6}{\tgColour6}{\tgColour0}
        \tgBorderA{(3,1)}{\tgColour6}{\tgColour0}{\tgColour4}{\tgColour6}
        \tgBorderA{(4,1)}{\tgColour0}{\tgColour0}{\tgColour0}{\tgColour4}
        \tgBorderA{(5,1)}{\tgColour0}{\tgColour8}{\tgColour8}{\tgColour0}
        \tgBorderA{(0,2)}{\tgColour6}{\tgColour0}{\tgColour4}{\tgColour6}
        \tgBorderA{(1,2)}{\tgColour0}{\tgColour0}{\tgColour0}{\tgColour4}
        \tgBorderA{(2,2)}{\tgColour0}{\tgColour6}{\tgColour0}{\tgColour0}
        \tgBorderA{(3,2)}{\tgColour6}{\tgColour4}{\tgColour0}{\tgColour0}
        \tgBorderA{(4,2)}{\tgColour4}{\tgColour0}{\tgColour0}{\tgColour0}
        \tgBorder{(4,2)}{0}{1}{0}{0}
        \tgBorderA{(5,2)}{\tgColour0}{\tgColour8}{\tgColour8}{\tgColour0}
        \tgBorder{(5,2)}{0}{0}{0}{1}
        \tgBorderA{(0,3)}{\tgColour6}{\tgColour4}{\tgColour4}{\tgColour6}
        \tgBorderC{(1,3)}{0}{\tgColour4}{\tgColour0}
        \tgBorderA{(2,3)}{\tgColour0}{\tgColour0}{\tgColour4}{\tgColour4}
        \tgBorderA{(3,3)}{\tgColour0}{\tgColour0}{\tgColour4}{\tgColour4}
        \tgBorderC{(4,3)}{2}{\tgColour0}{\tgColour4}
        \tgBorderA{(5,3)}{\tgColour0}{\tgColour8}{\tgColour8}{\tgColour0}
        \tgBorderA{(0,4)}{\tgColour6}{\tgColour4}{\tgColour0}{\tgColour6}
        \tgBorderA{(1,4)}{\tgColour4}{\tgColour4}{\tgColour0}{\tgColour0}
        \tgBorderA{(2,4)}{\tgColour4}{\tgColour4}{\tgColour0}{\tgColour0}
        \tgBorderA{(3,4)}{\tgColour4}{\tgColour4}{\tgColour0}{\tgColour0}
        \tgBorderA{(4,4)}{\tgColour4}{\tgColour0}{\tgColour0}{\tgColour0}
        \tgBorder{(4,4)}{0}{1}{0}{0}
        \tgBorderA{(5,4)}{\tgColour0}{\tgColour8}{\tgColour8}{\tgColour0}
        \tgBorder{(5,4)}{0}{0}{0}{1}
        \tgBorderA{(0,5)}{\tgColour6}{\tgColour0}{\tgColour0}{\tgColour6}
        \tgBorderC{(1,5)}{3}{\tgColour0}{\tgColour4}
        \tgBorderA{(2,5)}{\tgColour0}{\tgColour0}{\tgColour4}{\tgColour4}
        \tgBorderA{(3,5)}{\tgColour0}{\tgColour0}{\tgColour4}{\tgColour4}
        \tgBorderC{(4,5)}{2}{\tgColour0}{\tgColour4}
        \tgBorderA{(5,5)}{\tgColour0}{\tgColour8}{\tgColour8}{\tgColour0}
        \tgBorderA{(0,6)}{\tgColour6}{\tgColour0}{\tgColour0}{\tgColour6}
        \tgBorderA{(1,6)}{\tgColour0}{\tgColour4}{\tgColour4}{\tgColour0}
        \tgBlank{(2,6)}{\tgColour4}
        \tgBlank{(3,6)}{\tgColour4}
        \tgBorderA{(4,6)}{\tgColour4}{\tgColour0}{\tgColour0}{\tgColour4}
        \tgBorderA{(5,6)}{\tgColour0}{\tgColour8}{\tgColour8}{\tgColour0}
        \tgCell[(1,0)]{(3.5,1)}{\sharp'}
        \tgArrow{(4,1.5)}{3}
        \tgArrow{(5,1.5)}{3}
        \tgArrow{(5,2.5)}{3}
        \tgArrow{(3,1.5)}{1}
        \tgArrow{(5,3.5)}{3}
        \tgArrow{(4,3.5)}{3}
        \tgCell[(3,0)]{(3.5,2)}{\unit_{\aop'}}
        \tgCell[(1,0)]{(0.5,2)}{\sharp'}
        \tgArrow{(5,0.5)}{3}
        \tgArrow{(1,2.5)}{3}
        \tgArrow{(5,4.5)}{3}
        \tgArrow{(3,0.5)}{1}
        \tgArrow{(2,0.5)}{1}
        \tgArrow{(2,1.5)}{1}
        \tgArrow{(0,0.5)}{1}
        \tgArrow{(0,1.5)}{1}
        \tgArrow{(1.5,3)}{0}
        \tgArrow{(2.5,3)}{0}
        \tgArrow{(0,2.5)}{1}
        \tgArrow{(0,3.5)}{1}
        \tgArrow{(0,4.5)}{1}
        \tgCell[(5,0)]{(2.5,4)}{\unit_{\aop'}}
        \tgArrow{(1.5,5)}{0}
        \tgArrow{(3.5,5)}{0}
        \tgArrow{(2.5,5)}{0}
        \tgArrow{(4,5.5)}{3}
        \tgArrow{(1,5.5)}{1}
        \tgArrow{(0,5.5)}{1}
        \tgArrow{(5,5.5)}{3}
        \tgArrow{(3.5,3)}{0}
        \tgAxisLabel{(0.5,0.75)}{south}{f_{T'}}
        \tgAxisLabel{(2.5,0.75)}{south}{t}
        \tgAxisLabel{(3.5,0.75)}{south}{f_{T'}}
        \tgAxisLabel{(5.5,0.75)}{south}{x}
        \tgAxisLabel{(0.5,6.25)}{north}{t}
        \tgAxisLabel{(1.5,6.25)}{north}{u_{T'}}
        \tgAxisLabel{(4.5,6.25)}{north}{u_{T'}}
        \tgAxisLabel{(5.5,6.25)}{north}{x}
    \end{tangle}
    \end{tangleeqs}

    Every $(p_1, \ldots, p_n)$-graded $(T \d u_{T'})$-algebra morphism $\epsilon \colon (e_1, \aop_1') \to (e_2, \aop_2')$, is also a $(p_1, \ldots, p_n)$-graded $T'$-algebra morphism between the induced $T'$-algebras. By the universal property of $\Alg(T')$, $\epsilon$ thus induces a 2-cell $\unit_\epsilon \colon \Alg(T')(1, x_1), p_1, \ldots, p_n \tto \Alg(T')(1, x_2)$. This is a $(p_1, \ldots, p_n)$-graded $T$-algebra morphism $(x_1, \aop_1) \to (x_2, \aop_2)$ since, by postcomposing $u_{T'}$, we have:
    \begin{tangleeqs}
    \begin{tangle}{(6,5)}[trim y]
        \tgBorderA{(0,0)}{\tgColour6}{\tgColour0}{\tgColour0}{\tgColour6}
        \tgBlank{(1,0)}{\tgColour0}
        \tgBlank{(2,0)}{\tgColour0}
        \tgBorderA{(3,0)}{\tgColour0}{\tgColour8}{\tgColour8}{\tgColour0}
        \tgBorderA{(4,0)}{\tgColour8}{white}{white}{\tgColour8}
        \tgBorderA{(5,0)}{white}{\tgColour7}{\tgColour7}{white}
        \tgBorderA{(0,1)}{\tgColour6}{\tgColour0}{\tgColour4}{\tgColour6}
        \tgBorderA{(1,1)}{\tgColour0}{\tgColour0}{\tgColour4}{\tgColour4}
        \tgBorderA{(2,1)}{\tgColour0}{\tgColour0}{\tgColour0}{\tgColour4}
        \tgBorderA{(3,1)}{\tgColour0}{\tgColour8}{\tgColour8}{\tgColour0}
        \tgBorderA{(4,1)}{\tgColour8}{white}{white}{\tgColour8}
        \tgBorderA{(5,1)}{white}{\tgColour7}{\tgColour7}{white}
        \tgBorderA{(0,2)}{\tgColour6}{\tgColour4}{\tgColour0}{\tgColour6}
        \tgBorderA{(1,2)}{\tgColour4}{\tgColour4}{\tgColour0}{\tgColour0}
        \tgBorderA{(2,2)}{\tgColour4}{\tgColour0}{\tgColour0}{\tgColour0}
        \tgBorder{(2,2)}{0}{1}{0}{0}
        \tgBorderA{(3,2)}{\tgColour0}{\tgColour8}{\tgColour8}{\tgColour0}
        \tgBorder{(3,2)}{0}{0}{0}{1}
        \tgBorderA{(4,2)}{\tgColour8}{white}{white}{\tgColour8}
        \tgBorderA{(5,2)}{white}{\tgColour7}{\tgColour7}{white}
        \tgBorderA{(0,3)}{\tgColour6}{\tgColour0}{\tgColour0}{\tgColour6}
        \tgBorderC{(1,3)}{3}{\tgColour0}{\tgColour4}
        \tgBorderC{(2,3)}{2}{\tgColour0}{\tgColour4}
        \tgBorderA{(3,3)}{\tgColour0}{\tgColour8}{\tgColour7}{\tgColour0}
        \tgBorderA{(4,3)}{\tgColour8}{white}{\tgColour7}{\tgColour7}
        \tgBorderA{(5,3)}{white}{\tgColour7}{\tgColour7}{\tgColour7}
        \tgBorderA{(0,4)}{\tgColour6}{\tgColour0}{\tgColour0}{\tgColour6}
        \tgBorderA{(1,4)}{\tgColour0}{\tgColour4}{\tgColour4}{\tgColour0}
        \tgBorderA{(2,4)}{\tgColour4}{\tgColour0}{\tgColour0}{\tgColour4}
        \tgBorderA{(3,4)}{\tgColour0}{\tgColour7}{\tgColour7}{\tgColour0}
        \tgBlank{(4,4)}{\tgColour7}
        \tgBlank{(5,4)}{\tgColour7}
        \tgCell[(3,0)]{(1.5,2)}{\unit_{\aop_1'}}
        \tgArrow{(3,0.5)}{3}
        \tgArrow{(0,0.5)}{1}
        \tgArrow{(0,1.5)}{1}
        \tgArrow{(0,2.5)}{1}
        \tgArrow{(0,3.5)}{1}
        \tgArrow{(1,3.5)}{1}
        \tgArrow{(2,3.5)}{3}
        \tgArrow{(3,2.5)}{3}
        \tgArrow{(3,3.5)}{3}
        \tgArrow{(3,1.5)}{3}
        \tgArrow{(1.5,3)}{0}
        \tgCell[(2,0)]{(1,1)}{\sharp'}
        \tgArrow{(2,1.5)}{3}
        \tgCell[(2,0)]{(4,3)}{\unit_\epsilon}
        \tgAxisLabel{(0.5,0.75)}{south}{f_{T'}}
        \tgAxisLabel{(3.5,0.75)}{south}{x_1}
        \tgAxisLabel{(4.5,0.75)}{south}{p_1}
        \tgAxisLabel{(5.5,0.75)}{south}{p_n}
        \tgAxisLabel{(0.5,4.25)}{north}{t}
        \tgAxisLabel{(1.5,4.25)}{north}{u_{T'}}
        \tgAxisLabel{(2.5,4.25)}{north}{u_{T'}}
        \tgAxisLabel{(3.5,4.25)}{north}{x_2}
        \node at (5,2.9) {$\cdots$};
    \end{tangle}
    \=
    \begin{tangle}{(6,5)}[trim y]
        \tgBorderA{(0,0)}{\tgColour6}{\tgColour0}{\tgColour0}{\tgColour6}
        \tgBlank{(1,0)}{\tgColour0}
        \tgBlank{(2,0)}{\tgColour0}
        \tgBorderA{(3,0)}{\tgColour0}{\tgColour8}{\tgColour8}{\tgColour0}
        \tgBorderA{(4,0)}{\tgColour8}{white}{white}{\tgColour8}
        \tgBorderA{(5,0)}{white}{\tgColour7}{\tgColour7}{white}
        \tgBorderA{(0,1)}{\tgColour6}{\tgColour0}{\tgColour4}{\tgColour6}
        \tgBorderA{(1,1)}{\tgColour0}{\tgColour0}{\tgColour4}{\tgColour4}
        \tgBorderA{(2,1)}{\tgColour0}{\tgColour0}{\tgColour0}{\tgColour4}
        \tgBorderA{(3,1)}{\tgColour0}{\tgColour8}{\tgColour8}{\tgColour0}
        \tgBorderA{(4,1)}{\tgColour8}{white}{white}{\tgColour8}
        \tgBorderA{(5,1)}{white}{\tgColour7}{\tgColour7}{white}
        \tgBorderA{(0,2)}{\tgColour6}{\tgColour4}{\tgColour0}{\tgColour6}
        \tgBorderA{(1,2)}{\tgColour4}{\tgColour4}{\tgColour4}{\tgColour0}
        \tgBorder{(1,2)}{0}{1}{0}{0}
        \tgBorderA{(2,2)}{\tgColour4}{\tgColour0}{\tgColour0}{\tgColour4}
        \tgBorder{(2,2)}{0}{1}{0}{1}
        \tgBorderA{(3,2)}{\tgColour0}{\tgColour8}{\tgColour8}{\tgColour0}
        \tgBorder{(3,2)}{0}{0}{0}{1}
        \tgBorderA{(4,2)}{\tgColour8}{white}{white}{\tgColour8}
        \tgBorderA{(5,2)}{white}{\tgColour7}{\tgColour7}{white}
        \tgBorderA{(0,3)}{\tgColour6}{\tgColour0}{\tgColour0}{\tgColour6}
        \tgBorderA{(1,3)}{\tgColour0}{\tgColour4}{\tgColour4}{\tgColour0}
        \tgBorderA{(2,3)}{\tgColour4}{\tgColour0}{\tgColour0}{\tgColour4}
        \tgBorder{(2,3)}{0}{1}{0}{0}
        \tgBorderA{(3,3)}{\tgColour0}{\tgColour8}{\tgColour7}{\tgColour0}
        \tgBorder{(3,3)}{0}{0}{0}{1}
        \tgBorderA{(4,3)}{\tgColour8}{white}{\tgColour7}{\tgColour7}
        \tgBorderA{(5,3)}{white}{\tgColour7}{\tgColour7}{\tgColour7}
        \tgBorderA{(0,4)}{\tgColour6}{\tgColour0}{\tgColour0}{\tgColour6}
        \tgBorderA{(1,4)}{\tgColour0}{\tgColour4}{\tgColour4}{\tgColour0}
        \tgBorderA{(2,4)}{\tgColour4}{\tgColour0}{\tgColour0}{\tgColour4}
        \tgBorderA{(3,4)}{\tgColour0}{\tgColour7}{\tgColour7}{\tgColour0}
        \tgBlank{(4,4)}{\tgColour7}
        \tgBlank{(5,4)}{\tgColour7}
        \tgCell[(3,0)]{(1.5,2)}{\aop'_1}
        \tgArrow{(3,0.5)}{3}
        \tgArrow{(0,0.5)}{1}
        \tgArrow{(0,1.5)}{1}
        \tgArrow{(0,2.5)}{1}
        \tgArrow{(0,3.5)}{1}
        \tgArrow{(1,3.5)}{1}
        \tgArrow{(2,3.5)}{3}
        \tgArrow{(3,2.5)}{3}
        \tgArrow{(3,3.5)}{3}
        \tgArrow{(3,1.5)}{3}
        \tgArrow{(2,1.5)}{3}
        \tgCell[(2,0)]{(1,1)}{\sharp'}
        \tgArrow{(2,2.5)}{3}
        \tgArrow{(1,2.5)}{1}
        \tgCell[(3,0)]{(3.5,3)}{\epsilon}
        \tgAxisLabel{(0.5,0.75)}{south}{f_{T'}}
        \tgAxisLabel{(3.5,0.75)}{south}{x_1}
        \tgAxisLabel{(4.5,0.75)}{south}{p_1}
        \tgAxisLabel{(5.5,0.75)}{south}{p_n}
        \tgAxisLabel{(0.5,4.25)}{north}{t}
        \tgAxisLabel{(1.5,4.25)}{north}{u_{T'}}
        \tgAxisLabel{(2.5,4.25)}{north}{u_{T'}}
        \tgAxisLabel{(3.5,4.25)}{north}{x_2}
        \node at (5,2.9) {$\cdots$};
    \end{tangle}
    \\
    \begin{tangle}{(6,5)}[trim y]
        \tgBorderA{(0,0)}{\tgColour6}{\tgColour0}{\tgColour0}{\tgColour6}
        \tgBlank{(1,0)}{\tgColour0}
        \tgBlank{(2,0)}{\tgColour0}
        \tgBorderA{(3,0)}{\tgColour0}{\tgColour8}{\tgColour8}{\tgColour0}
        \tgBorderA{(4,0)}{\tgColour8}{white}{white}{\tgColour8}
        \tgBorderA{(5,0)}{white}{\tgColour7}{\tgColour7}{white}
        \tgBorderA{(0,1)}{\tgColour6}{\tgColour0}{\tgColour4}{\tgColour6}
        \tgBorderA{(1,1)}{\tgColour0}{\tgColour0}{\tgColour4}{\tgColour4}
        \tgBorderA{(2,1)}{\tgColour0}{\tgColour0}{\tgColour0}{\tgColour4}
        \tgBorderA{(3,1)}{\tgColour0}{\tgColour8}{\tgColour8}{\tgColour0}
        \tgBorderA{(4,1)}{\tgColour8}{white}{white}{\tgColour8}
        \tgBorderA{(5,1)}{white}{\tgColour7}{\tgColour7}{white}
        \tgBorderA{(0,2)}{\tgColour6}{\tgColour4}{\tgColour4}{\tgColour6}
        \tgBlank{(1,2)}{\tgColour4}
        \tgBorderA{(2,2)}{\tgColour4}{\tgColour0}{\tgColour0}{\tgColour4}
        \tgBorder{(2,2)}{0}{1}{0}{0}
        \tgBorderA{(3,2)}{\tgColour0}{\tgColour8}{\tgColour7}{\tgColour0}
        \tgBorder{(3,2)}{0}{0}{0}{1}
        \tgBorderA{(4,2)}{\tgColour8}{white}{\tgColour7}{\tgColour7}
        \tgBorderA{(5,2)}{white}{\tgColour7}{\tgColour7}{\tgColour7}
        \tgBorderA{(0,3)}{\tgColour6}{\tgColour4}{\tgColour0}{\tgColour6}
        \tgBorderA{(1,3)}{\tgColour4}{\tgColour4}{\tgColour4}{\tgColour0}
        \tgBorder{(1,3)}{0}{1}{0}{0}
        \tgBorderA{(2,3)}{\tgColour4}{\tgColour0}{\tgColour0}{\tgColour4}
        \tgBorder{(2,3)}{0}{1}{0}{1}
        \tgBorderA{(3,3)}{\tgColour0}{\tgColour7}{\tgColour7}{\tgColour0}
        \tgBorder{(3,3)}{0}{0}{0}{1}
        \tgBlank{(4,3)}{\tgColour7}
        \tgBlank{(5,3)}{\tgColour7}
        \tgBorderA{(0,4)}{\tgColour6}{\tgColour0}{\tgColour0}{\tgColour6}
        \tgBorderA{(1,4)}{\tgColour0}{\tgColour4}{\tgColour4}{\tgColour0}
        \tgBorderA{(2,4)}{\tgColour4}{\tgColour0}{\tgColour0}{\tgColour4}
        \tgBorderA{(3,4)}{\tgColour0}{\tgColour7}{\tgColour7}{\tgColour0}
        \tgBlank{(4,4)}{\tgColour7}
        \tgBlank{(5,4)}{\tgColour7}
        \tgArrow{(3,0.5)}{3}
        \tgArrow{(0,0.5)}{1}
        \tgArrow{(0,1.5)}{1}
        \tgArrow{(0,2.5)}{1}
        \tgArrow{(0,3.5)}{1}
        \tgArrow{(1,3.5)}{1}
        \tgArrow{(2,3.5)}{3}
        \tgArrow{(3,2.5)}{3}
        \tgArrow{(3,3.5)}{3}
        \tgArrow{(3,1.5)}{3}
        \tgArrow{(2,1.5)}{3}
        \tgCell[(2,0)]{(1,1)}{\sharp'}
        \tgCell[(3,0)]{(1.5,3)}{\aop_2'}
        \tgArrow{(2,2.5)}{3}
        \tgCell[(3,0)]{(3.5,2)}{\epsilon}
        \tgAxisLabel{(0.5,0.75)}{south}{f_{T'}}
        \tgAxisLabel{(3.5,0.75)}{south}{x_1}
        \tgAxisLabel{(4.5,0.75)}{south}{p_1}
        \tgAxisLabel{(5.5,0.75)}{south}{p_n}
        \tgAxisLabel{(0.5,4.25)}{north}{t}
        \tgAxisLabel{(1.5,4.25)}{north}{u_{T'}}
        \tgAxisLabel{(2.5,4.25)}{north}{u_{T'}}
        \tgAxisLabel{(3.5,4.25)}{north}{x_2}
        \node at (5,1.9) {$\cdots$};
    \end{tangle}
    \=
    \begin{tangle}{(6,5)}[trim y]
        \tgBorderA{(0,0)}{\tgColour6}{\tgColour0}{\tgColour0}{\tgColour6}
        \tgBlank{(1,0)}{\tgColour0}
        \tgBlank{(2,0)}{\tgColour0}
        \tgBorderA{(3,0)}{\tgColour0}{\tgColour8}{\tgColour8}{\tgColour0}
        \tgBorderA{(4,0)}{\tgColour8}{white}{white}{\tgColour8}
        \tgBorderA{(5,0)}{white}{\tgColour7}{\tgColour7}{white}
        \tgBorderA{(0,1)}{\tgColour6}{\tgColour0}{\tgColour4}{\tgColour6}
        \tgBorderA{(1,1)}{\tgColour0}{\tgColour0}{\tgColour4}{\tgColour4}
        \tgBorderA{(2,1)}{\tgColour0}{\tgColour0}{\tgColour0}{\tgColour4}
        \tgBorderA{(3,1)}{\tgColour0}{\tgColour8}{\tgColour7}{\tgColour0}
        \tgBorderA{(4,1)}{\tgColour8}{white}{\tgColour7}{\tgColour7}
        \tgBorderA{(5,1)}{white}{\tgColour7}{\tgColour7}{\tgColour7}
        \tgBorderA{(0,2)}{\tgColour6}{\tgColour4}{\tgColour0}{\tgColour6}
        \tgBorderA{(1,2)}{\tgColour4}{\tgColour4}{\tgColour0}{\tgColour0}
        \tgBorderA{(2,2)}{\tgColour4}{\tgColour0}{\tgColour0}{\tgColour0}
        \tgBorder{(2,2)}{0}{1}{0}{0}
        \tgBorderA{(3,2)}{\tgColour0}{\tgColour7}{\tgColour7}{\tgColour0}
        \tgBorder{(3,2)}{0}{0}{0}{1}
        \tgBlank{(4,2)}{\tgColour7}
        \tgBlank{(5,2)}{\tgColour7}
        \tgBorderA{(0,3)}{\tgColour6}{\tgColour0}{\tgColour0}{\tgColour6}
        \tgBorderC{(1,3)}{3}{\tgColour0}{\tgColour4}
        \tgBorderC{(2,3)}{2}{\tgColour0}{\tgColour4}
        \tgBorderA{(3,3)}{\tgColour0}{\tgColour7}{\tgColour7}{\tgColour0}
        \tgBlank{(4,3)}{\tgColour7}
        \tgBlank{(5,3)}{\tgColour7}
        \tgBorderA{(0,4)}{\tgColour6}{\tgColour0}{\tgColour0}{\tgColour6}
        \tgBorderA{(1,4)}{\tgColour0}{\tgColour4}{\tgColour4}{\tgColour0}
        \tgBorderA{(2,4)}{\tgColour4}{\tgColour0}{\tgColour0}{\tgColour4}
        \tgBorderA{(3,4)}{\tgColour0}{\tgColour7}{\tgColour7}{\tgColour0}
        \tgBlank{(4,4)}{\tgColour7}
        \tgBlank{(5,4)}{\tgColour7}
        \tgArrow{(0,1.5)}{1}
        \tgArrow{(0,2.5)}{1}
        \tgArrow{(3,1.5)}{3}
        \tgArrow{(3,2.5)}{3}
        \tgCell[(3,0)]{(1.5,2)}{\unit_{\aop_2'}}
        \tgArrow{(2,1.5)}{3}
        \tgCell[(2,0)]{(1,1)}{\sharp'}
        \tgArrow{(1.5,3)}{0}
        \tgArrow{(2,3.5)}{3}
        \tgArrow{(3,3.5)}{3}
        \tgArrow{(3,0.5)}{3}
        \tgArrow{(0,0.5)}{1}
        \tgArrow{(0,3.5)}{1}
        \tgArrow{(1,3.5)}{1}
        \tgCell[(2,0)]{(4,1)}{\unit_\epsilon}
        \tgAxisLabel{(0.5,0.75)}{south}{f_{T'}}
        \tgAxisLabel{(3.5,0.75)}{south}{x_1}
        \tgAxisLabel{(4.5,0.75)}{south}{p_1}
        \tgAxisLabel{(5.5,0.75)}{south}{p_n}
        \tgAxisLabel{(0.5,4.25)}{north}{t}
        \tgAxisLabel{(1.5,4.25)}{north}{u_{T'}}
        \tgAxisLabel{(2.5,4.25)}{north}{u_{T'}}
        \tgAxisLabel{(3.5,4.25)}{north}{x_2}
        \node at (5,0.9) {$\cdots$};
    \end{tangle}
    \end{tangleeqs}
    \eqstepref{3.1} by the definitions of $\unit_{\aop_1'}$ and $\unit_\epsilon$; \eqstepref{3.2} by the compatibility law for $\epsilon$; and \eqstepref{3.3} by the definitions of $\unit_{\aop_2'}$ and $\unit_\epsilon$.

    We must now show that these assignments are inverse to one another. Suppose we have a $T$-algebra $(x, \aop)$, inducing a $(T \d u_{T'})$-algebra $((e \d u_{T'}), \aop')$. We must show that the $T'$-algebra induced by $x$ coincides with that induced by the $(T \d u_{T'})$-algebra. By definition $(x \d u_{T'})$ is the carrier of both $T'$-algebras. Using the unit law for $\aop$, the latter $T'$-algebra structure is given by
    \begin{tangleeqs*}
    \begin{tangle}{(4,5)}[trim y]
        \tgBorderA{(0,0)}{\tgColour6}{\tgColour4}{\tgColour4}{\tgColour6}
        \tgBlank{(1,0)}{\tgColour4}
        \tgBorderA{(2,0)}{\tgColour4}{\tgColour0}{\tgColour0}{\tgColour4}
        \tgBorderA{(3,0)}{\tgColour0}{\tgColour8}{\tgColour8}{\tgColour0}
        \tgBorderA{(0,1)}{\tgColour6}{\tgColour4}{\tgColour0}{\tgColour6}
        \tgBorderA{(1,1)}{\tgColour4}{\tgColour4}{\tgColour0}{\tgColour0}
        \tgBorderA{(2,1)}{\tgColour4}{\tgColour0}{\tgColour0}{\tgColour0}
        \tgBorderA{(3,1)}{\tgColour0}{\tgColour8}{\tgColour8}{\tgColour0}
        \tgBorderA{(0,2)}{\tgColour6}{\tgColour0}{\tgColour0}{\tgColour6}
        \tgBorder{(0,2)}{0}{1}{0}{0}
        \tgBorderA{(1,2)}{\tgColour0}{\tgColour0}{\tgColour0}{\tgColour0}
        \tgBorder{(1,2)}{0}{1}{0}{1}
        \tgBorderA{(2,2)}{\tgColour0}{\tgColour0}{\tgColour0}{\tgColour0}
        \tgBorder{(2,2)}{0}{1}{0}{1}
        \tgBorderA{(3,2)}{\tgColour0}{\tgColour8}{\tgColour8}{\tgColour0}
        \tgBorder{(3,2)}{0}{0}{0}{1}
        \tgBorderA{(0,3)}{\tgColour6}{\tgColour0}{\tgColour0}{\tgColour6}
        \tgBorderC{(1,3)}{3}{\tgColour0}{\tgColour4}
        \tgBorderC{(2,3)}{2}{\tgColour0}{\tgColour4}
        \tgBorderA{(3,3)}{\tgColour0}{\tgColour8}{\tgColour8}{\tgColour0}
        \tgBorderA{(0,4)}{\tgColour6}{\tgColour0}{\tgColour0}{\tgColour6}
        \tgBorderA{(1,4)}{\tgColour0}{\tgColour4}{\tgColour4}{\tgColour0}
        \tgBorderA{(2,4)}{\tgColour4}{\tgColour0}{\tgColour0}{\tgColour4}
        \tgBorderA{(3,4)}{\tgColour0}{\tgColour8}{\tgColour8}{\tgColour0}
        \tgCell[(2,0)]{(1,1)}{\flat'}
        \tgCell[(3,0)]{(1.5,2)}{\aop}
        \tgCell{(0,3)}{\eta}
        \tgArrow{(3,0.5)}{3}
        \tgArrow{(3,1.5)}{3}
        \tgArrow{(3,2.5)}{3}
        \tgArrow{(3,3.5)}{3}
        \tgArrow{(1.5,3)}{0}
        \tgArrow{(2,3.5)}{3}
        \tgArrow{(1,3.5)}{1}
        \tgArrow{(0,1.5)}{1}
        \tgArrow{(0,2.5)}{1}
        \tgArrow{(0,3.5)}{1}
        \tgArrow{(0,0.5)}{1}
        \tgArrow{(2,0.5)}{3}
        \tgAxisLabel{(0.5,0.75)}{south}{j}
        \tgAxisLabel{(2.5,0.75)}{south}{u_{T'}}
        \tgAxisLabel{(3.5,0.75)}{south}{x}
        \tgAxisLabel{(0.5,4.25)}{north}{f_{T'}}
        \tgAxisLabel{(1.5,4.25)}{north}{u_{T'}}
        \tgAxisLabel{(2.5,4.25)}{north}{u_{T'}}
        \tgAxisLabel{(3.5,4.25)}{north}{x}
    \end{tangle}
    \=
    \begin{tangle}{(4,5)}[trim y]
        \tgBorderA{(0,0)}{\tgColour6}{\tgColour4}{\tgColour4}{\tgColour6}
        \tgBlank{(1,0)}{\tgColour4}
        \tgBorderA{(2,0)}{\tgColour4}{\tgColour0}{\tgColour0}{\tgColour4}
        \tgBorderA{(3,0)}{\tgColour0}{\tgColour8}{\tgColour8}{\tgColour0}
        \tgBorderA{(0,1)}{\tgColour6}{\tgColour4}{\tgColour0}{\tgColour6}
        \tgBorderA{(1,1)}{\tgColour4}{\tgColour4}{\tgColour0}{\tgColour0}
        \tgBorderA{(2,1)}{\tgColour4}{\tgColour0}{\tgColour0}{\tgColour0}
        \tgBorderA{(3,1)}{\tgColour0}{\tgColour8}{\tgColour8}{\tgColour0}
        \tgBorderA{(0,2)}{\tgColour6}{\tgColour0}{\tgColour0}{\tgColour6}
        \tgBlank{(1,2)}{\tgColour0}
        \tgBlank{(2,2)}{\tgColour0}
        \tgBorderA{(3,2)}{\tgColour0}{\tgColour8}{\tgColour8}{\tgColour0}
        \tgBorderA{(0,3)}{\tgColour6}{\tgColour0}{\tgColour0}{\tgColour6}
        \tgBorderC{(1,3)}{3}{\tgColour0}{\tgColour4}
        \tgBorderC{(2,3)}{2}{\tgColour0}{\tgColour4}
        \tgBorderA{(3,3)}{\tgColour0}{\tgColour8}{\tgColour8}{\tgColour0}
        \tgBorderA{(0,4)}{\tgColour6}{\tgColour0}{\tgColour0}{\tgColour6}
        \tgBorderA{(1,4)}{\tgColour0}{\tgColour4}{\tgColour4}{\tgColour0}
        \tgBorderA{(2,4)}{\tgColour4}{\tgColour0}{\tgColour0}{\tgColour4}
        \tgBorderA{(3,4)}{\tgColour0}{\tgColour8}{\tgColour8}{\tgColour0}
        \tgCell[(2,0)]{(1,1)}{\flat'}
        \tgArrow{(3,0.5)}{3}
        \tgArrow{(3,1.5)}{3}
        \tgArrow{(3,2.5)}{3}
        \tgArrow{(3,3.5)}{3}
        \tgArrow{(1.5,3)}{0}
        \tgArrow{(2,3.5)}{3}
        \tgArrow{(1,3.5)}{1}
        \tgArrow{(0,1.5)}{1}
        \tgArrow{(0,2.5)}{1}
        \tgArrow{(0,3.5)}{1}
        \tgArrow{(0,0.5)}{1}
        \tgArrow{(2,0.5)}{3}
        \tgAxisLabel{(0.5,0.75)}{south}{j}
        \tgAxisLabel{(2.5,0.75)}{south}{u_{T'}}
        \tgAxisLabel{(3.5,0.75)}{south}{x}
        \tgAxisLabel{(0.5,4.25)}{north}{f_{T'}}
        \tgAxisLabel{(1.5,4.25)}{north}{u_{T'}}
        \tgAxisLabel{(2.5,4.25)}{north}{u_{T'}}
        \tgAxisLabel{(3.5,4.25)}{north}{x}
    \end{tangle}
    \end{tangleeqs*}
    which is the former $T'$-algebra structure.
    That the $T$-algebra structure induced by $\aop'$ coincides with $\aop$ follows from the universal property of $\Alg(T')$ with respect to graded algebra morphisms.

    Suppose we have a $(p_1, \ldots, p_n)$-graded $T$-algebra morphism $\chi \colon (x_1, \aop_1) \to (x_2, \aop_2)$. The $T'$-algebra morphism induced directly by $\chi$, and the $T'$-algebra morphism induced from the $(T \d u_{T'})$-algebra morphism are both given by $(\chi \d u_{T'})$ by definition. Thus the universal property of $\Alg(T')$ with respect to graded algebra morphisms implies that the induced $T$-algebra morphism is $\chi$.

    The other direction follows immediately from the universal property of $\Alg(T')$ (with respect both to algebras and graded algebra morphisms).
\end{proof}

\begin{definition}
    A relative adjunction $\ljr$ inducing a relative monad $T$ is \emph{strictly $j$-relatively monadic} (alternatively \emph{strictly monadic relative to $j$}, or simply \emph{strictly $j$-monadic}) if $T$ admits an algebra object and $\ljr$ is isomorphic to $f_T \jadj u_T$ in the category of resolutions of $T$.
\end{definition}

Note that a tight-cell $r$ is strictly $j$-monadic if and only if it admits a left $j$-adjoint and the induced $j$-adjunction is strictly $j$-monadic, since both properties exhibit $r$ as being the right $j$-adjoint of a terminal resolution.

It follows from \cref{pasting-law-algebras} that the pasting law for relative adjunctions (\cref{relative-adjunction-pasting}) respects relative monadicity.

\begin{lemma}
    \label{monadic-relative-adjunction-pasting}
    Consider the following diagram.
    \[\begin{tikzcd}[sep=small]
        && C \\
        &&& D \\
        A &&&& E
        \arrow["r", from=1-3, to=2-4]
        \arrow["j"', from=3-1, to=3-5]
        \arrow[""{name=0, anchor=center, inner sep=0}, "{r'}", from=2-4, to=3-5]
        \arrow[""{name=1, anchor=center, inner sep=0}, "{\ell'}"{description}, from=3-1, to=2-4]
        \arrow["\ell", from=3-1, to=1-3]
        \arrow["\dashv"{anchor=center}, shift right=1, draw=none, from=1, to=0]
    \end{tikzcd}\]
    Suppose that $\ell' \jadj r'$ is strictly $j$-monadic. Then the left triangle exhibits a strictly $\ell'$-monadic relative adjunction if and only if the outer triangle exhibits a strictly $j$-monadic relative adjunction.
\end{lemma}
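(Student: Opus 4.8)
The plan is to reduce to the situation of \cref{pasting-law-algebras} and then transport the algebra objects and comparison tight-cells of the left and outer triangles across the bijection it supplies. First, since $\ell' \jadj r'$ is strictly $j$-monadic, there is an isomorphism $D \iso \Alg(T')$ of resolutions of $T'$ carrying $\ell'$ to $f_{T'}$ and $r'$ to $u_{T'}$. Both conditions in the statement are invariant under transporting $D$ along such an isomorphism (the outer composite $r \d r'$ is unchanged), so I may assume $D = \Alg(T')$, $\ell' = f_{T'}$, and $r' = u_{T'}$. Write $T$ for the $f_{T'}$-monad induced by the left triangle $\ell \radj{f_{T'}} r$, with carrier $t = (\ell \d r)$. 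By \cref{relative-adjunction-pasting} the outer triangle is a $j$-relative adjunction $\ell \jadj (r \d u_{T'})$, and its induced $j$-monad has carrier $(t \d u_{T'})$; this is exactly the composite $j$-monad $(T \d u_{T'})$, which I abbreviate $S$.

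Next I would invoke \cref{pasting-law-algebras} to obtain the bijection $\Phi$, given by postcomposition with $u_{T'}$, between $T$-algebras and $S$-algebras, compatible with graded morphisms. Since an algebra object (\cref{algebra-object}) is precisely a representing object for the assignment of algebras and graded algebra morphisms, and $\Phi$ identifies this assignment for $T$ with that for $S$ over $u_{T'}$, the representing objects correspond: $T$ admits an algebra object if and only if $S$ does. Concretely, when $\Alg(T)$ exists, $\Phi$ equips $(u_T \d u_{T'})$ with an $S$-algebra structure satisfying the universal property of \cref{algebra-object}, exhibiting $\Alg(T)$ as an algebra object for $S$; conversely $\Phi\inv$ lifts the $S$-algebra structure of $u_S$ to a $T$-algebra structure exhibiting $\Alg(S)$ as an algebra object for $T$. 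Uniqueness of algebra objects then yields an isomorphism $\Psi \colon \Alg(T) \iso \Alg(S)$ with $\Psi \d u_S = (u_T \d u_{T'})$. Moreover, applying \cref{relative-adjunction-pasting} to the canonical resolution $f_T \radj{f_{T'}} u_T$ shows that $f_T \jadj (u_T \d u_{T'})$ is a resolution of $S$ with free cell $f_T$, whence $f_T \d \Psi = f_S$.

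Finally I would compare the comparison tight-cells. Denote by $\unit_L \colon C \to \Alg(T)$ and $\unit_O \colon C \to \Alg(S)$ the comparison tight-cells of the left and outer resolutions, each characterised by commutativity with the left and right relative adjoints. Using $\ell \d \unit_L = f_T$ and $\unit_L \d u_T = r$, together with $\Psi \d u_S = (u_T \d u_{T'})$ and $f_T \d \Psi = f_S$, one checks that $(\unit_L \d \Psi)$ satisfies $\ell \d (\unit_L \d \Psi) = f_S$ and $(\unit_L \d \Psi) \d u_S = (r \d u_{T'})$, which are exactly the defining equations of $\unit_O$; by uniqueness $\unit_O = (\unit_L \d \Psi)$. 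As $\Psi$ is invertible, $\unit_L$ is an isomorphism if and only if $\unit_O$ is. Combining this with the equivalence of algebra-object existence established above gives the claimed biconditional, since each triangle is strictly monadic exactly when its induced monad admits an algebra object and its comparison tight-cell is invertible.

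The main obstacle is establishing that the bijection $\Phi$ of \cref{pasting-law-algebras} is genuinely natural in the domain, that is, compatible with restriction along tight-cells and with the resolution data (free and forgetful cells). It is this naturality that licenses transporting the \emph{full} universal property of algebra objects across $\Phi$ in the second paragraph, and that makes the identification $\unit_O = (\unit_L \d \Psi)$ in the third paragraph a formality rather than a separate verification. Once this compatibility is confirmed from the construction of $\Phi$ in the proof of \cref{pasting-law-algebras}, the remaining steps are routine.
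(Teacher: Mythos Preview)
Your proposal is correct and follows essentially the same approach as the paper, which simply cites \cref{relative-adjunction-pasting,pasting-law-algebras} and observes that the latter exhibits the universal properties of $\Alg(T)$ and $\Alg(T \d r')$ as equivalent. Your stated obstacle is not genuine: the bijection of \cref{pasting-law-algebras} is given explicitly by postcomposition with $u_{T'}$, which manifestly commutes with precomposition by tight-cells and with composition of 2-cells, so the compatibility you need to transport the universal property and to identify the comparison tight-cells is immediate from the form of the bijection rather than requiring a separate verification.
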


\begin{proof}
    Follows directly from \cref{relative-adjunction-pasting,pasting-law-algebras}, the latter exhibiting the universal properties of $\Alg(T)$ and $\Alg(T \d r')$ as being equivalent.
\end{proof}

We rephrase \cref{monadic-relative-adjunction-pasting} to concern only the tight-cells $r$ and $r'$, rather than relative adjunctions, which gives necessary and sufficient conditions for the composite $(r \d r')$ to be relatively monadic.

\begin{theorem}[Relative monadicity of composites]
    \label{relative-monadicity-of-composite}
    Let $\jAE$ and $r' \colon D \to E$ be tight-cells, and assume that $r'$ is (non)strictly $j$-monadic, with left $j$-adjoint $\ell'$.
    A tight-cell $r \colon C \to D$ is (non)strictly $\ell'$-monadic if and only if $(r \d r') \colon C \to E$ is (non)strictly $j$-monadic.
\end{theorem}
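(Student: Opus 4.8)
The plan is to deduce the theorem directly from \cref{monadic-relative-adjunction-pasting}, by translating between monadic \emph{tight-cells} and monadic \emph{relative adjunctions} using the observation recorded just above the theorem: namely, that a tight-cell is strictly (respectively, non-strictly) monadic relative to a given root exactly when it admits a left relative adjoint for that root whose induced relative adjunction is strictly (respectively, non-strictly) monadic. The first step is to note that this observation, applied to $r'$, already supplies the standing hypothesis of \cref{monadic-relative-adjunction-pasting}: since $r'$ is $j$-monadic with left $j$-adjoint $\ell'$, the induced relative adjunction $\ell' \jadj r'$ is strictly $j$-monadic.

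For the forward implication, suppose $r$ is $\ell'$-monadic. By the observation, $r$ admits a left $\ell'$-adjoint $\ell$, and the resulting relative adjunction $\ell \radj{\ell'} r$ on the left triangle is strictly $\ell'$-monadic. \Cref{relative-adjunction-pasting} then promotes this to a $j$-adjunction $\ell \jadj (r \d r')$ on the outer triangle, so that $(r \d r')$ admits a left $j$-adjoint; and \cref{monadic-relative-adjunction-pasting} transfers strict $\ell'$-monadicity of the left triangle to strict $j$-monadicity of the outer triangle. A final appeal to the observation shows $(r \d r')$ is $j$-monadic. The converse is symmetric: from $j$-monadicity of $(r \d r')$, the observation yields a strictly $j$-monadic outer triangle $\ell \jadj (r \d r')$; \cref{relative-adjunction-pasting} extracts the left triangle as a relative adjunction $\ell \radj{\ell'} r$; \cref{monadic-relative-adjunction-pasting} feeds strict $j$-monadicity of the outer triangle back to strict $\ell'$-monadicity of the left triangle; and the observation concludes that $r$ is $\ell'$-monadic.

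The non-strict case runs along identical lines, with each use of the observation and of \cref{monadic-relative-adjunction-pasting} replaced by its non-strict analogue, in which isomorphisms of resolutions are relaxed to equivalences; \cref{relative-adjunction-pasting} is insensitive to strictness and so requires no modification. The only point demanding genuine care — and the reason the theorem is more than a notational restatement of \cref{monadic-relative-adjunction-pasting} — is that a single tight-cell $\ell$ must simultaneously serve as the left $\ell'$-adjoint of $r$ and the left $j$-adjoint of $(r \d r')$. This compatibility is precisely what \cref{relative-adjunction-pasting} provides, by identifying the relative-adjunction structures borne by the left and outer triangles; granting it, the transfer of monadicity is immediate.
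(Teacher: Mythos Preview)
Your treatment of the strict case is correct and essentially identical to the paper's: both arguments note that strict $j$-monadicity of $r'$ yields a strictly $j$-monadic adjunction $\ell' \jadj r'$, then invoke \cref{relative-adjunction-pasting} to match up left relative adjoints and \cref{monadic-relative-adjunction-pasting} to transfer monadicity between the left and outer triangles.

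The non-strict case, however, has a gap. You appeal to a ``non-strict analogue'' of \cref{monadic-relative-adjunction-pasting}, but no such analogue is stated or proved in the paper: \cref{monadic-relative-adjunction-pasting} requires the right-hand triangle $\ell' \jadj r'$ to be \emph{strictly} $j$-monadic, and its proof via \cref{pasting-law-algebras} genuinely uses that $D = \Alg(T')$ on the nose, so that the universal property of the algebra object can be invoked directly. When $r'$ is only non-strictly $j$-monadic, this hypothesis fails. The paper handles this not by proving a non-strict version of the lemma, but by a reduction: since $r'$ is non-strictly $j$-monadic there is an equivalence $e \colon D \equiv \Alg(T')$ over $E$ and under $A$, and one checks that non-strict $\ell'$-monadicity of $r$ is the same as non-strict $f_{T'}$-monadicity of $(r \d e)$, while non-strict $j$-monadicity of $(r \d r')$ is the same as non-strict $j$-monadicity of $(r \d e \d u_{T'})$. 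This replaces the right-hand triangle by the strictly monadic $f_{T'} \jadj u_{T'}$, at which point \cref{monadic-relative-adjunction-pasting} applies as stated; the residual equivalences are absorbed by the non-strictness of the conclusion. Your sketch would become correct if you either carried out this reduction explicitly, or formulated and proved the non-strict analogue of \cref{monadic-relative-adjunction-pasting} you invoke --- but the latter is additional work, and the former is what the paper does.
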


\begin{proof}
    Let $T'$ be the $j$-monad induced by $\ell' \jadj r'$.
    Without loss of generality, we may assume $\ell' \jadj r'$ is strictly $j$-monadic: the following diagram commutes, so that non-strict $\ell'$-monadicity of $r$ is equivalently non-strict $f_{T'}$-monadicity of $(r \d {\equiv})$, and non-strict $j$-monadicity of $(r \d r')$ is equivalently non-strict $j$-monadicity of $(r \d {\equiv} \d u_{T'})$.
\[\begin{tikzcd}
	& D \\
	& {\Alg(T')} \\
	A && E
	\arrow["j"', from=3-1, to=3-3]
	\arrow[""{name=0, anchor=center, inner sep=0}, "{u_{T'}}"{description}, from=2-2, to=3-3]
	\arrow[""{name=1, anchor=center, inner sep=0}, "{f_{T'}}"{description}, from=3-1, to=2-2]
	\arrow["\equiv"{description}, from=1-2, to=2-2]
	\arrow["{r'}", curve={height=-12pt}, from=1-2, to=3-3]
	\arrow["{\ell'}", curve={height=-12pt}, from=3-1, to=1-2]
	\arrow["\dashv"{anchor=center}, shift right=1, draw=none, from=1, to=0]
\end{tikzcd}\]
    The strict case then follows directly from \cref{monadic-relative-adjunction-pasting}, considering the following triangle, where $\ell \colon A \to C$ is the left relative adjoint of either $r$ or of $(r \d u_{T'})$.
\[\begin{tikzcd}[sep=small]
	&& C \\
	&&& {\Alg(T')} \\
	A &&&& E
	\arrow["r", from=1-3, to=2-4]
	\arrow["j"', from=3-1, to=3-5]
	\arrow[""{name=0, anchor=center, inner sep=0}, "{u_{T'}}", from=2-4, to=3-5]
	\arrow[""{name=1, anchor=center, inner sep=0}, "{f_{T'}}"{description}, from=3-1, to=2-4]
	\arrow["\ell", from=3-1, to=1-3]
	\arrow["\dashv"{anchor=center}, draw=none, from=1, to=0]
\end{tikzcd}\]
    The non-strict case follows therefrom by precomposing $r$ by equivalences.
\end{proof}

\Cref{relative-monadicity-of-composite} may be seen as a significant generalisation of classical sufficiency results for the monadicity of a composite $(r \d r')$ functor (\eg{}~\cite[Proposition~3.5.1]{barr1985toposes}). When $r$ is \ff{}, it may alternatively be seen as a weakening of \citeauthor{birkhoff1935structure}'s \emph{HSP theorem}~\cite[Theorem~10]{birkhoff1935structure}, giving necessary and sufficient conditions for a subcategory of a category of algebras to itself be a category of algebras (the HSP theorem further asks that, in this case, the induced monad morphism be a regular epimorphism, which expresses that the subcategory is formed by imposing additional axioms on the algebras).\footnotemark{}
\footnotetext{While we expect that the HSP theorem may be derived from \cref{relative-monadicity-of-composite}, we shall not do so here.}

To conclude, we mention two useful consequences of \cref{relative-monadicity-of-composite}. The first is the following classical cancellability result for monadic adjunctions (\cf{}~\cite[Proposition~5]{bourn1992low}).

\begin{corollary}[Cancellability]
    Consider the following situation, in which the monad induced by $\ell \adj r$ admits an algebra object.
    \[\begin{tikzcd}
    	C & D & E
    	\arrow[""{name=0, anchor=center, inner sep=0}, "r", shift left=2, from=1-1, to=1-2]
    	\arrow[""{name=1, anchor=center, inner sep=0}, "{r'}", shift left=2, from=1-2, to=1-3]
    	\arrow[""{name=2, anchor=center, inner sep=0}, "{\ell'}", shift left=2, from=1-3, to=1-2]
    	\arrow[""{name=3, anchor=center, inner sep=0}, "\ell", shift left=2, from=1-2, to=1-1]
    	\arrow["\dashv"{anchor=center, rotate=90}, draw=none, from=3, to=0]
    	\arrow["\dashv"{anchor=center, rotate=90}, draw=none, from=2, to=1]
    \end{tikzcd}\]
    If $r'$ and $(r \d r')$ are (non)strictly monadic, then so is $r$.
\end{corollary}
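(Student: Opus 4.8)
The plan is to apply the pasting law for monadicity \cref{relative-monadicity-of-composite} once, in order to descend from the composite $(r \d r')$ to $r$, and then to apply \cref{monadic-iff-left-adjoint}(\hyperref[monadic-iff-left-adjoint']{$'$}) to reinterpret the resulting relative monadicity over the correct root. First I would fix the roots: the adjunction $\ell' \adj r'$ exhibits $r' \colon D \to E$ as (non)strictly $1_E$-monadic with left $1_E$-adjoint $\ell' \colon E \to D$, while the adjunction $\ell \adj r$ exhibits $\ell \colon D \to C$ as a left $1_D$-adjoint of $r \colon C \to D$, whose induced $1_D$-monad is the monad on $D$ that, by hypothesis, admits an algebra object.

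The first step is to apply \cref{relative-monadicity-of-composite} with root $j \defeq 1_E$. Since $(r \d r')$ is assumed to be (non)strictly monadic --- that is, (non)strictly $1_E$-monadic --- the theorem yields that $r$ is (non)strictly $\ell'$-monadic. It is worth emphasising that this step requires no density hypothesis on $1_E$ or $\ell'$, which is precisely why \cref{relative-monadicity-of-composite} rather than \cref{relative-monadicity-theorem} is the appropriate tool here.

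It then remains to upgrade $\ell'$-monadicity of $r$ to $1_D$-monadicity. For this I would use that the identity tight-cell $1_D$ is dense (as is every identity) and factor the root of the induced relative monad as $\ell' = \ell' \d 1_D$. Applying \cref{monadic-iff-left-adjoint}(\hyperref[monadic-iff-left-adjoint']{$'$}) with $j \defeq \ell'$ and $j' \defeq 1_D$, it suffices to verify that $r$ admits a left $1_D$-adjoint and that the induced $1_D$-monad admits an algebra object: the former is witnessed by $\ell$, since $\ell \adj r$, and the latter is exactly the standing hypothesis that the monad induced by $\ell \adj r$ admits an algebra object. Hence $r$ is (non)strictly $1_D$-monadic, \ie (non)strictly monadic, as required.

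I expect the only real subtlety to lie in the strict case. There \cref{monadic-iff-left-adjoint} is phrased in terms of the forgetful tight-cell $u_T$ of the $\ell'$-monad $T$ produced in the first step, so I would note that strict $\ell'$-monadicity of $r$ means precisely that $r \iso u_T$ in $\tX/D$, that both the left $1_D$-adjoint and the induced $1_D$-monad transport along this isomorphism, and consequently that strict $1_D$-monadicity of $u_T$ descends to $r$. In the non-strict case \cref{monadic-iff-left-adjoint'} applies verbatim to the tight-cell $r$, so no such transfer is required.
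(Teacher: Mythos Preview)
Your proof is correct and follows essentially the same approach as the paper: apply \cref{relative-monadicity-of-composite} with $j = 1_E$ to deduce that $r$ is (non)strictly $\ell'$-monadic, then invoke \cref{monadic-iff-left-adjoint}(\hyperref[monadic-iff-left-adjoint']{$'$}) with $j' = 1_D$ to conclude $1_D$-monadicity. Your explicit treatment of the strict-case transfer along the isomorphism $r \iso u_T$ in $\tX/D$ is a welcome clarification that the paper leaves implicit.
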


\begin{proof}
    From \cref{relative-monadicity-of-composite}, we have that $r'$ is (non)strictly $\ell'$-monadic. The result then follows from \cref{monadic-iff-left-adjoint}(\hyperref[monadic-iff-left-adjoint']{$'$}), since the monad induced by $\ell \adj r$ admits an algebra object.
\end{proof}

The second establishes that algebraic tight-cells -- that is, those tight-cells between algebra objects that are induced by relative monad morphisms -- create limits and certain colimits, and are themselves monadic as soon as they admit left adjoints.

\begin{corollary}
    \label{algebraic-tight-cells-create-limits-and-j-absolute-colimits}
    Let $\jAE$ be a tight-cell, and consider a commutative triangle as follows, where $r$ and $r'$ are (non)strictly $j$-monadic.
    \[\begin{tikzcd}
        {D'} && D \\
        & E
        \arrow["i", from=1-1, to=1-3]
        \arrow["{r'}"', from=1-1, to=2-2]
        \arrow["r", from=1-3, to=2-2]
    \end{tikzcd}\]
    Then:
    \begin{enumerate}
      \item $i$ (non)strictly creates limits and those colimits in $D$ that are sent by $r$ to $j$-absolute colimits.
      \item $i$ is (non)strictly monadic if and only if it admits a left adjoint.
    \end{enumerate}
\end{corollary}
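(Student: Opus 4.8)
The plan is to deduce both parts from the relative monadicity theorems already established, by first recognising $i$ as a relatively monadic tight-cell over $D$ whose root is the left $j$-adjoint of $r$. Concretely, I would apply \cref{relative-monadicity-of-composite}: writing $\ell \colon A \to D$ for the left $j$-adjoint of the (non)strictly $j$-monadic tight-cell $r$, and taking the tight-cell to be tested to be $i \colon D' \to D$, commutativity of the triangle gives $(i \d r) = r'$, which is (non)strictly $j$-monadic by hypothesis. Hence \cref{relative-monadicity-of-composite} yields that $i$ is (non)strictly $\ell$-monadic. This exhibits $i$, up to isomorphism (resp. equivalence) over $D$, as the forgetful tight-cell $u_T \colon \Alg(T) \to D$ of the $\ell$-monad $T$ induced by $i$ together with its left $\ell$-adjoint, and is the structural heart of the argument; what remains is to translate the conclusions of the monadicity theorem back into statements about $i$ and $r$.

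For part (1), since $i$ is the composite of an isomorphism (resp. equivalence) with $u_T$, \cref{u_T-creates-limits,u_T-creates-j-absolute-colimits} (resp. \cref{u_T-nonstrict-creation}) show that $i$ (non)strictly creates all limits and all $\ell$-absolute colimits. It then remains to identify the $\ell$-absolute colimits in $D$ with those sent by $r$ to $j$-absolute colimits. For this I would use the transposition isomorphism $D(\ell, 1) \iso E(j, r)$ of the $j$-adjunction $\ell \jadj r$: restricting along a diagram $f$ gives a natural isomorphism $D(\ell, f) \iso E(j, f \d r)$, so that the nerve-type loose-cell $D(\ell, {-})$ factors as $r$ followed by $E(j, {-})$. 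Consequently, for a colimit $p \wc f$ in $D$ that $r$ preserves and sends to a $j$-absolute colimit $p \wc (f \d r)$, the comparison 2-cell $D(\ell, f) \odotl p \tto D(\ell, p \wc f)$ defining $\ell$-absoluteness corresponds, under this isomorphism, to the comparison 2-cell witnessing $j$-absoluteness of $p \wc (f \d r)$; hence such a colimit is $\ell$-absolute, and so is created by $i$.

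For part (2), note that $i$ is $\ell$-monadic, that is, $(\ell \d 1_D)$-monadic. Since the identity tight-cell $1_D$ is dense, \cref{monadic-iff-left-adjoint}(\hyperref[monadic-iff-left-adjoint']{$'$}), applied with roots $\ell$ and $1_D$, shows that $i$ is (non)strictly monadic if and only if it admits a left adjoint (the existence of an algebra object for the induced monad being part of the assertion of monadicity, exactly as in the cancellability corollary above). The forward implication is immediate from the definition of monadicity.

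I expect the main obstacle to be the identification in part (1): verifying that the transposition isomorphism carries the canonical comparison 2-cell for $\ell$-absoluteness to the one for $j$-absoluteness is a naturality computation that must be carried out with care. Only the direction actually needed — that a colimit sent by $r$ to a $j$-absolute colimit is $\ell$-absolute — is required, and it uses no density hypothesis; the converse would instead require density of $j$ via \cite[Lemma~3.23]{arkor2024formal}, which is why the corollary is phrased in terms of colimits sent by $r$ rather than in terms of $\ell$-absoluteness directly.
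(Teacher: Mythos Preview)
Your proposal is correct and follows essentially the same route as the paper: you first deduce that $i$ is (non)strictly $\ell$-monadic via \cref{relative-monadicity-of-composite}, then for (1) use the adjunction isomorphism $D(\ell,{-}) \iso E(j,r{-})$ to show that colimits sent by $r$ to $j$-absolute colimits are $\ell$-absolute (the paper writes this out as an explicit four-step chain of isomorphisms, but it is the same argument), and for (2) invoke \cref{monadic-iff-left-adjoint}(\hyperref[monadic-iff-left-adjoint']{$'$}) with $j' = 1_D$. Your remark that only one direction of the absoluteness identification is needed, and that the converse would require density of $j$, is a correct and useful observation that the paper does not make explicit.
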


\begin{proof}
    Denote by $\ell$ the left $j$-adjoint of $r$.
    By \cref{relative-monadicity-of-composite}, $i$ is (non)strictly $\ell$-monadic.

    For (1), $i$ (non)strictly creates limits and $\ell$-absolute colimits by \cref{u_T-creates-j-absolute-colimits,u_T-creates-limits,u_T-nonstrict-creation}. In particular, the (non)strict lifting of any $j$-absolute colimit through $r$ is $\ell$-absolute: for each loose-cell $p \colon Y \lto Z$ and tight-cell $f \colon Z \to D$ admitting a $j$-absolute colimit $p \wc (f \d r)$, we have
    \begin{align*}
        D(\ell, f) \odotl p & \iso E(j, r f) \odotl p \tag{$\ell \jadj r$} \\
            & \iso E(j, p \wc (f \d r)) \tag{$p \wc (f \d r)$ is $j$-absolute} \\
            & \iso E(j, r (p \wc f)) \tag{\cref{u_T-creates-j-absolute-colimits,u_T-nonstrict-creation}} \\
            & \iso D(\ell, p \wc f) \tag{$\ell \jadj r$}
    \end{align*}
    with canonicity of this isomorphism following from that for $j$-absoluteness.
    (Alternatively, we could establish (1) as a consequence of \cref{u_T-creates-j-absolute-colimits,u_T-creates-limits,u_T-nonstrict-creation} together with simple observations about the creation of limits and colimits, but find the approach above to be particularly satisfying.)

    (2) follows immediately from $\ell$-monadicity of $i$ by \cref{monadic-iff-left-adjoint}(\hyperref[monadic-iff-left-adjoint']{$'$}).
\end{proof}

The preservation of limits and certain colimits in \cref{algebraic-tight-cells-create-limits-and-j-absolute-colimits} is often sufficient, via an adjointness theorem, to imply that, for sufficiently nice roots $j$, every algebraic tight-cell admits a right adjoint, and hence is monadic.
We end with an example of this situation.

\begin{example}
     From \cref{algebraic-theories-induce-monads}, we have that the category of algebras $\Cart[L\op, \Set]$ of a finitary algebraic theory $\ell \colon \F \to L$ is $(j \colon \F \ffto \Set)$-monadic. Since $u_\ell$ and $n_j$ preserve sifted colimits, sifted colimits in $\Cart[L\op, \Set]$ are lifted non-strictly by $u_\ell$ via \cref{j-absolute-is-preservation-by-n_j,u_T-nonstrict-creation}. Consequently, every concrete functor $i$ between categories of algebras for finitary algebraic theories preserves limits and sifted colimits.
\[\begin{tikzcd}
	{\Cart[L'\op, \Set]} && {\Cart[L\op, \Set]} \\
	& \Set
	\arrow[""{name=0, anchor=center, inner sep=0}, "i"', shift right=2, from=1-1, to=1-3]
	\arrow["{u_{\ell'}}"', from=1-1, to=2-2]
	\arrow["{u_\ell}", from=1-3, to=2-2]
	\arrow[""{name=1, anchor=center, inner sep=0}, shift right=2, dashed, from=1-3, to=1-1]
	\arrow["\dashv"{anchor=center, rotate=-90}, draw=none, from=1, to=0]
\end{tikzcd}\]
    Since categories of algebras for algebraic theories are locally strongly finitely presentable, and every continuous and sifted-cocontinuous functor therebetween is a right adjoint, every such $i$ is a right adjoint, and hence is non-strictly monadic (\cf{}~\cites[Theorem~IV.2.1]{lawvere1963functorial}[Corollary~1.5.2]{manes1967triple}).
\end{example}

\printbibliography

\end{document}